%% file: Paper.tex
\documentclass[12pt,twoside]{article}

	\usepackage{fancyhdr} 
	\usepackage[margin=1in]{geometry}  
	\usepackage{graphicx}              
	\usepackage{amsmath}               
	\usepackage{amsfonts}              
	\usepackage{amsthm}                
	\usepackage{enumitem}
	\usepackage{amssymb}
	\usepackage{mathtools}
	\usepackage{hyperref}
	\usepackage{listings}
	\usepackage{xcolor}
	\usepackage{booktabs}
	\ExplSyntaxOn
	\cs_new_eq:NN \intfromalph \int_from_alph:n
	\ExplSyntaxOff

	\newtheorem{thm}{Theorem}[section]
	\newtheorem{lem}[thm]{Lemma}
	\newtheorem{prop}[thm]{Proposition}
	\newtheorem{cor}[thm]{Corollary}
	
	\theoremstyle{remark}
	\newtheorem{rem}[thm]{Remark}
	
	\theoremstyle{definition}
	\newtheorem{Def}[thm]{Definition}
	\theoremstyle{plain}
	\newtheorem*{thmA}{Theorem A}
	\newtheorem*{thmB}{Theorem B}
	\newtheorem*{thmC}{Theorem C}
    \newtheorem*{thmD}{Theorem D}

\begin{document}

		\nocite{*}
		
	\title{On the Comparison of the D-new Modular Degree and the Shimura Degree of Modular Abelian Varieties}
	\author{Mohammad Masih Hamidi}
	\date{}
	
	\maketitle
	
	\begingroup
	\renewcommand{\thefootnote}{}
	\footnotetext{%
		\parbox[t]{0.8\textwidth}{%
			2020 Mathematics Subject Classification: 11F80, 11F25, 14K15.\\
			Keywords: Modular abelian varieties, component groups, Hecke algebras,
			multiplicity one, modular exponent, congruence exponent.}}
	\addtocounter{footnote}{-1}
	\endgroup

	\input{Abstract}

\input{Introduction}
	\input{Strategy}
	\input{Proof_of_A}
	\input{Multiplicity_one}
	\input{Proof_of_B}
	\input{Case_of_l=2_and_a_Criterion}

	\input{Higher_dim}
	\input{Level_lowering}

    \input{Appendix}
	\bibliography{paper}	
	\bibliographystyle{plain}
	\author{Mohammad Masih Hamidi}~~ \href{mohammad.hamidi@mail.mcgill.ca}{mohammad.hamidi@mail.mcgill.ca}\\ 
	Department of Mathematics, McGill University\\
	Address: 805 Sherbrooke Street West, Montreal, QC, Canada

			\end{document}

%% file: Abstract.tex
\begin{abstract}
	Let $E$ be an elliptic curve over $\mathbb{Q}$. Under mild assumptions on the local Galois representations of $E$, we prove that the degree of the Shimura parametrization of $E$ arising from the optimal quotient $J^D(M)\to E$ is equal to the $D$-new modular degree of $E$. As applications of this result, we establish the equality conjectured by Deines among the $D$-new modular degree, the $D$-new congruence number, and the Shimura degree of $E$, and, after introducing the Shimura congruence number into this framework, show that all four quantities coincide. Moreover, the two congruence numbers are always equal.
	
	We establish new cases of multiplicity one for Shimura Jacobians and use them to prove the main theorems, following a strategy introduced by Agashe, Ribet, and Stein. Building on these results, we obtain new examples of the failure of multiplicity one. Finally, we give a negative answer to a question of Papikian and Rabinoff asking whether the functorial map between the component groups of the N\'eron models of $J^D(M)$ and $E$ is surjective when $D>1$, and we relate this phenomenon to the failure of multiplicity one.
\end{abstract}

%% file: Introduction.tex
\section{Introduction}
In their paper \cite{Agashe2012}, Agashe, Ribet, and Stein answered a question of Frey and M\"uller asking whether the modular degree of a rational elliptic curve $E$ coincides with its congruence number. The latter is an invariant of $E$ which measures congruences between $f\in S_2(N)$, the newform (of weight 2) attached to $E$ coming from the modularity theorem, and other integral cusp forms. They showed that the modular degree always divides the congruence number, and that for elliptic curves with square-free conductors they are equal. If the conductor is not square-free, the two are not equal in general.

Let $N=DM$ be a square-free natural number, with $D$ being a product of an even number of primes. Let $A^D(M)$ denote the unique rational elliptic curve isogenous to $E$ appearing as a subvariety of $J^D(M)$, the Jacobian variety of level $M$ attached to an indefinite quaternion algebra of discriminant $D$ (see Section~2). One can define, similarly to $r^1(N)$, the congruence numbers $r^D(M)$ and $r^1_D(N)$ attached to $A^D(M)$ and $E$ (we will denote $E$ by $A^1_D(N)$ when viewed in $J^1_D(N)$), with respect to $J^D(M)$ and $J^1_{D}(N)$, respectively. Also, one can define the modular degrees attached to $A^D(M)$ and $A^1_D(N)$, denoted by $\delta^D(M)$ and $\delta^1_D(N)$, respectively (see Definition 3.1). We establish that the two last congruence numbers are always equal.  

\begin{thmA}
	$r^D(M)=r^1_{D}(N)$.
\end{thmA}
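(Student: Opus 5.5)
We want to show that the congruence number of $A^D(M)$ inside $J^D(M)$ equals that of $E=A^1_D(N)$ inside $J^1_D(N)$, the $D$-new part of $J_0(N)$. The plan is to express both numbers purely in terms of Hecke algebras and then identify those algebras via the Jacquet--Langlands correspondence.

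\textbf{Step 1: an algebraic description of the congruence numbers.} Write $\mathbb{T}^D$ for the Hecke algebra acting on $J^D(M)$ and $\mathbb{T}^1_D$ for the Hecke algebra acting on $J^1_D(N)$; each is a subring of the endomorphism ring. Let $I_f$ be the kernel of the homomorphism $\mathbb{T}\to\mathbb{Z}$ attached to the newform $f$. Following Agashe--Ribet--Stein, I will express the congruence number through this kernel. Set
\[
\mathbb{T}\to \mathbb{T}/I_f\times \mathbb{T}/\mathrm{Ann}(I_f),
\]
where the first factor is $\mathbb{Z}$ and the second factor is the image of $\mathbb{T}$ on the complement. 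The congruence number is then the order of the cokernel of this injection of $\mathbb{T}$ into $\mathbb{Z}\times\mathbb{T}/\mathrm{Ann}(I_f)$. Equivalently, it is the exponent (or order) of $\mathbb{Z}/(I_f+\mathrm{Ann}(I_f))$, projected to the $f$-factor.

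To justify this I would pass to cusp forms with integral Fourier coefficients, or for the quaternionic side to the character group / integral structure $H_1(X^D(M),\mathbb{Z})$. The point is that the congruence module computed from any faithful lattice $L$ on which $\mathbb{T}$ acts equals $\mathbb{Z}/(I_f+\mathrm{Ann}(I_f))$ \emph{if} $L\otimes\mathbb{Z}_\ell$ is free of rank one, or at least Gorenstein. So I will define $r^D(M)$ and $r^1_D(N)$ as the paper does (via the intersection $A\cap A^\perp$, or via pairings), and show that each agrees with the intrinsic Hecke-algebra quantity $\#\,\mathbb{T}/(I_f+\mathrm{Ann}_{\mathbb{T}}I_f)$. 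This works because the congruence number definition uses forms, i.e.\ $\mathrm{Hom}(\mathbb{T},\mathbb{Z})$, and the duality $S_2(\mathbb{Z})\cong \mathrm{Hom}(\mathbb{T},\mathbb{Z})$ holds on both sides.

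\textbf{Step 2: identify the Hecke algebras.} By Jacquet--Langlands (Ribet's isogeny), the $D$-new quotient of $J_0(N)$ and $J^D(M)$ have the same systems of Hecke eigenvalues. The rings $\mathbb{T}^D$ and $\mathbb{T}^1_D$ are both generated by $T_n$, and they embed into $\prod_g \mathcal{O}_g$ over the same set of newform orbits with the same images. They are therefore equal as subrings of $\prod_g\mathcal{O}_g$, and the isomorphism sends $I_f$ to $I_f$. Since $\#\mathbb{T}/(I_f+\mathrm{Ann}I_f)$ depends only on the ring $\mathbb{T}$ and the ideal $I_f$, Theorem A follows.

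\textbf{Main obstacle.} The main obstacle is Step 1. One must check that the paper's definition of $r^D(M)$, which for a Shimura curve has no $q$-expansions, really coincides with the intrinsic Hecke-algebra quantity. For the duality $\mathrm{Hom}(\mathbb{T}^D,\mathbb{Z})\cong$ (integral quaternionic forms), I would first try a perfect pairing $\mathbb{T}^D\times S\to\mathbb{Z}$ on the natural integral space $S$ of weight-2 quaternionic forms. If the definition is given via $\mathbb{T}$-modules of forms, this suffices without multiplicity one. If it is phrased via the lattice $H_1$, I would argue that the congruence number uses the saturations of $I_f$-torsion and $\mathrm{Ann}(I_f)$-torsion inside the $\mathbb{T}$-dual, which is intrinsic to $\mathbb{T}$.
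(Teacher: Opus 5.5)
Your proposal is correct and is essentially the paper's argument. The paper takes $\tilde t\in\mathbb{T}^D(M)$ acting as $t$ on $A^D(M)$ and as $0$ on $B^D(M)$, and views it in $\mathrm{End}(J^1_D(N))$ through the Jacquet--Langlands identification $\mathbb{T}^D(M)\cong\mathbb{T}^1_D(N)$ realized by a Hecke-equivariant isogeny $\phi$. Using $\phi(B^D(M))=\tilde I J^1_D(N)$, it checks that $\tilde t$ still kills the complement, which gives $R^D(M)\subseteq R^1_D(N)$; an isogeny in the other direction gives the reverse inclusion. This is exactly your observation that $\mathbb{Z}/R\cong\mathbb{T}/(I_f+\mathrm{Ann}_{\mathbb{T}}(I_f))$ is determined by the pair $(\mathbb{T},I_f)$ alone.

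What you call the main obstacle is not one: Definition 3.1 is already ring-theoretic, and $\ker(\pi_B)=\mathrm{Ann}_{\mathbb{T}}(I)$ simply because $\mathbb{T}$ acts faithfully on $J$ and $B=IJ$, so no Gorenstein, multiplicity-one, or perfect-pairing input is needed. Note also that $A\cap B$ computes the modular exponent $\delta$, not $r$, so it should play no role in this argument.
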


Deines~(\cite{deines2014shimura}, Conjectures~5.1.7 and 5.1.8) conjectured that
\[
r^1_D(N)=\delta^1_D(N)=\delta^D(M).
\] We use the newly defined invariant $r^D(M)$, Theorem $A$, and the multiplicity one results to obtain:
\begin{thmB}
	Let $\mathfrak{m}$ be the maximal ideal of $\mathbb{T}^D(M)$ defining the Gal($\bar{\mathbb{Q}}/\mathbb{Q}$)-representation $\bar{\rho}_{\mathfrak{m}}$ attached to $A^D(M)[l]$ for an odd prime $l$, which is the residue characteristic of $\mathfrak{m}$. Then, $\mathrm{ord}_l(r^1_D(N))=\mathrm{ord}_l(\delta^1_D(N)) \geq \mathrm{ord}_l(\delta^D(M)) $. Moreover, if $l>3$, $\bar{\rho}_{\mathfrak{m}}$ is absolutely irreducible, and $\mathfrak{m}$ is controllable (Definition 5.2) at all but possibly one prime dividing $D$, then the orders at $l$ of $\delta^D(M)$, $\delta^1_{D}(N)$, $r^D(M)$, and $r^1_D(N)$ are equal.

\end{thmB}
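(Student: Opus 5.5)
Theorem B is proved by localizing at $\mathfrak{m}$ and running the Agashe--Ribet--Stein comparison inside both $J^1_D(N)$ and $J^D(M)$, then linking them through Theorem A.

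\textbf{Step 1: the first chain.} For the $D$-new part of $J_0(N)$, I would adapt the Agashe--Ribet--Stein argument. Let $\mathbb{T}=\mathbb{T}^1_D(N)$ be the $D$-new Hecke algebra and $I$ the annihilator of $f$. The standard pairing argument shows that $\delta^1_D(N)$ divides $r^1_D(N)$. The reverse comparison at an odd $l$ comes from two facts:
\begin{itemize}
\item for the square-free level $N$, the $D$-new subspace of $S_2(\Gamma_0(N),\mathbb{Z})$ is, after completing at $\mathfrak{m}$, free of rank one over $\mathbb{T}_\mathfrak{m}$;
\item the character group of the $D$-new torus (equivalently $H_1$ of the $D$-new quotient) is locally free of rank one. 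This is multiplicity one for $J_0(N)$ at a square-free level, for odd $l$; Mazur and Ribet give it in the non-Eisenstein case, and the Eisenstein and reducible cases need separate care.
\end{itemize}
With these inputs the ARS identity $\mathrm{ord}_l\,\delta=\mathrm{ord}_l\,r$ carries over verbatim, which gives $\mathrm{ord}_l(r^1_D(N))=\mathrm{ord}_l(\delta^1_D(N))$.

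\textbf{Step 2: the inequality.} On the Shimura side, ARS gives $\delta^D(M)\mid r^D(M)$ unconditionally, since this divisibility only uses the pairing of $\mathbb{T}$ with cusp forms (quaternionic forms here). Theorem A gives $r^D(M)=r^1_D(N)$. Combining,
\[
\mathrm{ord}_l(\delta^D(M))\le \mathrm{ord}_l(r^D(M))=\mathrm{ord}_l(r^1_D(N))=\mathrm{ord}_l(\delta^1_D(N)).
\]

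\textbf{Step 3: equality.} The ARS equality for $J^D(M)$ holds once $H_1(J^D(M),\mathbb{Z})_\mathfrak{m}$ is free of rank two over $\mathbb{T}^D(M)_\mathfrak{m}$, equivalently once $J^D(M)[\mathfrak{m}]$ has dimension $2$. Granting that, Gorenstein duality identifies $\delta^D(M)$ with the congruence modulus, so $\mathrm{ord}_l\,\delta^D(M)=\mathrm{ord}_l\,r^D(M)$, and Theorem A closes the chain of equalities.

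So everything reduces to multiplicity one for Shimura curves under the hypotheses $l>3$, $\bar\rho_\mathfrak{m}$ absolutely irreducible, and $\mathfrak{m}$ controllable at all but at most one prime dividing $D$. This is the step I expect to be the main obstacle, and I would attack it using the multiplicity-one results of the paper's Section 5 as follows.
\begin{itemize}
\item Use the Čerednik--Drinfeld uniformization at a prime $p\mid D$. The character group of the toric part of the reduction of $J^D(M)$ at $p$ is identified, via Ribet's exact sequence, with character groups of $J_0(N)$ at primes $q$. Multiplicity one is known for the latter (Mazur--Ribet for $l\nmid q$, with Ribet's refinements at $q\mid N$).
\item Controllability of $\mathfrak{m}$ at $p$ should mean one of two things: the component group $\Phi_p$ has no $\mathfrak{m}$-torsion, or $\bar\rho_\mathfrak{m}$ is ramified or non-finite at $p$. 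Either version forces $J^D(M)[\mathfrak{m}]$ to be controlled by the toric part, giving dimension at most $2$.
\item When the hypothesis fails at one prime, I would use that prime as the uniformizing prime and keep the controllable primes for the Ribet comparison. This is why a single exception is permitted.
\item The assumption $l>3$ is needed to rule out Eisenstein and dihedral issues, and to make the Boston--Lenstra--Ribet argument apply: $J^D(M)[\mathfrak{m}]\cong\bar\rho_\mathfrak{m}^{\,t}$, with $t=1$ by the character-group count.
\end{itemize}
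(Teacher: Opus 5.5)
\textbf{Gap in Step~3: the case of exactly one non-controllable prime.} Suppose $\mathfrak m$ fails to be controllable at exactly one prime $p\mid D$. You reduce everything to $\dim J^D(M)[\mathfrak m]=2$, but in this case that statement is not available and should not be expected. Proposition~5.5 with $k=1$ only gives $\dim J^D(M)[\mathfrak m]\le 4$. A discriminant prime at which $\bar\rho_{\mathfrak m}$ is unramified with scalar Frobenius is, as far as I recall, exactly the configuration behind Ribet's multiplicity-two phenomenon (cf.\ Manning's multiplicity $2^k$, cited after Corollary~5.6).

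Concretely, your ``character-group count'' $t=1$ breaks down at $p$:
\begin{itemize}
\item Since $\bar\rho_{\mathfrak m}$ is unramified at $p$, all of $J^D(M)[\mathfrak m]$ lies in the finite part. It is an extension of a subgroup of $\Phi_p(J^D(M))[\mathfrak m]$ by $T[\mathfrak m]=\mathrm{Hom}(\mathcal X_p/\mathfrak m\mathcal X_p,\mu_l)$.
\item Freeness of $\mathcal X_p(J^D(M))_{\mathfrak m}$ makes only the toric piece one-dimensional. With scalar Frobenius, nothing prevents further copies of $\bar\rho_{\mathfrak m}$ from sitting over the component group.
\item The bound $t\le\dim \mathcal X/\mathfrak m\mathcal X$ holds only at a controllable prime: by monodromy if $\bar\rho_{\mathfrak m}$ is ramified, or by separating Frobenius eigenspaces if Frobenius is non-scalar.
\item If you uniformize at a good prime $q$ instead, Ribet's comparison expresses $\mathcal X_q(J^D(M))$ through the character group at the bad prime $p$ of a curve of smaller discriminant. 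For $D=pq$ this is $\mathcal X_p(J^1(pqM))$, where Mazur's principle only bounds the dimension mod $\mathfrak m$ by $2$.
\end{itemize}
So neither choice of uniformizing prime yields $t=1$.

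\textbf{What the paper does instead.} You never need the Jacobian here. Your instinct to uniformize at the bad prime produces the right object. By Helm's Proposition~5.8, for a controllable $q\mid D$,
\[
\mathcal X_p(J^D(M))^*_{\mathfrak m}\cong \mathcal X_q(J^{D/pq}(pqM))_{\tilde{\mathfrak m}}\otimes\mathbb T^D(M)_{\mathfrak m}.
\]
Since $\mathfrak m$ is controllable at every prime of $D/pq$, Proposition~5.5 gives $\dim J^{D/pq}(pqM)[\tilde{\mathfrak m}]=2$. Mazur's principle at $q$ then makes this module free of rank one. Because $J^D(M)$ has purely toric reduction at $p$, $\mathrm{End}(J^D(M))$ acts faithfully on it. Hence the saturation of $\mathbb T^D(M)$ equals $\mathbb T^D(M)$ after localizing at $\mathfrak m$. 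Proposition~3.4 (multiplicity one for characters) then gives $R^D(M)_{\mathfrak m}=S^D(M)_{\mathfrak m}$, with no Jacobian or Gorenstein input.

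The paper uses $\dim J^D(M)[\mathfrak m]=2$ only when $\mathfrak m$ is controllable at all primes of $D$. Even then it gets this from Proposition~5.5 by induction through Helm's tensor decomposition, not from a single Ribet sequence.

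\textbf{A smaller point in Step~1.} You do not need the $D$-new integral forms themselves to be free over $\mathbb T_{\mathfrak m}$. That is a Gorenstein statement and is doubtful at Eisenstein $\mathfrak m$. What you need is that their dual, the tangent space of the N\'eron model of $J^1_D(N)^\vee$, is free of rank one. This follows from Mazur's injectivity of cotangent spaces for $J^1(N)\to J^1_D(N)^\vee$, together with multiplicity one for differentials on $X_0(N)$. It needs no irreducibility hypothesis, and it is how the paper settles the Eisenstein case you left open.
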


Component groups play a central role in the comparison of Shimura degrees and congruence numbers through the work of Ribet and Takahashi \cite{ribet1997parametrizations}. The following result shows that, for a prime $p \mid D$, the map on component groups at $p$ induced by the dual of the inclusion $A^D(M) \hookrightarrow J^D(M)$ of abelian varieties, after passing to their Néron models is not necessarily surjective. This question was raised by Papikian and Rabinoff~\cite{papikian2016optimal}.

\begin{thmC}
	The map $\phi: \Phi_p(J^D(M)) \rightarrow \Phi_p(A^D(M))$ is not necessarily surjective.
\end{thmC}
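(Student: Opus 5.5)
The approach is to exhibit an explicit example. The plan is to first translate surjectivity of $\phi$ into a statement about character groups, and then to find $(D,M,p)$ where that statement fails. By Grothendieck's description of component groups of abelian varieties with toric reduction, for $p\mid D$ we have
\[
\Phi_p(J)\cong \mathrm{coker}\bigl(X_p(J)\to X_p(J)^{\vee}\bigr),
\]
where the map is induced by the monodromy pairing, and similarly for $A=A^D(M)$. By \v{C}erednik--Drinfeld, $X_p(J^D(M))$ is the module of degree-zero divisors on the vertices of the dual graph of the special fibre. Via Ribet's exact sequence it can be identified with a module of functions on the class set of the definite quaternion algebra of discriminant $D/p$, with Hecke action given by Brandt matrices.

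Under these identifications, the dual of the inclusion $A^D(M)\hookrightarrow J^D(M)$ induces on character groups the restriction $X_p(J)^{\vee}\to X_p(A)^{\vee}$ dual to $X_p(A)\to X_p(J)$. Hence
\[
\mathrm{coker}(\phi)\cong \mathrm{coker}\bigl(X_p(J)^{\vee}\to X_p(A)^{\vee}\bigr).
\]
This cokernel is nonzero exactly when the image of $X_p(A)$ in $X_p(J)$ is not saturated. Equivalently, the rank-one lattice $X_p(J)[I_f]$ (with $I_f$ the annihilator of $f$ in $\mathbb{T}^D(M)$) is strictly larger than the image of $X_p(A)$ together with its pairing.

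Next I would relate this to multiplicity one. If $X_p(J)\otimes \mathbb{T}_{\mathfrak m}$ is free of rank one over $\mathbb{T}^D(M)_{\mathfrak m}$, the standard Ribet--Takahashi argument shows that the $\mathfrak m$-part of the cokernel vanishes. Conversely, a failure of multiplicity one, meaning $\dim J^D(M)[\mathfrak m]>2$ or equivalently $\dim X_p(J)/\mathfrak m X_p(J)>1$, is the natural source of non-saturation. So I would search among the cases where multiplicity one is known to fail, namely $\ell=2$ and the non-controllable primes from the earlier sections, and not in the range covered by Theorem B.

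Concretely, I would choose a small square-free $N=DM$ with $D=pq$, for instance $D=6$ or $D=10$ with small $M$, and an elliptic curve $E$ of conductor $N$ with $2\mid r^1_D(N)$. For such an example I would:
\begin{enumerate}
\item compute the Brandt matrices for the definite algebra of discriminant $q$ and level $M$, giving $X_p(J)$ with its Hecke action and monodromy pairing;
\item extract the $f$-isotypic line $X_p(J)[I_f]$ and compute its self-pairing, which gives $|\mathrm{im}\,\phi|$;
\item compare this with $|\Phi_p(E')|=\mathrm{ord}_p(\Delta_{E'})$ for the curve $E'=A^D(M)$ in the isogeny class. Here the correct optimal curve must be identified, for example via Tate periods or the numerical criterion from the $\ell=2$ section.
\end{enumerate}
If $|\mathrm{im}\,\phi|<|\Phi_p(A^D(M))|$, this gives the counterexample, and one can check by hand that the index is a power of $2$, consistent with the failure of multiplicity one.

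The main obstacle I expect is pinning down which curve in the isogeny class is $A^D(M)$, and therefore the exact value of $|\Phi_p(A^D(M))|$. An error there changes the cokernel by an isogeny factor. I would first try to resolve this by a degree comparison: Ribet--Takahashi type formulas express $\delta^D(M)$ through the pairing on $X_p(J)[I_f]$. This should be checked against the independent computation of $\delta^1_D(N)$ and Theorem A, $r^D(M)=r^1_D(N)$, to confirm the identification.
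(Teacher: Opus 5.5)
\textbf{There is a genuine gap.} Your reduction of surjectivity to saturation of the image of $\mathcal{X}_p(A^D(M))$ in $\mathcal{X}_p(J^D(M))$ is fine. It silently uses optimality, which forces the saturation index to divide $|\Phi_p(A^D(M))|$. The computation you build on it, however, is mis-specified.

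The self-pairing $h_p=\langle g_p,g_p\rangle$ of a generator $g_p$ of $\mathcal{X}_p(J^D(M))[I_f]$ is not $i_p=|\mathrm{im}\,\phi|$. Writing $j_p=|\mathrm{coker}\,\phi|$, one has $i_p j_p=|\Phi_p(A^D(M))|$ and $\delta^D(M)=(h_p/i_p)\,j_p$. The quantity $i_p$ is the positive generator of the ideal $\langle g_p,\mathcal{X}_p(J^D(M))\rangle\subseteq\mathbb{Z}$; this is the gcd computed in the paper's Magma code. Your test $h_p<|\Phi_p(A^D(M))|$ is therefore equivalent to $\delta^D(M)<j_p^2$. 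That condition is sufficient for non-surjectivity but far from necessary, and the reason you give for the test is wrong.

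More seriously, even with $i_p$ computed correctly, your method needs $|\Phi_p(A^D(M))|$, and hence the identity of the optimal curve in the isogeny class. The remedy you propose does not work:
\begin{enumerate}
\item The Ribet--Takahashi formula contains $j_p$, which is the very unknown.
\item A cross-check against $\delta^1_D(N)$ would require $\delta^D(M)=\delta^1_D(N)$ locally at the relevant prime $\ell$. Theorem A only gives $r^D(M)=r^1_D(N)$.
\item By Ribet--Takahashi--Khare, any failure of surjectivity lives at Eisenstein maximal ideals. That is exactly where the paper does not establish this equality: Theorem B gives it only for non-Eisenstein $\mathfrak m$ with $\ell>3$ and controllability, and nothing at $\ell=2$.
\end{enumerate}
As written, the proposal yields neither a reliable criterion nor an actual example, and an existence statement needs the latter.

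The missing idea is to eliminate both unknowns by using two primes of $D$. The formula $\delta^D(M)=(h_p/i_p)\,j_p$ holds for every $p\mid D$, and $\delta^D(M)$ does not depend on $p$. So if $p\neq q$ divide $D$ with $h_p/i_p\neq h_q/i_q$, then $j_p\neq 1$ or $j_q\neq 1$. Both ratios are computed from Brandt modules and the character groups alone. There is no need to identify $A^D(M)$, its discriminant, or $\delta^D(M)$. The paper does exactly this for the isogeny class 102c with $D=6$, $M=17$. It finds $h_2/i_2=2$ and $h_3/i_3=1$, so $j_3=2j_2\geq 2$ and $\phi$ is not surjective at $p=3$.
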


We show that the obstruction to surjectivity in Theorem~C is governed by a multiplicity one phenomenon. 

\begin{thmD}
	Let $A^D(M)$ be an elliptic curve attached to a newform $f$ which appears as a subvariety of $J^D(M)$, and let $p$ be a prime dividing $D$. Let $l$ be a prime number. If for the maximal ideal $\mathfrak{m}$ of $\mathbb{T}^D(M)$ attached to $A^D(M)[l]$, $\mathcal{X}_p(J^D(M))_{\mathfrak{m}} \cong \mathbb{T}^D(M)_{\mathfrak{m}}$, then
	\[
	\phi: \Phi_p(J^D(M)) \rightarrow \Phi_p(A^D(M))
	\]
	is surjective locally at $l$.
\end{thmD}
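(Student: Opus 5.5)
Let $J=J^D(M)$, $A=A^D(M)$, $\mathbb{T}=\mathbb{T}^D(M)$, and let $I=\mathrm{Ann}_{\mathbb{T}}(A)$ be the kernel of the Hecke action on $A$. The plan is to use Grothendieck's monodromy pairing description of component groups together with the Čerednik--Drinfeld uniformization at $p\mid D$. Since $J$ has purely toric reduction at $p$, the character group $\mathcal{X}=\mathcal{X}_p(J)$ comes with a monodromy pairing, and $\Phi_p(J)\cong \mathrm{coker}(\mathcal{X}\to \mathrm{Hom}(\mathcal{X},\mathbb{Z}))$. For the elliptic curve $A$, which also has multiplicative reduction at $p$, we have $\mathcal{X}_p(A)\cong\mathbb{Z}$ and $\Phi_p(A)\cong\mathbb{Z}/u\mathbb{Z}$, where $u$ is the self-pairing of a generator.

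The first step is to express $\phi$ on character groups. The morphism $J\to A$, dual to the inclusion $A\hookrightarrow J$, induces a map $\mathcal{X}_p(A)\to\mathcal{X}$ by contravariance. Its image is the saturated rank-one submodule $\mathcal{X}[I]$. Following Ribet--Takahashi and Papikian--Rabinoff, the map $\phi:\Phi_p(J)\to\Phi_p(A)$ can then be identified with the map
\[
\mathrm{Hom}(\mathcal{X},\mathbb{Z})/\mathcal{X} \longrightarrow \mathrm{Hom}(\mathcal{X}[I],\mathbb{Z})/\mathcal{X}_p(A)
\]
induced by restriction of homomorphisms to $\mathcal{X}[I]$. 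Restriction $\mathrm{Hom}(\mathcal{X},\mathbb{Z})\to\mathrm{Hom}(\mathcal{X}[I],\mathbb{Z})$ is surjective precisely because $\mathcal{X}[I]$ is saturated. The real subtlety is therefore the compatibility of $\mathcal{X}_p(A)$ with the pairing, and I will recheck this identification carefully. The precise statement I need is that surjectivity of $\phi$ is equivalent to $\mathcal{X}/(\mathcal{X}[I]+\mathcal{X}[I]^{\perp})$ matching the expected quotient. Passing to the lattice description, the $l$-part of $\mathrm{coker}\,\phi$ is controlled by the cokernel of $\mathcal{X}\to\mathrm{Hom}(\mathcal{X}[I],\mathbb{Z})$, $x\mapsto\langle x,\cdot\rangle|_{\mathcal{X}[I]}$, compared with the pairing on $\mathcal{X}[I]$ itself.

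The second step is to localize at $\mathfrak{m}$ and use the hypothesis $\mathcal{X}_{\mathfrak{m}}\cong\mathbb{T}_{\mathfrak{m}}$. The monodromy pairing is Hecke-equivariant, so $\mathrm{Hom}(\mathcal{X}_{\mathfrak{m}},\mathbb{Z}_l)$ is a $\mathbb{T}_{\mathfrak{m}}$-module isomorphic to $\mathrm{Hom}_{\mathbb{Z}_l}(\mathbb{T}_{\mathfrak{m}},\mathbb{Z}_l)$. Under $\mathcal{X}_{\mathfrak{m}}\cong\mathbb{T}_{\mathfrak{m}}$, the submodule $\mathcal{X}[I]_{\mathfrak{m}}$ becomes $\mathbb{T}_{\mathfrak{m}}[I]$, and the pairing is $\langle s,t\rangle=\ell(st)$ for a single linear functional $\ell$ determined by $\langle 1,\cdot\rangle$. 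For $e$ a generator of $\mathbb{T}_{\mathfrak{m}}[I]$, the map $x\mapsto\langle x,\cdot\rangle|_{\mathbb{T}_{\mathfrak{m}}[I]}$ sends $x$ to $\ell(xe)$. Since $xe=\bar{x}e$ with $\bar{x}\in\mathbb{T}/I=\mathbb{Z}$, the image is generated by $\ell(e)$, which is the self-pairing up to a unit. Equivalently, every element of $\mathrm{Hom}(\mathcal{X}[I],\mathbb{Z}_l)$ lifts, because the cyclicity of $\mathcal{X}_{\mathfrak{m}}$ means $\mathcal{X}_{\mathfrak{m}}\to\mathcal{X}[I]_{\mathfrak{m}}^{\vee}$ factors through the free rank-one quotient $\mathcal{X}_{\mathfrak{m}}/I\mathcal{X}_{\mathfrak{m}}\cong\mathbb{T}/I\otimes\mathbb{Z}_l$, with no extra torsion. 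Comparing the two cokernels then shows that the $l$-part of $\mathrm{coker}\,\phi$ vanishes.

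The main obstacle is the precise identification in the first step, namely that the dual map on component groups corresponds exactly to this restriction and pullback, with the correct normalization of $\mathcal{X}_p(A)$ inside $\mathcal{X}$ (the image of $\mathcal{X}_p(A)$ versus the saturation $\mathcal{X}[I]$). I will try to get this from Grothendieck's functoriality for toric abelian varieties, as used by Papikian--Rabinoff. The failure of saturation should be exactly what multiplicity one removes: if $\mathcal{X}_{\mathfrak{m}}\cong\mathbb{T}_{\mathfrak{m}}$, then $\mathcal{X}_{\mathfrak{m}}/I\mathcal{X}_{\mathfrak{m}}$ is torsion-free of rank one, so the map $\mathcal{X}\to\mathcal{X}_p(A)$, which is dual to the inclusion, has $l$-torsion-free cokernel. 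That gives surjectivity locally at $l$.
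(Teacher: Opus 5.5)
\textbf{There is a genuine gap:} it sits in the step you flag yourself, and the argument you offer does not close it. Write $\iota\colon A\hookrightarrow J$ for the inclusion and $\pi\colon J\to A$ for the optimal quotient. Let $a$ generate $\mathcal{X}_p(A)$, and write $\pi^*a=k\,v_0$ with $v_0$ a generator of the saturated lattice $\mathcal{X}[I]$. Grothendieck's functoriality gives $\phi(\psi)=\psi(\pi^*a)$ in $\Phi_p(A)\cong\mathbb{Z}/u\mathbb{Z}$, so $\mathrm{coker}\,\phi\cong\mathbb{Z}/\gcd(k,u)$. It therefore suffices to show $l\nmid k$, and that is the whole content of Theorem D.

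Your Step 1 simply asserts $k=1$. That is false in general: it would make $\phi$ surjective for every $J^D(M)$, contradicting Theorem C (the curve 102c with $D=6$).

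Your proposed repair, torsion-freeness of $\mathcal{X}_{\mathfrak m}/I\mathcal{X}_{\mathfrak m}$, does not reach this. It is information about the \emph{quotient} map $\iota^*\colon\mathcal{X}\to\mathcal{X}_p(A)$ induced by the inclusion, which is surjective anyway. It says nothing about the \emph{sublattice} $\pi^*\mathcal{X}_p(A)\subseteq\mathcal{X}[I]$; a cyclic $\mathbb{T}_{\mathfrak m}$-lattice can perfectly well contain a non-saturated rank-one sublattice such as $l\,\mathbb{T}_{\mathfrak m}[I]$.

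Step 2 has the same blind spot, plus an error. You have $\langle e,e\rangle=\ell(e^2)=\bar e\,\ell(e)$, where $\bar e$ generates $R^D(M)_{\mathfrak m}$. So $\ell(e)$ fails to be the self-pairing up to a unit exactly when $l\mid r^D(M)$, which is the only case where anything needs proving. Moreover, the cokernels you compare never involve $\pi^*\mathcal{X}_p(A)$ at all.

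\textbf{The missing ingredient} is the link between $k$ and the modular degree and congruence number.
\begin{enumerate}
\item Since $\pi\circ\iota=[\delta^D(M)]$, you have $\iota^*\pi^*a=\delta^D(M)\,a$.
\item Under $\mathcal{X}_{\mathfrak m}\cong\mathbb{T}_{\mathfrak m}$, the $\mathbb{T}$-linear map $\iota^*\otimes\mathbb{Z}_l$ factors through $\mathbb{T}_{\mathfrak m}/I\mathbb{T}_{\mathfrak m}\cong\mathbb{Z}_l$.
\item Also $\mathcal{X}[I]\otimes\mathbb{Z}_l\cong\mathbb{T}_{\mathfrak m}[I]=(\ker\pi_{B^D(M)})_{\mathfrak m}$, and this maps into $R^D(M)_{\mathfrak m}=r^D(M)\mathbb{Z}_l$ there.
\item Hence $\mathrm{ord}_l\delta^D(M)\ge\mathrm{ord}_l k+\mathrm{ord}_l r^D(M)$. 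Since $\delta^D(M)\mid r^D(M)$, this forces $\mathrm{ord}_l k=0$.
\end{enumerate}
As a byproduct this also gives $\mathrm{ord}_l\delta^D(M)=\mathrm{ord}_l r^D(M)$.

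With this step inserted, your route would be a valid proof, and more self-contained than the paper's. The paper instead uses the Papikian--Rabinoff formula, writing $j_p$ as the ratio of the denominators of the idempotent $e$ in $\mathrm{End}(J^D(M))$ (which equals $\delta^D(M)$) and in $\mathrm{End}(\mathcal{X}_p(J^D(M)))$. It then shows these agree at $l$ via the saturation of $\mathbb{T}^D(M)$ in $\mathrm{End}_{\mathbb{T}}(\mathcal{X}_p)$, the lattice congruence and intersection ideals introduced before Theorem D, and Proposition 3.4. As written, though, your proposal does not contain the step above, so it does not prove the theorem.
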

\begin{rem}
	After this work was completed, Papikian pointed out a possible connection between Theorem D and Corollary 3.6 of Emerton \cite{emerton2003optimal}. It appears that Theorem D may also follow from Emerton's result after identifying the character group of the Raynaud torus with that of the toric part of the Néron model and verifying that the hypotheses of Emerton's result are satisfied in our setting. We nevertheless give an independent proof, based on the introduction of new congruence and intersection ideals and the study of denominators of the idempotent attached to $A^D(M)$ in various endomorphism rings.
\end{rem}
Theorem A is proved in Section 4, where we show that all (ring-theoretic) congruence numbers attached to the elliptic curve, seen as quotients of Jacobians of different Shimura curves, are equal. As a consequence, this implies that $\delta^D(M) \mid \delta^1(N)$. In particular, the product of ratios appearing in the Ribet-Takahashi formula (\cite{ribet1997parametrizations}, Theorem A), which relates different Shimura degrees, is an integer (Corollary \ref{cor4.2}). The key is to use the Jacquet-Langlands correspondence. 

To prove Theorem~B, we first establish Proposition~5.5 in Section~5.2, which generalizes a result of Helm in \cite{helm2007maps} through a refinement of his argument. This proposition bounds the dimension of \(J^D(M)[\mathfrak m]\) in terms of the number of certain ``bad'' primes dividing \(D\) (see Definition~5.2). Moreover, Corollary~5.6 gives the corresponding explicit upper bound for \(J^D(M)[\mathfrak m]\) when the residue characteristic of \(\mathfrak m\) is at least \(5\) and does not divide \(DM\). Both results extend to the corresponding \(Q\)-new subvarieties (see Section~5.2).

Using Proposition 5.5, in Section 6, we first conclude that the Shimura congruence number and Shimura degree are equal locally at odd primes $l$, under the conditions of Theorem B. On the other hand, we will prove the equality of the $D$-new modular degree and $D$-new congruence number with no local condition at odd primes. Thus, by Theorem A, all four numbers are equal, i.e. Theorem B is established. 

In Section 7, we focus on the case $l=2$, which is more subtle due to the lack of sufficient multiplicity-one results. Proposition 7.1 proves that the order at $2$ of the Shimura degree is less than or equal to the order at $2$ of the $D$-new modular degree for a large family of elliptic curves. Using the results of previous sections, we formulate a criterion to check the failure of multiplicity one. 

Section 8 describes how results of the previous sections can be generalized to higher-dimensional modular abelian varieties attached to newforms of level $N$ and weight 2. In particular, we will generalize a result of \cite{takahashi2001degrees} about divisibility of different shimura degrees. Then, we formulate the analog of Criterion 7.2 and add a part 2 to it by proving a generalization of the main result of \cite{agashe2008modular}. 

In the last section, we first use the aforementioned criteria to obtain more examples of the failure of multiplicity one for $J^D(M)$ and $J^1_D(N)$, the smallest conductor among them being $105$ (Section~9.1). We combine the results of the previous sections with a formula of \cite{papikian2016optimal} describing the order of the cokernel of the map on component groups in terms of the ratio of certain denominators of the idempotent attached to the optimal quotient to prove Theorems~C and~D (Section~9.2). Our argument is inspired by the strategy used in~\cite{agashe2008modular}, where multiplicity one is used to control the saturation of the Hecke algebra in an appropriate endomorphism ring.

\section{Notation and Background}

In this paper, we fix a square-free natural number $N=DM$, where $D$ is a product of an even number of primes ($D=1$ corresponds to the classical modular curve $X_0(M)$ of level $M$ and its Jacobian $J_0(M)$). We denote by $X^D(M)$ and $J^D(M)$ the Shimura curve of level $M$ over $\mathbb{Q}$ attached to an indefinite quaternion algebra of discriminant $D$ over $\mathbb{Q}$ and its Jacobian, respectively. Moreover, for any $Q$ dividing $M$, $J^D_Q(M)$ denotes the $Q$-new subvariety of $J^D(M)$. One then has a Hecke algebra $\mathbb{T}^D(M) \subseteq \text{End}(J^D(M))$ generated by the Hecke operators $T_q$ for prime numbers $q$. The subvariety $J^D_{Q}(M)$ is known to be $\mathbb{T}^D(M)$-invariant. Therefore, one can define the $Q$-new quotient of $\mathbb{T}^D(M)$, denoted by $\mathbb{T}^D_{Q}(M)$, as the image of $\mathbb{T}^D(M)$ in $\mathrm{End}(J^D_{Q}(M))$. The Jacquet-Langlands correspondence \cite{jacquet2006automorphic}, together with the work of Ribet \cite{ribet1990modular} and Faltings \cite{faltings1986finiteness}, imply that the natural map $\mathbb{T}^{D}(M) \rightarrow \mathbb{T}^1_{D}(N)$, sending the Hecke operator $T_q$ to the corresponding operator $T_q$ on the other side, is an isomorphism of rings, and that there is a Hecke-equivariant isogeny 
\begin{equation}
	\Phi: J^D(M) \rightarrow J^{1}_{D}(N).
\end{equation}
\begin{rem}
	The isogeny $\Phi$ is not necessarily unique.
\end{rem}
Given an elliptic curve $E$ over $\mathbb{Q}$ of conductor $N$, the above isogeny and the modularity theorem give a surjection of abelian varieties
\begin{equation}
	\pi: J^D(M) \rightarrow E.
\end{equation}
In the isogeny class of $E$, there is a unique elliptic curve $A^D(M)$ for which the corresponding surjection (which we again denote by $\pi$) has connected kernel. Using the autoduality of the abelian varieties $J^D(M)$ and $A^D(M)$, it is known that the map
\[
\hat{\pi} \circ \pi \in \mathrm{End}(A^D(M))
\]
is multiplication by an integer $\delta^D(M)$, called the Shimura degree of $E$.
\begin{rem}
	For $D=1$, $\delta^1(N)$ equals the degree of the classical modular parametrization $X_0(N) \rightarrow A^1(N)=E$. 
\end{rem}
Now, let $l$ be a prime dividing $N$. The following theorem describes the N\'eron model of $J^D(M)$ over $\mathbb{Z}_l$.
\begin{thm}
	\begin{enumerate}
		\item
		If $l$ divides $M$ (due to Deligne--Rapoport \cite{deligne1973schemas} and Buzzard \cite{buzzard1997integral}), then the fiber at $l$ of the N\'eron model $\mathcal{J}^D(M)$ of $J^D(M)$ sits in the following exact sequence:
		\begin{equation}
			0\rightarrow \mathcal{J}^D(M)^0_{/\mathbb{F}_l} \rightarrow \mathcal{J}^D(M)_{/\mathbb{F}_l} \rightarrow \Phi_l(J^D(M)) \rightarrow 0,
		\end{equation}
		where $\mathcal{J}^D(M)^0_{/\mathbb{F}_l}$ and $\Phi_l(J^D(M))$ are the connected component of the identity and the group of connected components of the special fiber, respectively.
		$\mathcal{J}^D(M)^0_{/\mathbb{F}_l}$ is an extension of two copies of the abelian variety $\mathcal{J}^D(M/l)_{/\mathbb{F}_l}$ by a torus $T$, as in the following exact sequence:
		\begin{equation}
			0 \rightarrow T \rightarrow \mathcal{J}^D(M)^0_{/\mathbb{F}_l}
			\rightarrow \mathcal{J}^D(M/l)_{/\mathbb{F}_l}
			\times \mathcal{J}^D(M/l)_{/\mathbb{F}_l}
			\rightarrow 0.
		\end{equation}
		\item
		If $l$ divides $D$ (due to Cerednik \cite{vcerednik1976uniformization} and Drinfeld \cite{drinfel1976coverings}), then we still have the exact sequence $(3)$, but $\mathcal{J}^D(M)^0_{/\mathbb{F}_l}$ is equal to $T$. In other words, the N\'eron model has purely toric reduction in this case.
	\end{enumerate}
\end{thm}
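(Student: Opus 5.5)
This theorem collects known results on the reduction of Shimura curves, so the plan is to pass from the integral model of $X^D(M)$ to the Néron model of its Jacobian. The key tool is Raynaud's theorem: for a proper flat regular curve $\mathcal{X}$ over $\mathbb{Z}_l$ with semistable special fiber, the identity component of the Néron model of the Jacobian is $\mathrm{Pic}^0_{\mathcal{X}/\mathbb{Z}_l}$. Its special fiber is then an extension of $\prod \mathrm{Jac}(\tilde C_i)$ by a torus whose character group is $H_1$ of the dual graph. Here the $\tilde C_i$ are the normalizations of the irreducible components. The component group $\Phi_l$ is computed from the intersection (Laplacian) matrix of the dual graph. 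The exact sequence (3) is just the definition of $\Phi_l$ as the cokernel of the open immersion of the identity component into the special fiber. So the whole content lies in identifying the special fibers of semistable models of $X^D(M)$.

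\textbf{Case $l \mid M$.} For $D=1$, I would invoke the Deligne--Rapoport model of $X_0(M)$ over $\mathbb{Z}_l$. Its special fiber consists of two copies of $X_0(M/l)_{\mathbb{F}_l}$, glued transversally at the supersingular points: one copy is attached via $\mathrm{Frob}$, the other via $\mathrm{Ver}$. For $D>1$, Buzzard constructs the analogous integral model of $X^D(M)$ from moduli of false elliptic curves with $\Gamma_0(l)$-structure. Its special fiber again consists of two copies of $X^D(M/l)_{\mathbb{F}_l}$ crossing at the supersingular points. If the model is not regular, I would resolve the singularities by inserting chains of $\mathbb{P}^1$'s at nodes with thickness greater than $1$. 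Rational curves contribute nothing to the abelian part, so Raynaud gives the abelian quotient $\mathcal{J}^D(M/l)\times\mathcal{J}^D(M/l)$ and a toric kernel $T$, which is sequence (4).

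\textbf{Case $l \mid D$.} By the Čerednik--Drinfeld $l$-adic uniformization, $X^D(M)\otimes\mathbb{Q}_l$ (after an unramified quadratic twist) is a quotient $\Gamma\backslash\widehat{\Omega}$ of Drinfeld's $l$-adic upper half plane. Here $\Gamma$ is an arithmetic subgroup of the definite quaternion algebra of discriminant $D/l$. The formal model $\widehat{\Omega}$ has special fiber a tree of $\mathbb{P}^1$'s, the Bruhat--Tits tree. Hence the quotient model has special fiber consisting entirely of rational curves meeting at ordinary double points, and its dual graph is $\Gamma\backslash\mathcal{T}$. Applying Raynaud's theorem again (after resolving, which adds only more $\mathbb{P}^1$'s), the abelian part is trivial. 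Thus $\mathcal{J}^D(M)^0_{/\mathbb{F}_l}=T$, a torus with character group $H_1(\Gamma\backslash\mathcal{T},\mathbb{Z})$, which is purely toric reduction. The twist does not matter, since toric reduction is preserved by unramified twisting.

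\textbf{Main obstacle.} The hardest step is making sure that Raynaud's theorem really applies: the models must be proper flat regular with reduced normal-crossings special fiber. This is where the precise results of Deligne--Rapoport, Buzzard, Čerednik and Drinfeld are needed rather than reproved. In particular, for $l\mid M$ with $D>1$ one must check that Buzzard's model is semistable and identify its components with $X^D(M/l)$. Since the paper attributes both parts to these sources, I would cite them for the structure of the special fibers and supply only the Raynaud-theoretic deduction above.
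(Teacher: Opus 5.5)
Your proposal is correct and is essentially the paper's approach. The paper gives no argument of its own for this background theorem: it attributes part 1 to Deligne--Rapoport and Buzzard and part 2 to \v{C}erednik--Drinfeld. Your Raynaud-theoretic passage from those semistable integral models to the N\'eron model is the standard deduction implicit in those citations.

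One small precision concerns the \v{C}erednik--Drinfeld case. If $\Gamma$ denotes the full unit group of the Eichler $\mathbb{Z}[1/l]$-order, the dual graph of the special fiber is $\Gamma_+\backslash\mathcal{T}$ rather than $\Gamma\backslash\mathcal{T}$. Here $\Gamma_+$ is the subgroup of elements whose reduced norm has even $l$-adic valuation, and it acts on $\mathcal{T}$ without inversions. This does not affect your purely toric conclusion.
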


In either case of the above theorem, we let $\mathcal{X}_l(J^D(M))=\mathrm{Hom}(T,G_m)$ be the character group of the corresponding torus. Also, with $E$ and $Q$ as before, $\mathcal{X}_l(J^D_Q(M))$ and $\mathcal{X}_l(A^D(M))$ denote the character groups of $J^D_Q(M)$ and $A^D(M)$.

%% file: Strategy.tex
	\section{Congruence and Modular Numbers}
	
	In this section, we generalize the set-up introduced in \cite{Agashe2012} to the Jacobians of Shimura curves.
	We first generalize the notions of congruence exponent and modular exponent for elliptic curves to abelian varieties arising from Shimura Jacobians and their new subvarieties.
	\begin{Def}
		
		With notation as in Section 2, we can write $J^D_Q(M)$ uniquely as a sum of an elliptic curve $A^D_Q(M)$, isogenous to $E$, and the abelian variety $B^D_Q(M)=IJ^D_Q(M)$, where $I=\ker(\pi_{A^D_Q(M)}:\mathbb{T}^D_Q(M)\rightarrow \mathrm{End}(A^D_Q(M)))$.
		Similarly, one has a map
		$\pi_{B^D_Q(M)}:\mathbb{T}^D_Q(M)\rightarrow \mathrm{End}(B^D_Q(M))$,
		and let $\mathbb{T}^D_Q(M)_{A^D_Q(M)}$ and
		$\mathbb{T}^D_Q(M)_{B^D_Q(M)}$ be
		$\pi_{A^D_Q(M)}(\mathbb{T}^D_Q(M))=\mathbb{Z}$
		and
		$\pi_{B^D_Q(M)}(\mathbb{T}^D_Q(M))$,
		respectively.

		\begin{enumerate}[label=(\alph*),start=\intfromalph{i}]
			
			\item
			The \emph{congruence ideal} attached to $A^D_Q(M)$ is
			\[
			R^D_Q(M):=\pi_{A^D_Q(M)}\bigl(\ker(\pi_{B^D_Q(M)})\bigr)
			\subseteq \mathbb{Z}.
			\]
			The \emph{congruence exponent}, respectively the \emph{congruence number}, attached to $A^D_Q(M)$ are
			\[
			r^D_Q(M)=\exp\bigl(\mathbb{Z}/R^D_Q(M)\bigr),
			\qquad
			\tilde r^D_Q(M)=\#\bigl(\mathbb{Z}/R^D_Q(M)\bigr);
			\]
			
		\item	The \emph{intersection ideal} attached to $A^D_Q(M)$ is
			\[
			S^D_Q(M):=
			\operatorname{Ann}_{\mathbb{Z}}
			\bigl(A^D_Q(M)\cap B^D_Q(M)\bigr).
			\]
			The \emph{modular exponent}, respectively the \emph{modular number}, attached to $A^D_Q(M)$ are
			\[
			\delta^D_Q(M)=
			\exp\bigl(A^D_Q(M)\cap B^D_Q(M)\bigr),
			\qquad
			\tilde{\delta}^D_Q(M)=
			\#\bigl(A^D_Q(M)\cap B^D_Q(M)\bigr).
			\]
			
		\end{enumerate}
	\end{Def}
	
     Here, $R^1_1(N)$ and $S^1_1(N)$ are just $R$ and $S$ in Definition~5.1 of \cite{Agashe2012}. From now on, we drop the subscript $Q$ when $Q=1$.

	\begin{rem}
		Since $J^D(M)$ is self-dual, it follows that $\# (A^D(M)\cap IJ^D(M))$ is a perfect square, and that $\delta^D_Q(M)=\sqrt{\tilde{\delta}^D_Q(M)}$. However, in general, $\# (A^D_{Q}(M)\cap IJ^D_{Q}(M))$ is not necessarily a perfect square. For example, for the isogeny class 238a in Cremona's database, using the code in the appendix of \cite{deines2014shimura}, we have computed that
		$\#(E\cap IJ^1_{119}(238))=8$. This indicates that $J^1_{119}(238)$ is not self-dual; otherwise, let $\Psi $ be a principal polarization of it. Using strong multiplicity one for newforms, it is not hard to see that $E\cap IJ^1_{119}$ is the kernel of the endomorphism $E \hookrightarrow J^1_{119}(238) \xrightarrow{\Psi} {J^1_{119}(238)}^{\vee} \twoheadrightarrow E^{\vee} \xrightarrow{\cong} E$, which is multiplication by some positive integer on $E$ (since it is defined over $\mathbb{Q}$). Thus, $\# (E\cap IJ^1_{119})$ is a perfect square, which is a contradiction. Instead, one can define the $D$-new modular degree as the exponent of $E\cap IJ^1_{D}$, which agrees with Definition 3.1, i.e. the size of $\mathbb{Z}/S^1_{D}(N)$.
	\end{rem}
	
We now define the multiplicity one properties and describe how they are related to the equality of the modular exponent and the congruence exponent of the elliptic curve. Let $l$ be a prime number dividing $DM$, and let $\mathcal{J}^D_{\mathbb{Q}}(M)$ denote the N\'eron model of $J^D_{\mathbb{Q}}(M)$ over $\mathbb{Z}_{l}$. We write $\mathrm{Cot}_{\mathbb{F}_l}(\mathcal{J}^D_{\mathbb{Q}}(M))$ for the cotangent space at the identity section of the special fiber $\mathcal{J}^D_{\mathbb{Q}}(M)_{\mathbb{F}_l}$, and $\mathcal{X}_l(J^D_{\mathbb{Q}}(M))$ for the character group of the toric part of the identity component of this special fiber.
	
	\begin{Def}\label{1.8.}
		Let $\mathfrak{m}$ be a maximal ideal of $\mathbb{T}^D_Q(M)$ of residue characteristic $l$ such that $l^2 \nmid N$. We say $\mathfrak{m}$ satisfies
		\begin{enumerate}[label=(\alph*),start=\intfromalph{i}]
			\item 
			\emph{multiplicity one for differentials}, if 
			\begin{equation*}
				\text{dim}_{\mathbb{T}^D_Q(M)/\mathfrak{m}}\text{Cot}_{\mathbb{F}_l}(\mathcal{J}^D_Q(M))[\mathfrak{m}]\leq 1;
			\end{equation*}
			\item 
			\emph{multiplicity one for the Jacobian}, if
			\begin{equation*}
				\text{dim}_{\mathbb{T}^D_Q(M)/\mathfrak{m}}(J^D_Q(M)[\mathfrak{m}])=2;
			\end{equation*}
			\item 
	    	\emph{multiplicity one for characters}, if
	     	\[
	         \dim \mathcal{X}_l(J^{D}_{Q}(M))/
		      \mathfrak{m}\mathcal{X}_l(J^{D}_{Q}(M))
	           	=1,
		    \]
	       	and if $l$ divides $D$ or $Q$.
		\end{enumerate}
	\end{Def}
	
	\begin{prop}
		Let $l$ be a prime number. If the maximal ideal
		$\mathfrak{m}\subseteq \mathbb{T}^D_{Q}(M)$
		of characteristic $l$, attached to the elliptic curve $A^D_{Q}(M)$, satisfies either of the multiplicity one types above, then
		$\operatorname{ord}_{l}(r^D_Q(M))
		=
		\operatorname{ord}_{l}(\delta^D_Q(M))$.
	\end{prop}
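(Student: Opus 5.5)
Write $\mathbb{T}=\mathbb{T}^D_Q(M)$, $J=J^D_Q(M)$, $A=A^D_Q(M)$ and $B=B^D_Q(M)=IJ$. I will follow the strategy of Agashe, Ribet and Stein (\cite{Agashe2012}, Theorem~3.6 and Section~5).

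\textbf{Step 1: the unconditional comparison.} First I identify both invariants algebraically. Let $e\in\mathbb{T}\otimes\mathbb{Q}$ be the idempotent cutting out $A$. The congruence ideal $R^D_Q(M)$ is the set of integers $n$ with $ne\in\mathbb{T}$, so $r^D_Q(M)$ is the denominator of $e$ in $\mathbb{T}$. The map $A\times B\to J$ is an isogeny with kernel isomorphic to $A\cap B$. An integer $n$ kills $A\cap B$ exactly when $ne\in \mathrm{End}(J)$, so $\delta^D_Q(M)$ is the denominator of $e$ in $\mathrm{End}(J)$.

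Since $\mathbb{T}\subseteq \mathrm{End}(J)$, this gives $\delta^D_Q(M)\mid r^D_Q(M)$. It also shows that equality at $l$ follows once $\mathbb{T}_{\mathfrak m}$ is saturated in $(\mathrm{End}(J)\otimes\mathbb{Z}_l)_{\mathfrak m}$. Concretely, I want: if $t\in\mathrm{End}(J)$ and $lt\in\mathbb{T}_{\mathfrak m}$, then $t\in\mathbb{T}_{\mathfrak m}$. Only the component of $e$ at $\mathfrak m$ matters for the $l$-part, because $e$ lives in the $\mathfrak m$-component of $\mathbb{T}\otimes\mathbb{Z}_l=\prod_{\mathfrak m'}\mathbb{T}_{\mathfrak m'}$. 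So it suffices to find a faithful $\mathbb{T}_{\mathfrak m}$-module $X$ on which $\mathrm{End}(J)$ acts compatibly and such that $X\cong\mathbb{T}_{\mathfrak m}$. Then $\mathrm{End}_{\mathbb{T}_{\mathfrak m}}(X)=\mathbb{T}_{\mathfrak m}$, and the Hecke-commuting part of $\mathrm{End}(J)$, which contains $ne$, lands in $\mathbb{T}_{\mathfrak m}$.

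\textbf{Step 2: freeness from each multiplicity one hypothesis.} I treat the three cases separately.

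(a) Take $X=\mathrm{Cot}(\mathcal{J}_{\mathbb{Z}_l})_{\mathfrak m}$, the cotangent space of the Néron model over $\mathbb{Z}_l$ localized at $\mathfrak m$. Here $l^2\nmid N$, so Mazur's argument applies: the $q$-expansion / Katz pairing, or its Shimura-curve analogue through the character group, makes $\mathrm{Cot}\otimes\mathbb{Q}$ free of rank one over $\mathbb{T}\otimes\mathbb{Q}$. Nakayama applied to the hypothesis $\dim\mathrm{Cot}_{\mathbb{F}_l}[\mathfrak m]\le1$ then gives $X$ cyclic, hence free, after dualizing via the perfect pairing with $\mathbb{T}$.

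(b) Take $X=\mathrm{Ta}_{\mathfrak m}(J)$. By Mazur's theorem the hypothesis $\dim J[\mathfrak m]=2$ gives $\mathrm{Ta}_{\mathfrak m}(J)$ free of rank $2$ over $\mathbb{T}_{\mathfrak m}$. The commutant of $\mathbb{T}_{\mathfrak m}$ in $\mathrm{End}(J)\otimes\mathbb{Z}_l$ acts $\mathbb{T}_{\mathfrak m}$-linearly, and the Tate conjecture (Faltings) identifies it with a subring of $\mathrm{End}_{\mathbb{T}_{\mathfrak m}[G_{\mathbb{Q}}]}$. Since $\mathbb{T}\otimes\mathbb{Q}$ is a product of fields, this commutant is $\mathbb{T}_{\mathfrak m}$ up to saturation, and freeness gives the saturation.

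(c) Take $X=\mathcal{X}_l(J)_{\mathfrak m}$, which is faithful because the reduction at $l$ is purely toric on the $l$-new part. Endomorphisms of $J$ extend to the Néron model and hence act on $\mathcal{X}_l$. Cyclicity from the hypothesis and Nakayama, plus rank one over $\mathbb{T}\otimes\mathbb{Q}$, give $X\cong\mathbb{T}_{\mathfrak m}$.

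\textbf{Main obstacle.} I expect case (a) to be the hardest. Making the cotangent space at $\mathfrak m$ rank one over $\mathbb{T}\otimes\mathbb{Q}$ and integrally dual to $\mathbb{T}_{\mathfrak m}$ for Shimura curves requires replacing $q$-expansions by a $\mathbb{T}$-equivariant perfect pairing. I would first try to deduce it from Jacquet--Langlands together with Ribet's results. Failing that, I would use Cerednik--Drinfeld uniformization to compare $\mathrm{Cot}$ with $\mathcal{X}_l\otimes\mathbb{F}_l$ at primes $l\mid D$, reducing to case (c). The saturation argument of Step 1 itself is routine.
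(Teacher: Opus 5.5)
Your proposal is correct and is essentially the paper's argument. Both reduce the $l$-part equality to the local saturation $\mathbb{T}'_{\mathfrak m}=\mathbb{T}_{\mathfrak m}$ (you via the denominators of $e$ in $\mathbb{T}$ and in $\mathrm{End}(J)$, the paper via the injection $S^D_Q(M)/R^D_Q(M)\hookrightarrow \mathbb{T}'/\mathbb{T}$), and both get that saturation from the Nakayama-freeness of the tangent space, the $\mathfrak m$-adic Tate module, or the character group, on which $\mathrm{End}(J)$ acts faithfully; the only superfluous parts are Faltings in case (b), since an element of $\mathbb{T}_{\mathfrak m}\otimes\mathbb{Q}$ preserving a free module $\mathbb{T}_{\mathfrak m}^2$ already lies in $\mathbb{T}_{\mathfrak m}$, and your ``main obstacle'' in (a), since the hypothesis together with Nakayama applied to the dual $\mathrm{Tan}$ and rational rank one already gives $\mathrm{Tan}_{\mathbb{Z}_l}(\mathcal{J})_{\mathfrak m}\cong\mathbb{T}_{\mathfrak m}$ with no integral pairing needed.
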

	
	\begin{proof}
		Let $\mathbb{T}^D_Q(M)'$ be the saturation of $\mathbb{T}^D_Q(M)$ in End($J^D_Q(M)$), i.e. $\mathbb{T}^D_Q(M)'=\mathbb{T}^D_Q(M)\otimes \mathbb{Q}~\cap$ End($J^D_Q(M)$). Now, using Pontryagin duality, as well as Nakayama's lemma, cases (i), (j), and (k) in Definition 3.3 imply that
		${\text{Tan}_{\mathbb{Z}_l}(\mathcal{J}^D_Q(M))}_\mathfrak{m} \cong \mathbb{T}^D_Q(M)_{\mathfrak{m}}$,  $Ta_{\mathfrak{m}}(J^D_Q(M)) \cong \mathbb{T}^D_Q(M)_{\mathfrak{m}}^2 $, and $\mathcal{X}_l(J^{D}_{Q}(M))_{\mathfrak{m}} \cong \mathbb{T}^D_Q(M)_{\mathfrak{m}}$, respectively. Here, $\mathrm{Tan}_{\mathbb{Z}_l}(\mathcal{J}^D_Q(M))$ and $Ta_{\mathfrak{m}}(J^D_Q(M))$ denote the tangent space at the identity section of $\mathcal{J}^D_Q(M)$ and the $\mathfrak{m}$-adic Tate module of $J^D_Q(M)$, respectively. By the faithfulness of the action of $\mathrm{End}(J^D_Q(M))_{\mathfrak{m}}$ on $\text{Tan}_{\mathbb{Z}_l}(\mathcal{J}^D_Q(M))_{\mathfrak{m}}$, $Ta_{\mathfrak{m}}(J^D_Q(M))$, and $\mathcal{X}_l(J^{D}_{Q}(M))_{\mathfrak{m}}$ (faithfulness of the action on the character group follows from the fact that $J^D_Q(M)$ has purely toric reduction at primes dividing $D$ or $Q$), in either case, it follows that $\mathbb{T}^D_{Q}(M)'_{\mathfrak{m}}=\mathbb{T}^D_{Q}(M)_{\mathfrak{m}}$.
		
		The proof can be finished as follows: first note that, by Definition 3.1, $R^D_Q(M) \subseteq S^D_Q(M)$, and this induces a surjection $\mathbb{Z}/R^D_Q(M) \rightarrow \mathbb{Z}/S^D_Q(M)$. Hence, $\delta^D_Q(M) \mid r^D_Q(M)$. Moreover, it is not hard to see that
		$S^D_Q(M)=\mathbb{T}^D_Q(M)' \cap \pi_{A^D}(\mathbb{T}^D_Q(M))$
		and
		$R^D_Q(M)=\mathbb{T}^D_Q(M)\cap S^D_Q(M)$,
		where the intersection is taken in
		$\mathrm{End}(J^D_Q(M))\otimes \mathbb{Q}$. Therefore, we get an injection
		\[
		S^D_Q(M)/R^D_Q(M)=S^D_Q(M)/{(S^D_Q(M)\cap \mathbb{T}^D_Q(M))}\cong (S^D_Q(M)+\mathbb{T}^D_Q(M))/{\mathbb{T}^D_Q(M)} \xhookrightarrow{} \mathbb{T}^D_Q(M)'/\mathbb{T}^D_Q(M).
		\]
		Since $\mathbb{T}^D_Q(M)'_{\mathfrak{m}}=\mathbb{T}^D_Q(M)_{\mathfrak{m}}$, this implies that $R^D_Q(M)_{\mathfrak{m}}=S^D_Q(M)_\mathfrak{m}$. Hence, $\text{ord}_l(r^D_Q(M))=\text{ord}_l(\delta^D_Q(M))$, for $l$ the characteristic of $\mathbb{T}^D_Q(M)/\mathfrak{m}$.
	\end{proof}

%% file: Proof_of_A.tex
\section{Proof of Theorem A}
\begin{proof}[Proof of Theorem A]
	Recall that, by the results of Jacquet--Langlands, Faltings, and Ribet, there is a $\mathbb{T}^D(M)$-equivariant isogeny
	\[
	\phi : A^D(M)+B^D(M)= J^D(M) \rightarrow J^1_{D}(N)=\phi(A^D(M))+\phi(B^D(M)).
	\]
	Thus, the elliptic curve $\phi(A^D(M))$ is the optimal quotient of the dual of $J^1_D(N)$. Note that $\phi(B^D(M))=\tilde{I}J^1_D(N)$, where $\tilde{I}=\text{ker}(\pi_{\phi(A^D(M))})$. By definition, if $t \in R^D(M)=\pi_{A^D(M)}(\mathrm{ker}(\pi_{B^D(M)}))$, there is a $\tilde{t} \in \mathbb{T}^D(M)$ such that $\tilde{t}(a)=t(a)$ and $\tilde{t}(b)=0$ for any $a\in A^D(M)$ and any $b \in B^D(M)$. Now, consider this $\tilde{t}$ in $\mathrm{End}(J^1_D(N))$. For any $\phi(b) \in \phi(B^D(M))$, $\tilde{t}(\phi(b))=\phi(\tilde{t}(b))=\phi(0)=0$. Therefore, the identity morphism on $\mathbb{Z}$ maps $R^D(M)$ into $R^1_{D}(N)$. Hence, we get a natural ring surjection $\mathbb{Z}/R^D(M) \rightarrow \mathbb{Z}/R^1_{D}(N)$. In particular, $r^1_{D}(N) \mid r^D(M)$. Considering a Hecke-equivariant isogeny similar to $\phi$, but in the opposite direction, whose existence follows similarly, we get the divisibility in the other direction, and this finishes the proof of the theorem. 
\end{proof}

\begin{cor}\label{cor4.2}
	
$\delta^{D}(M) \mid \delta^1(N)$.
	
\end{cor}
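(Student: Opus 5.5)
We chain divisibilities through the congruence numbers:
\[
\delta^D(M)\mid r^D(M)=r^1_D(N)\mid r^1(N)=\delta^1(N).
\]
The first divisibility is the general fact $\delta^D_Q(M)\mid r^D_Q(M)$ shown in the proof of Proposition 3.4. It comes from $R^D(M)\subseteq S^D(M)$, which holds for any $Q$ without any multiplicity one hypothesis. The middle equality is Theorem A. The last equality is the theorem of Agashe--Ribet--Stein \cite{Agashe2012}: for square-free $N$ the modular degree equals the congruence number, $r^1(N)=\delta^1(N)$. Here $\delta^1(N)$ is the degree of $X_0(N)\to E$, which by Remark 2.3 equals the modular exponent in Definition 3.1 for $D=1$.

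The remaining step is $r^1_D(N)\mid r^1(N)$, comparing the congruence exponent of $E$ in the $D$-new part with that in all of $J_0(N)$. We argue exactly as in the proof of Theorem A. The inclusion $J^1_D(N)\subseteq J_0(N)$ is Hecke-stable, and $\mathbb{T}^1_D(N)$ is the image of $\mathbb{T}^1(N)$ acting on $J^1_D(N)$. Write $J_0(N)=A^1(N)+B^1(N)$ and $J^1_D(N)=A^1_D(N)+B^1_D(N)$. By strong multiplicity one, $A^1_D(N)=A^1(N)=E$ is the same optimal curve, and $B^1_D(N)\subseteq B^1(N)$ because $I J^1_D(N)\subseteq I J_0(N)$.

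Now take $t\in R^1(N)$. There is some $\tilde t\in\mathbb{T}^1(N)$ acting as $t$ on $E$ and killing $B^1(N)$. Its image in $\mathbb{T}^1_D(N)$ acts as $t$ on $E$ and kills $B^1_D(N)$, so $t\in R^1_D(N)$. Hence $R^1(N)\subseteq R^1_D(N)$, which gives the surjection $\mathbb{Z}/R^1(N)\twoheadrightarrow\mathbb{Z}/R^1_D(N)$ and so $r^1_D(N)\mid r^1(N)$.

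The main obstacle is the bookkeeping of conventions. The paper defines $r$ as an exponent and $\delta$ as an exponent of an intersection. We have to check that Agashe--Ribet--Stein's equality is stated for these same exponent versions, and that the quantity $\delta^1(N)$ in the corollary, the degree of $X_0(N)\to E$, agrees with the exponent of $A^1(N)\cap B^1(N)$. For the latter we use self-duality of $J_0(N)$: then $\hat\pi\circ\pi$ is multiplication by the exponent of $A\cap B$. This is the standard identification in \cite{Agashe2012} and in Remark 3.2. If one prefers to avoid the square-free equality, the weaker statement $\delta^1_D(N)$-type divisibilities would not suffice. So the citation of \cite{Agashe2012}, Theorem for square-free level, is essential, and it applies since $N=DM$ is square-free.
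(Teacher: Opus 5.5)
Your proposal is correct and follows essentially the same route as the paper: the chain $\delta^D(M)\mid r^D(M)=r^1_D(N)\mid r^1(N)=\delta^1(N)$, using $R^D(M)\subseteq S^D(M)$, Theorem A, the fact that $\mathbb{T}^1(N)\to\mathbb{T}^1_D(N)$ carries $R^1(N)$ into $R^1_D(N)$, and the Agashe--Ribet--Stein equality for square-free level. Your checks that $A^1_D(N)=A^1(N)$ and $B^1_D(N)\subseteq B^1(N)$ simply spell out the step the paper calls ``easily seen.''
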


\begin{proof}
	
	From the definition, it is clear that $\delta^D(M) \mid r^D(M)=r^1_{D}(N)$ (by Theorem A). Moreover, it can be easily seen that the quotient $\mathbb{T}^1(N) \rightarrow \mathbb{T}^1_D(N)$ maps $R^1(N)$ into $R^1_{D}(N)$. Hence, we have $r^1_{D}(N)\mid r^1(N)$. On the other hand, it is the main result of \cite{Agashe2012} that $r^1(N)=\delta^1(N)$, and thus $\delta^D(M) \mid \delta^1(N)$.
\end{proof}

\begin{rem}
	Corollary~4.1 was proved in Theorem~3.2 of \cite{takahashi2001degrees} using an explicit global formula. However, that proof relies on a result of Bertolini and Darmon which, as noted on page 1364 of \cite{papikian2016optimal}, implicitly assumes the surjectivity of a certain map on the component groups of N\'eron models. In Theorem 9.2, we show that this surjectivity fails in general.
\end{rem}

%% file: Multiplicity_one.tex
\section{Multiplicity One}
The multiplicity one phenomena introduced in the introduction are very helpful in many places. The strategy introduced by Agashe, Ribet, and Stein in \cite{Agashe2012} concludes the equality of the modular degree and the congruence number attached to a rational semistable elliptic curve via multiplicity one for both differentials and the Jacobian. It is natural to look at the Shimura case and the $D$-new subvariety of the Jacobian of the modular curve to see if we have multiplicity one there and then obtain similar results. The purpose of this section is to discuss the known cases of multiplicity one, as well as proving new cases.

\subsection{Known Cases}
Here, we state known multiplicity one results, which will later be specialized to the case of elliptic curves. Let $N=DM$ as before, and let $\mathfrak{m}$ be a maximal ideal of $\mathbb{T}^D(M)$ such that the associated representation $\bar{\rho}_{\mathfrak{m}}$ is irreducible. The irreducibility condition is often needed in the proofs, and it is usually the case for the maximal ideals we are interested in. The following theorem summarizes the important cases of Jacobian multiplicity one in the case $D=1$:

\begin{thm}
	Let $N$ be a square-free natural number. Suppose $\mathfrak{m}$ is a maximal ideal of $\mathbb{T}^1(N)$, of characteristic $l$, and the associated representation $\bar{\rho}_{\mathfrak{m}}$ is absolutely irreducible. In the following cases, dim~$(J^1(N)[\mathfrak{m}])=2$.
	
	\begin{enumerate}[label=(\alph*),start=\intfromalph{a}]
		\item 
		$l$ is odd and does not divide $N$ (\cite{ribet1990modular});
		\item 
		$l$ is odd, divides $N$, and $a_l \neq 0$, where $a_l$ is the reduction mod $l$ of the $l$-th Fourier coefficient of the newform $f$ attached to $\mathfrak{m}$, and $\mathfrak{m}$ is $G_l$-distinguished (see \cite{tilouine1997hecke} for definitions);
		\item
		$l$ divides $N$ and $\mathfrak{m}$ is not $l$-old (\cite{mazur1991two});
		\item 
		$l=2$, $N$ is odd, and $\bar{\rho}_{\mathfrak{m}}$ restricted to a decomposition group at $2$ is not contained within the scalar matrices (\cite{article}).
	\end{enumerate}
\end{thm}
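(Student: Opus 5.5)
This theorem is a compilation of known results, so my plan is to reduce each case to the cited source. Then I would check that the hypotheses there match the ones stated here: $N$ square-free, $\bar{\rho}_{\mathfrak{m}}$ absolutely irreducible, and the constraints on $l$. In every case the key relation is that $J^1(N)[\mathfrak{m}]$ is a finite-dimensional $\mathbb{T}^1(N)/\mathfrak{m}$-vector space with a semisimplification that is a direct sum of copies of $\bar{\rho}_{\mathfrak{m}}$. This follows from Boston--Lenstra--Ribet, using absolute irreducibility together with the Eichler--Shimura relation. Hence $\dim J^1(N)[\mathfrak{m}]=2$ is equivalent to $J^1(N)[\mathfrak{m}]\cong\bar{\rho}_{\mathfrak{m}}$, and it suffices to show that $\bar{\rho}_{\mathfrak{m}}$ occurs with multiplicity one.

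For case (a) I would follow Ribet and Mazur. Since $l\nmid N$ and $l$ is odd, $J^1(N)$ has good reduction at $l$, and Fontaine--Laffaille or Raynaud theory identifies the kernel with a finite flat group scheme over $\mathbb{Z}_l$. The $q$-expansion principle gives multiplicity one for differentials, $\dim \mathrm{Cot}(\mathcal{J})[\mathfrak{m}]\le 1$, via the perfect pairing between $\mathbb{T}$ and $S_2(N,\mathbb{F}_l)$. Mazur's argument then transfers this to the Jacobian. He compares $J[\mathfrak{m}]$ with the Dieudonné module of its reduction, which splits into a $\ker F$ part and a $\ker V$ part, each controlled by differentials. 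This step uses that $l$ is odd, so that $e=1<l-1$ and Raynaud's theorem applies.

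Case (b) is the ordinary, $l\mid N$ situation, which I would take from Tilouine's Gorenstein result in \cite{tilouine1997hecke}. There $a_l\neq 0$ gives ordinarity, and $G_l$-distinguishedness gives a filtration of the $\mathfrak{m}$-part of the Tate module that is free over $\mathbb{T}_{\mathfrak{m}}$ on both graded pieces. Combining this with duality gives $Ta_{\mathfrak{m}}\cong\mathbb{T}_{\mathfrak{m}}^2$, and reducing modulo $\mathfrak{m}$ gives dimension $2$. Case (c) is Mazur--Ribet \cite{mazur1991two}. There the not-$l$-old condition means that the toric part at $l$ controls $J[\mathfrak{m}]$, and multiplicity one follows from the character group being locally free of rank one. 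Case (d) is the Buzzard/Wiese-type result for $l=2$ \cite{article}. There the non-scalar hypothesis at the decomposition group at $2$ replaces the missing Fontaine--Laffaille input, via an analysis of finite flat models over $\mathbb{Z}_2$.

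The main obstacle is case (d) and, to a lesser extent, the transfer from differentials to the Jacobian in case (a). At $l=2$ Raynaud's bound $e<l-1$ fails, so the Dieudonné-module argument breaks down. Non-scalarity is needed to rule out $J[\mathfrak{m}]$ being an extension of $\bar{\rho}$ by itself with unramified, scalar Frobenius action. For the purposes of this paper, however, I would simply invoke the cited references and not re-prove them.
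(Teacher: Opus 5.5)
Your proposal takes the same approach as the paper. The paper gives Theorem 5.1 as a compilation of known multiplicity one results, supported only by the cited sources (Ribet, Tilouine, Mazur--Ribet, and the reference cited for case (d)), so reducing each case to those references is exactly what it does. The Boston--Lenstra--Ribet reformulation and your sketches of the cited arguments do not appear in the paper, and you do not need them, since you also end up deferring to the references.
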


There is a connection between differential and Jacobian multiplicity one. For example, in case (a) of Theorem 3.1, the $q$-expansion principle is used to deduce that $$\mathrm{dim}(H^0(X_0(N)),\Omega^1)[\mathfrak{
	m}]\leq 1,$$ 
which itself is used to conclude that $$\mathrm{dim}(J^1[\mathfrak{
	m}])=2.$$ For the maximal ideals coming from elliptic curves, as crucially used in \cite{Agashe2012}, multiplicity one for differentials is known since $N$ is square-free.

Now, let us mention the result we will use for Jacobian multiplicity one in the Shimura Jacobian case. There are some distinct approaches, but we only focus on the one obtained by David Helm in \cite{helm2007maps}, mainly because we are working over $\mathbb{Q}$, and it suffices for our purposes. However, Helm's result gives a multiplicity bound for a certain new subvariety of the Shimura Jacobian, not the entire Jacobian. In the next section, we modify his argument and obtain a corresponding multiplicity bound for the entire Jacobian under the same assumptions that he imposed. We can state his main result~(Corollary 8.11 of \cite{helm2007maps}) after the following definition:

\begin{Def}
	Let $N=DM$ be a square-free natural number, and let $D$ be a product of an even number of primes. Let $\mathfrak{m}$ be a maximal ideal of $\mathbb{T}^D(M)$ such that the representation $\bar{\rho}_{\mathfrak{m}}$ is irreducible. We say $\mathfrak{m}$ is controllable at a prime $p$ dividing $N$ if either of the following conditions holds:
	\begin{enumerate}
		\item 
		$\bar{\rho}_{\mathfrak{m}}$ is ramified at $p$;
		\item 
		$\bar{\rho}_{\mathfrak{m}}$ is unramified at $p$, $p\neq l$, and $\bar{\rho}_{\mathfrak{m}}(\text{Frob}_p)$ is not a scalar;
		\item 
		$p=l$, and $l \neq 2$;
		\item 
		$p=l=2$, and the restriction of $\bar{\rho}_{\mathfrak{m}}$ to a decomposition group at $2$ is not contained in the scalar matrices.
	\end{enumerate}
\end{Def}  
The above condition ensures that $\mathcal{X}_p(J^{\text{min}}_D(M))$ is locally free (of rank 1) at $\mathfrak{m}$, where $J^{\text{min}}_D(M)$ is a pivotal abelian variety (see Lemma 4.5 of \cite{helm2007maps} for more details) isogenous to $J^D(M)$, which Helm used to prove multiplicity one at $\mathfrak{m}$ for the $M$-new subvariety of $J^D(M)$. 
\begin{thm}[Helm]
	Let $\mathfrak{m}$ be a maximal ideal of $\mathbb{T}^D_M(M)$ such that $\bar{\rho}_{\mathfrak{m}}$ is irreducible. Assume, moreover that theresidue characteristic of $\mathfrak{m}$ is at least $5$. Let $k$ be the number of primes dividing $D$ at which $\mathfrak{m}$ is not controllable. Then,
	\[
	\text{dim}~J^D_M(M)[\mathfrak{m}]\leq 2^k \text{dim}~J^1_N(N)[\mathfrak{m}].
	\] 
	In particular, let $\tilde{\mathfrak{m}}$ be the preimage of $\mathfrak{m}$ under the quotient map $\mathbb{T}^1(N) \rightarrow \mathbb{T}^1_N(N) \cong \mathbb{T}^D_M(M)$. If  $\mathfrak{m}$ is controllable at all primes dividing $D$ and $\mathrm{dim}~J^1(N)[\tilde{\mathfrak{m}}]=2$,
	then $\mathrm{dim}~J^D_M(M)[\mathfrak{m}]=2$.
\end{thm}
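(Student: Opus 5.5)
The first assertion is the substantive one. The ``in particular'' part follows from it at once: if $\mathfrak{m}$ is controllable at every prime dividing $D$, then $k=0$. Moreover, $J^1_N(N)[\mathfrak{m}]$ is contained in $J^1(N)[\tilde{\mathfrak{m}}]$, so it has dimension at most $2$; it is nonzero, and a nonzero $\mathfrak{m}$-torsion of an abelian variety with $\bar\rho_{\mathfrak m}$ irreducible has dimension at least $2$ (it contains a copy of $\bar\rho_{\mathfrak m}$ by Boston--Lenstra--Ribet). So I will focus on the inequality $\dim J^D_M(M)[\mathfrak m]\le 2^k\dim J^1_N(N)[\mathfrak m]$. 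I would prove it by induction on the number of prime factors of $D$, removing two primes $p,q\mid D$ at a time. Each step passes from $J^D_M(M)$ to $J^{D/pq}_{Mpq}(Mpq)$, and I show the multiplicity grows by a factor of at most $2$ for each of $p,q$ at which $\mathfrak m$ is not controllable, and not at all at controllable primes.

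The comparison at a single step goes through character groups and Ribet's exact sequence relating the two Shimura curves. Both $J^D_M(M)$ and $J^{D/pq}_{Mpq}(Mpq)$ have purely toric reduction at $p$ and at $q$, and Ribet's bimodule sequences identify $\mathcal X_p(J^D(M))$ with $\mathcal X_q(J^{D/pq}(Mpq))$ up to old parts. Passing to new quotients, one gets a Hecke-equivariant identification of the character groups $\mathcal X_p(J^D_M(M))_{\mathfrak m}$ and $\mathcal X_q(J^{D/pq}_{Mpq}(Mpq))_{\mathfrak m}$; here the new parts kill the Eisenstein/old contributions, which is where irreducibility of $\bar\rho_{\mathfrak m}$ enters. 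This part is routine.

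Next I would use Grothendieck's monodromy pairing. For an abelian variety $A$ with purely toric reduction at $p$, the $p$-adic uniformization gives an exact sequence $0\to \mathcal X_p(A^\vee)^\vee\otimes\mu \to A[\mathfrak m]\to \mathcal X_p(A)/\mathfrak m\to 0$ locally at $\mathfrak m$ (for $l\ne p$, and with $l\ge5$ ensuring the relevant finite flat group schemes behave). Then
\[
\dim A[\mathfrak m]\le \dim \mathcal X_p(A)/\mathfrak m + \dim \mathcal X_p(A^\vee)/\mathfrak m .
\]
Here I would use Helm's auxiliary variety $J^{\min}_D(M)$ in place of $J^D_M(M)$, since its character groups are controlled. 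Controllability at $p$ (Lemma 4.5 of \cite{helm2007maps}) gives $\mathcal X_p(J^{\min})_{\mathfrak m}$ locally free of rank one over $\mathbb T_{\mathfrak m}$. Combined with the identification above, this transfers the multiplicity exactly from one side to the other. At a non-controllable prime the character group is no longer cyclic, and I would bound its $\mathfrak m$-cotangent dimension by $2$, using that it is a submodule of $\mathbb T_{\mathfrak m}^2$ via Ribet's sequence. That is the source of the factor $2$.

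The main obstacle is relating $J^{\min}$ back to $J^D_M(M)$. Since $\dim(\cdot)[\mathfrak m]$ is not isogeny invariant, I must show that the isogeny $J^{\min}\to J^D_M(M)$ can be chosen so that its kernel has no $\mathfrak m$-part. Equivalently, the $\mathfrak m$-adic Tate modules must agree as $\mathbb T_{\mathfrak m}$-lattices. I would try first to build $J^{\min}$ as a quotient of $J^D_M(M)$ by an $\mathfrak m$-prime-to-$l$ (or Eisenstein-supported) subgroup, using irreducibility of $\bar\rho_{\mathfrak m}$ to exclude $\mathfrak m$-torsion in the kernel. The condition $l\ge5$ should guarantee that the component groups contribute nothing at $\mathfrak m$.
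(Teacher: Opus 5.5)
There is a genuine gap, and it sits in the step you flag as the main obstacle. You want to choose the isogeny $J^{\min}\to J^D_M(M)$ so that its kernel has no $\mathfrak m$-part, i.e.\ so that the $\mathfrak m$-adic Tate modules agree. But $J^{\min}$ is built (Lemma 5.7) so that $J^{\min}[\mathfrak m]$ is two-dimensional for \emph{every} $\mathfrak m$ with $\bar\rho_{\mathfrak m}$ absolutely irreducible. If your step worked, it would give $\dim J^D_M(M)[\mathfrak m]=2$ with no controllability hypothesis and no input from level $N$. That would make both the factor $2^k$ and the comparison with $J^1_N(N)$ superfluous.

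This is false in general: multiplicity two does occur in Jacobians of Shimura curves at non-controllable primes (Ribet's examples in discriminant $pq$ with $\bar\rho_{\mathfrak m}(\mathrm{Frob}_p)$, $\bar\rho_{\mathfrak m}(\mathrm{Frob}_q)$ scalar). Irreducibility of $\bar\rho_{\mathfrak m}$ gives you no way to keep $\mathfrak m$-torsion out of the kernel.

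The rest of the sketch also does not close up. The character groups you actually control are those of $J^{\min}$, via $\dim J^{\min}[\mathfrak m]=2$. Ribet's sequence, by contrast, relates character groups of the actual Jacobians $J^D(M)$ and $J^{D/pq}(pqM)$, and you never say how $\dim J^{D/pq}_{Mpq}(Mpq)[\mathfrak m]$ enters. Moreover, an additive bound $\dim A[\mathfrak m]\le\dim\mathcal X_p(A)/\mathfrak m+\dim\mathcal X_p(A^\vee)/\mathfrak m$ does not produce a multiplicative recursion. Finally, being a submodule of $\mathbb T_{\mathfrak m}^2$ does not bound the number of generators by $2$. The correct reason for $\dim\mathcal X_p(J^{\min})/\mathfrak m\le 2$ is the Mazur-principle argument: the dual of $\mathcal X_p(J^{\min})/\mathfrak m$ is the toric part of the two-dimensional $J^{\min}[\mathfrak m]$.

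The missing idea is Helm's, and it is reproduced in the paper's proof of Proposition 5.5. One \emph{measures} rather than eliminates the discrepancy between $J$ and $J^{\min}$, using the $\mathbb T$-module $[J]=\mathrm{Hom}(J,J^{\min})$. Then $\dim J[\mathfrak m]$ is computed by $\dim[J]/\mathfrak m[J]$, the minimal number of generators of $[J]_{\mathfrak m}$ (Helm's Corollary 8.10, as used in the proof of Corollary 5.6). So multiplicity one for $J$ amounts to cyclicity of $[J]_{\mathfrak m}$. The recursion comes from Helm's factorization (modulo $\mathbb Z$-torsion, with $(L_D)_{\mathfrak m}\cong\mathbb T_{\mathfrak m}$)
\[
[J^D(M)]\cong \mathcal X_p(J^{\min}_D(M))\otimes\mathcal X_q(J^{\min}_D(M))\otimes[J^{D/pq}_{pq}(pqM)]\otimes L_D .
\]
Minimal numbers of generators multiply under tensor products and can only drop on passing to the torsion-free quotient. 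Also, $\mathcal X_p(J^{\min}_D(M))_{\mathfrak m}$ is cyclic when $\mathfrak m$ is controllable at $p$ and $2$-generated otherwise. Hence each step costs a factor of at most $2$ for each non-controllable prime among $p,q$, and induction down to discriminant $1$ gives the bound.

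Your deduction of the ``in particular'' statement is fine, with one correction: the lower bound $\dim\ge 2$ must be applied to $J^D_M(M)[\mathfrak m]$, not to $J^1_N(N)[\mathfrak m]$. That group is nonzero because $\mathbb T^D_M(M)$ acts faithfully on $J^D_M(M)$.
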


\begin{rem}
	In \cite{helm2007maps}, the above theorem is stated for an arbitrary congruence subgroup $\Gamma$ of level $M$, but we only need the case $\Gamma=\Gamma_0(M)$ in this paper.
\end{rem}

In the next section, we will modify his argument and obtain a multiplicity bound for the entire Jacobian $J^D(M)$.

\subsection{New Cases}

We start with the following variation of Theorem 5.3.

\begin{prop}
	
	\begin{enumerate}
		\item 	
		Let $l>3$ be a prime number and let $\mathfrak{m}$ be a maximal ideal of $\mathbb{T}^D(M)$ of characteristic $l$. Assume, moreover, that $\bar{\rho}_{\mathfrak{m}}$ is irreducible. Let $k$ be the number of primes dividing $D$ at which $\mathfrak{m}$ is not controllable. Then for any pair of primes $r$ and $s$ dividing $D$, let $\mathfrak{M}$ be the maximal ideal of $\mathbb{T}^1_{rs}(DM)$ which is the preimage of $\mathfrak{m}$ under the natural quotient map $\mathbb{T}^1_{rs}(DM) \rightarrow \mathbb{T}^1_{D}(DM)\cong\mathbb{T}^D(M)$. Then, $\text{dim}~J^D(M)[\mathfrak{m}]\leq 2^k\text{dim}~J^1_{rs}(DM)[\mathfrak{M}]$. In particular, If $\mathfrak{m}$ is controllable at all primes dividing $D$, $\text{dim}~J^D(M)[\mathfrak{m}]\leq \text{dim}~J^1_{rs}(DM)[\mathfrak{M}]\leq  \text{dim}~J^1(DM)[\mathfrak{M'}]$, where $\mathfrak{M'}$ is the preimage of $\mathfrak{m}$ under the natural quotient map $\mathbb{T}^1(DM) \rightarrow \mathbb{T}^1_{D}(DM)\cong\mathbb{T}^D(M)$;
		\item 
		More generally, let $Q$ be a divisor of $M$, and let $\mathfrak{n}$ be a maximal ideal of $\mathbb T^D_Q(M)$ and let $\tilde{\mathfrak{n}}$ be the preimage of $\mathfrak{n}$ under the natural quotient map $\mathbb{T}^{D/rs}_{Q}(rsM) \rightarrow \mathbb{T}^D_Q(M)$. Under the same assumptions and notation as in part~(1), after replacing $\mathfrak m$ by $\mathfrak n$, $\text{dim}~J^D_Q(M)[\mathfrak{n}]\leq 2^k\text{dim}~J^1_{rsQ}(DM)[\tilde{\mathfrak{n}}]$
	\end{enumerate}
	
\end{prop}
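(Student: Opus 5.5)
The plan is to run Helm's argument (Corollary 8.11 of \cite{helm2007maps}) one discriminant step at a time, rather than all at once from $D$ down to $1$. Write $D=p_1\cdots p_{2t}$. By induction on $t$, it suffices to relate $J^D(M)$ to $J^{D/pq}(pqM)$ for a pair of primes $p,q\mid D$. The step we must justify is
\[
\dim J^D_Q(M)[\mathfrak{n}] \leq 2^{k_{p,q}}\dim J^{D/pq}_{pqQ}(pqM)[\tilde{\mathfrak n}],
\]
where $k_{p,q}\in\{0,1,2\}$ counts the non-controllable primes among $p,q$. Iterating this with $Q$ growing to $rsQ\cdot(\text{other primes})$ lands in $J^1_{D Q}(DM)$. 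That is not yet the target, so we instead stop one step early: we peel off all pairs except the last pair $r,s$. For that last pair we use the Jacquet--Langlands/Ribet isogeny, which identifies the $(D/rs)$-new part of $J^{rs}(D M/rs)$ with $J^1_{D}(DM)$ up to Hecke-equivariant isogeny. This is arranged so that the final comparison is with $J^1_{rsQ}(DM)$, the factor $2^k$ accumulating exactly from the non-controllable primes. Part (1) is the case $Q=1$. Its last inequality follows because $J^1_{rs}(DM)\subseteq J^1(DM)$ is a $\mathbb{T}^1(DM)$-stable subvariety, so its $\mathfrak{M}$-torsion injects into $J^1(DM)[\mathfrak{M}']$.

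For the single step, we follow Helm's mechanism. By Theorem 2.2(2), $J^D_Q(M)$ has purely toric reduction at $p$. By Cerednik--Drinfeld, $\mathcal X_p(J^D(M))$ is identified, Hecke-equivariantly, with the character group at $p$ of the $p$-new part of $J^{D/pq}(pqM)$ in the Ribet exact sequence. We then use Helm's pivotal variety $J^{\min}$ from Lemma 4.5 of \cite{helm2007maps}, which is isogenous to $J^D_Q(M)$ and has the same $\mathfrak n$-multiplicity up to a controlled comparison. Grothendieck's orthogonality / the monodromy pairing gives a $\mathbb T_{\mathfrak n}$-equivariant exact sequence
\[
0\to \mathcal X_p\to \mathcal X_p^\vee\to \Phi_p\to 0 .
\]
Combined with the Tate/Raynaud uniformization $0\to \mathcal X_p^\vee\otimes\mu\to\cdots$, this yields
\[
\dim J[\mathfrak n]\le \dim \mathcal X_p/\mathfrak n+\dim \mathcal X_p^\vee/\mathfrak n .
\]
Here $l>3$ is used so that the component group is Eisenstein away from $\mathfrak n$ and the relevant Ihara-type lemma applies. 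In the other direction, $\mathcal X_p/\mathfrak n$ is bounded by $\mathcal X_q$ of the intermediate curve and ultimately by $J^{D/pq}_{pqQ}(pqM)[\tilde{\mathfrak n}]$. When $p$ (resp.\ $q$) is controllable, $\mathcal X$ is locally free of rank one at $\mathfrak n$ (the remark after Definition 5.2), so there is no loss. When it is not, we only get a bound of twice the rank, which produces the factor $2$.

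The main obstacle is that Helm works only with the $M$-new quotient $\mathbb{T}^D_M(M)$. There all oldforms are removed, and Ribet's sequence relating character groups at $p$ and $q$ is exact with no old contributions. For the full $J^D(M)$, or $J^D_Q(M)$, the level-raising sequence at the intermediate stage involves $p$-old (and $q$-old) pieces $J^{D/pq}(qM)^2$. We must check that these do not alter the $\mathfrak n$-localized character groups. Our first attempt is to note that the Cerednik--Drinfeld character group of $J^D_Q(M)$ only sees the $pq$-new part. Ribet's exact sequence
\[
0\to \mathcal X_q(J^{D/pq}(pqM))\to \mathcal X_p(J^D(M))\to \mathcal X_p(J^{D/pq}(qM))^2\to\cdots
\]
then localizes cleanly at $\tilde{\mathfrak n}$, with the old term contributing only to the image in $J^{D/pq}_{pqQ}$ after restriction to the $pqQ$-new part. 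Hence the requirement that the target be $J^1_{rsQ}(DM)$ rather than a fully new Jacobian. Verifying Helm's Lemma 4.5 (local freeness at controllable primes) in this non-$M$-new setting, uniformly in $Q$, is where the refinement of his argument must be carried out.
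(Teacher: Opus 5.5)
There is a genuine gap. The single-step inequality carries the entire content of the statement, and you never establish it; the devices you propose for it do not work.

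\textbf{Isogenies do not preserve multiplicity.} $\dim J[\mathfrak m]$ is not an isogeny invariant. So passing through the Jacquet--Langlands/Ribet isogeny for the last pair $r,s$ gives no control of multiplicities. The same goes for Cerednik--Drinfeld and Ribet identifications that hold only up to isogeny. This is exactly why Helm introduces the pivotal variety $J^{\min}_D(M)$ and the modules $[J]=\mathrm{Hom}(J,J^{\min}_D(M))$, and reads off $\dim J[\mathfrak m]$ from $\dim [J]/\mathfrak m[J]$ (Corollary 8.10 of \cite{helm2007maps}). Note that $J^{\min}_D(M)$ is not ``of the same multiplicity up to a controlled comparison''; it is the isogenous variety of \emph{minimal} multiplicity $2$.

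The detour was also unnecessary. $J^1_{DQ}(DM)$ is a Hecke-stable subvariety of $J^1_{rsQ}(DM)$, so a bound in terms of its torsion would already imply the stated one.

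\textbf{The additive estimate is not enough.} The estimate $\dim J[\mathfrak n]\le \dim\mathcal X_p/\mathfrak n+\dim\mathcal X_p^\vee/\mathfrak n$ does not by itself give a multiplicative comparison of the form $2^{k_{p,q}}\dim J^{D/pq}_{pqQ}(pqM)[\tilde{\mathfrak n}]$. Your sentence bounding $\mathcal X_p/\mathfrak n$ ``by $\mathcal X_q$ of the intermediate curve'' is where all the work lies, and it is not supplied. Also, Ribet's sequence runs the other way: $\mathcal X_p(J^D(M))$ injects into $\mathcal X_q(J^{D/pq}(pqM))$, with cokernel $\mathcal X_q(J^{D/pq}(qM))^2$.

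\textbf{The old-form issue is left open.} You explicitly leave unresolved the treatment of old contributions outside the $M$-new setting, which you yourself identify as the main obstacle.

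\textbf{What the paper does.} The missing ingredient is Helm's tensor decomposition (Proposition 8.1 of \cite{helm2007maps}):
\[
[J^D(M)]\cong \mathcal X_p(J^{\min}_D(M))\otimes\mathcal X_q(J^{\min}_D(M))\otimes[J^{D/pq}_{pq}(pqM)]\otimes L_D,\qquad (L_D)_{\mathfrak m}\cong\mathbb T^D(M)_{\mathfrak m}.
\]
The paper handles the old-form issue without re-proving local freeness. It uses Helm's Lemma 8.2 to replace $[J^{D/pq}_{pq}(pqM)]$ by $[J^{D/pq}(pqM)]\otimes\mathbb T^D(M)$. Localizing at $\tilde{\mathfrak m}$ then gives
\[
[J^D(M)]_{\mathfrak m}\cong\bigl(\mathcal X_p(J^{\min}_D(M))_{\mathfrak m}\otimes\mathcal X_q(J^{\min}_D(M))_{\mathfrak m}\otimes[J^{D/pq}(pqM)]_{\tilde{\mathfrak m}}\bigr)_{\mathrm{tf}}.
\]
Helm's Lemma 4.5, applied directly to $\mathbb T^D(M)$, gives $\dim J^{\min}_D(M)[\mathfrak m]=2$. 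The Mazur principle then gives $\dim\mathcal X_p(J^{\min}_D(M))_{\mathfrak m}/\mathfrak m\le 2$, with dimension $1$ when $\mathfrak m$ is controllable at $p$, and likewise at $q$. Reducing mod $\mathfrak m$ produces the factor $2^{k_{p,q}}$ multiplicatively.

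The induction therefore runs on the full Jacobians $J^{D/pq}(pqM)$. Only at the final pair $r,s$ is the $rs$-new part kept, which is why $J^1_{rs}(DM)$ appears. Part (2) follows by tensoring with $\mathbb T^D_Q(M)_{\mathfrak n}$.
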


The main result of this section is the following:

\begin{cor}
	\begin{enumerate}
		\item
		Let $l>3$ be a prime not dividing $DM$, and let $\mathfrak m$ be a maximal ideal of $\mathbb T^D(M)$ of characteristic $l$. Assume that $\bar{\rho}_{\mathfrak m}$ is irreducible. Let $k$ denote the number of primes dividing $D$ at which $\mathfrak m$ is not controllable, in the sense of Definition~5.2. Then
		\[
		\dim J^D(M)[\mathfrak m]\leq 2^{k+1}.
		\]
		
		\item
		More generally, let $Q$ be a divisor of $M$, and let $\mathfrak n$ be a maximal ideal of $\mathbb T^D_Q(M)$. Under the same assumptions and notation as in part~(1), after replacing $\mathfrak m$ by $\mathfrak n$,
		\[
		\dim J^D_Q(M)[\mathfrak n]\leq 2^{k+1}.
		\]
	\end{enumerate}
\end{cor}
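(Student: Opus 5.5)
The plan is to deduce the bound directly from Proposition~5.5 by bounding the dimension on the right-hand side through classical multiplicity one on $J^1(DM)$. For part~(1), fix any pair of primes $r,s$ dividing $D$. Such a pair exists when $D>1$; when $D=1$ the statement is just Theorem~5.1(a), since then $k=0$ and the bound $2$ holds. Proposition~5.5(1) gives
\[
\dim J^D(M)[\mathfrak m]\leq 2^k\dim J^1_{rs}(DM)[\mathfrak M],
\]
so it suffices to show $\dim J^1_{rs}(DM)[\mathfrak M]\leq 2$.

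Since $J^1_{rs}(DM)$ is an abelian subvariety of $J^1(DM)$ stable under $\mathbb T^1(DM)$, and $\mathbb T^1(DM)$ acts on it through $\mathbb T^1_{rs}(DM)$, we have the inclusion
\[
J^1_{rs}(DM)[\mathfrak M]\subseteq J^1(DM)[\mathfrak M'],
\]
where $\mathfrak M'$ is the preimage of $\mathfrak M$ (equivalently, of $\mathfrak m$) in $\mathbb T^1(DM)$. The residue fields agree, so the dimensions compare directly. We then apply Theorem~5.1(a) to $\mathfrak M'$. Here $l>3$ is odd and $l\nmid DM$. Moreover, $\bar\rho_{\mathfrak M'}=\bar\rho_{\mathfrak m}$ under the Jacquet--Langlands identification, because the Hecke eigenvalues match via $\mathbb T^D(M)\cong\mathbb T^1_D(DM)$ and Chebotarev applies. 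Hence $\dim J^1(DM)[\mathfrak M']=2$, and the bound $2^{k+1}$ follows.

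\textbf{Main obstacle.} The delicate point is the irreducibility hypothesis. Theorem~5.1 requires $\bar\rho$ to be \emph{absolutely} irreducible, while the Corollary only assumes irreducibility. First, we note that an odd two-dimensional representation (complex conjugation has distinct eigenvalues $\pm1$, and $l$ is odd) which is irreducible over the residue field is absolutely irreducible: a line stable over $\bar{\mathbb F}$ would be an eigenline of complex conjugation and hence defined over the ground field. If this fails, we fall back on Ribet's argument over $\mathbb T/\mathfrak m$, which only needs irreducibility together with the Eichler--Shimura relation. We must also check that the maximal ideal $\mathfrak M'$ is in the support of $J^1(DM)$, which holds because it comes from a Hecke eigensystem occurring there.

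For part~(2), we argue identically using Proposition~5.5(2):
\[
\dim J^D_Q(M)[\mathfrak n]\leq 2^k\dim J^1_{rsQ}(DM)[\tilde{\mathfrak n}]\leq 2^k\dim J^1(DM)[\tilde{\mathfrak n}'],
\]
where $\tilde{\mathfrak n}'$ is the pullback of $\tilde{\mathfrak n}$ to $\mathbb T^1(DM)$. The same application of Theorem~5.1(a) shows that the last dimension is $2$, which gives $\dim J^D_Q(M)[\mathfrak n]\leq 2^{k+1}$.
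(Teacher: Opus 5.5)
Your proposal is correct and follows the paper's proof. Proposition~5.5 reduces the problem to bounding $\dim J^1_{rs}(DM)[\mathfrak M]$ (resp.\ $\dim J^1_{rsQ}(DM)[\tilde{\mathfrak n}]$), and Theorem~5.1(a), applied through the inclusion into $J^1(DM)$, bounds this by $2$; your remark that oddness of $\bar\rho_{\mathfrak m}$ with $l$ odd upgrades irreducibility to absolute irreducibility usefully fills in a point the paper passes over silently when it invokes Theorem~5.1.
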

It is worth pointing out that a similar result is obtained in \cite{manning2021patching}, using the Taylor--Wiles--Kisin patching method, in the case $M=1$ but for an arbitrary totally real number field under some hypotheses~(see Theorem~1.1 of \emph{loc.\ cit.} for the precise statement).

Before proving the proposition, we recall the following crucial lemma appearing in \cite{helm2007maps}, which we use in the proof of the proposition.

\begin{lem}
	There exists an abelian variety $J^{\text{min}}_D(M)$, with a $\mathbb{T}^D(M)$-action, such that $J^D(M)$ and $J^{\text{min}}_D(M)$ are $\mathbb{T}^D(M)$-equivariantly isogenous (over
	$\mathbb{Q}$) with the following property:
	
	If $\mathfrak{m}$ is a maximal ideal of $\mathbb{T}^D(M)$ such that $\bar{\rho}_{\mathfrak{m}}$ is absolutely irreducible, then $J^{\text{min}}_D(M)[\mathfrak{m}]$ is two dimensional over $\mathbb{T}^D(M)/\mathfrak{m}$.
\end{lem}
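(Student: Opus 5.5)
The statement to prove is the lemma just recalled from \cite{helm2007maps}. I would construct $J^{\text{min}}_D(M)$ by changing the $l$-adic Tate lattices of $J^D(M)$ at finitely many primes $l$, using that a $G_{\mathbb{Q}}$- and $\mathbb{T}^D(M)$-stable lattice commensurable with $Ta_l(J^D(M))$ corresponds to a $\mathbb{T}^D(M)$-equivariant isogeny defined over $\mathbb{Q}$. Write $\mathbb{T}=\mathbb{T}^D(M)$ and $J=J^D(M)$.

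\textbf{Step 1: generic freeness.} By the Jacquet--Langlands correspondence and Eichler--Shimura theory, $H_1(J(\mathbb{C}),\mathbb{Q})$ is free of rank $2$ over $\mathbb{T}\otimes\mathbb{Q}$. Indeed $\mathbb{T}\otimes\mathbb{Q}$ is a product of Hecke fields, and each newform contributes a two-dimensional piece over its field.

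\textbf{Step 2: reduction to finitely many primes.} The ring $\mathbb{T}$ is an order in the product of number fields $\mathbb{T}\otimes\mathbb{Q}$. Hence there is a finite set $\Sigma$ of primes outside of which $\mathbb{T}\otimes\mathbb{Z}_l$ is a product of discrete valuation rings. For $l\notin\Sigma$, the torsion-free finitely generated module $H_1(J,\mathbb{Z})\otimes\mathbb{Z}_l\cong Ta_l(J)$ over such a ring is locally free. It then has rank $2$ by Step 1. Therefore, for every maximal ideal $\mathfrak{m}$ of residue characteristic $l\notin\Sigma$, we get $Ta_{\mathfrak m}(J)\cong\mathbb{T}_{\mathfrak m}^2$, and so $\dim J[\mathfrak m]=2$. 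No modification is needed at these primes.

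\textbf{Step 3: the bad primes.} Fix $l\in\Sigma$ and decompose $Ta_l(J)=\prod_{\mathfrak m\mid l}Ta_{\mathfrak m}(J)$. If $\bar\rho_{\mathfrak m}$ is absolutely irreducible, the traces of Frobenius on $V_{\mathfrak m}=Ta_{\mathfrak m}(J)\otimes\mathbb{Q}$ are the images of $T_q$, which lie in $\mathbb{T}_{\mathfrak m}$. Carayol's lemma then gives a representation $\rho:G_{\mathbb{Q}}\to GL_2(\mathbb{T}_{\mathfrak m})$ with these traces. By Step 1 and Chebotarev together with Brauer--Nesbitt, applied factor by factor over $\mathbb{T}_{\mathfrak m}\otimes\mathbb{Q}$, the module $\mathbb{T}_{\mathfrak m}^2\otimes\mathbb{Q}$ with this action is isomorphic to $V_{\mathfrak m}$. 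Transporting $\mathbb{T}_{\mathfrak m}^2$ gives a $G_{\mathbb{Q}}$- and $\mathbb{T}$-stable lattice $L_{\mathfrak m}\subset V_{\mathfrak m}$ with $L_{\mathfrak m}/\mathfrak m L_{\mathfrak m}$ two-dimensional. After scaling by a power of $l$ we may assume $L_{\mathfrak m}\subseteq Ta_{\mathfrak m}(J)$. For residually reducible $\mathfrak m$ we keep $Ta_{\mathfrak m}(J)$.

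\textbf{Step 4: gluing.} The product of these lattices over $\mathfrak m\mid l$ is a finite-index $G_{\mathbb{Q}}\times\mathbb{T}$-stable sublattice of $Ta_l(J)$. It corresponds to a finite $\mathbb{T}$-stable subgroup scheme $C_l\subset J[l^n]$ defined over $\mathbb{Q}$, and $J$ maps to $J/C_l$. Doing this simultaneously for the finitely many $l\in\Sigma$, we set $J^{\text{min}}_D(M)=J/\sum_{l\in\Sigma}C_l$. This quotient carries an induced $\mathbb{T}$-action and is $\mathbb{T}$-equivariantly isogenous to $J$. Its $\mathfrak m$-adic Tate module is $L_{\mathfrak m}$ for $\mathfrak m$ above $\Sigma$ and is unchanged elsewhere. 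Since $J^{\text{min}}_D(M)[\mathfrak m]$ is the Pontryagin-type dual of $L_{\mathfrak m}/\mathfrak m L_{\mathfrak m}$, it is two-dimensional for every absolutely irreducible $\mathfrak m$.

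The main obstacle is Step 3, namely identifying the Carayol lattice with a lattice in the actual Galois module $V_{\mathfrak m}$. This needs $V_{\mathfrak m}$ to be free of rank $2$ over $\mathbb{T}_{\mathfrak m}\otimes\mathbb{Q}$ with Frobenius traces in $\mathbb{T}_{\mathfrak m}$, i.e.\ the Eichler--Shimura relation on the Shimura curve, where I would invoke Jacquet--Langlands. A secondary point is checking that the finite-index condition in Step 2 really isolates only finitely many primes.
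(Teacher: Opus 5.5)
Your overall strategy is sound, but the last sentence of Step 4 is false, and this breaks the argument exactly at the primes in $\Sigma$. The strategy is: leave alone the primes where $\mathbb T\otimes\mathbb Z_l$ is a product of DVRs; at the finitely many others, swap $Ta_{\mathfrak m}$ for a Carayol lattice and quotient $J$ by the corresponding finite subgroup. The trouble is the duality claim. If $A$ is an abelian variety whose $\mathfrak m$-adic Tate module is $L$, then
\[
A[\mathfrak m]=(L\otimes_{\mathbb Z_l}\mathbb Q_l/\mathbb Z_l)[\mathfrak m]=(L/lL)[\mathfrak m].
\]
Its Pontryagin dual is $L^\ast/\mathfrak m L^\ast$, where $L^\ast=\mathrm{Hom}_{\mathbb Z_l}(L,\mathbb Z_l)$ carries the transposed $\mathbb T$-action. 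By the Weil pairing this is $Ta_{\mathfrak m}(A^\vee)/\mathfrak m$, not $L/\mathfrak m L$.

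With your choice $L_{\mathfrak m}\cong\mathbb T_{\mathfrak m}^2$ you get
\[
J^{\mathrm{min}}_D(M)[\mathfrak m]\cong\bigl((\mathbb T_{\mathfrak m}/l)[\mathfrak m]\bigr)^2 .
\]
The socle of $\mathbb T_{\mathfrak m}/l$ is one-dimensional if and only if $\mathbb T_{\mathfrak m}$ is Gorenstein. In Step 2 this causes no harm, because there $\mathbb T_{\mathfrak m}$ is a DVR. But $\Sigma$ is precisely where $\mathbb T_{\mathfrak m}$ may fail to be regular, and Gorensteinness is not automatic there. Kilford's examples at $l=2$ in prime level (so $D=1$) give non-Gorenstein $\mathbb T_{\mathfrak m}$ with $\bar\rho_{\mathfrak m}$ irreducible, and for these your variety has $\dim J^{\mathrm{min}}_D(M)[\mathfrak m]\geq 4$. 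What your construction actually proves is that $Ta_{\mathfrak m}$ is free, which is multiplicity one for the dual variety, not for $J^{\mathrm{min}}_D(M)$ itself.

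The repair is to make the dual lattice free rather than the lattice itself. Apply Carayol's theorem to the contragredient $V_{\mathfrak m}^\ast=\mathrm{Hom}_{\mathbb Q_l}(V_{\mathfrak m},\mathbb Q_l)$ with the transposed $\mathbb T$-action. Its hypotheses hold:
\begin{itemize}
\item It is free of rank $2$ over $\mathbb T_{\mathfrak m}\otimes\mathbb Q$. Each factor $K_\lambda$ is finite over $\mathbb Q_l$, and the trace form identifies $\mathbb Q_l$-duals with $K_\lambda$-duals.
\item Its residual representation $\bar\rho_{\mathfrak m}^\vee$ is absolutely irreducible.
\item Its traces $\mathrm{tr}(g)\chi_l(g)^{-1}$ lie in $\mathbb T_{\mathfrak m}$, because $\det V_{\mathfrak m}=\chi_l$.
\end{itemize}
Take a $G_{\mathbb Q}$-stable lattice $L'\cong\mathbb T_{\mathfrak m}^2$ in $V_{\mathfrak m}^\ast$, and set $L_{\mathfrak m}=\{v\in V_{\mathfrak m}:\varphi(v)\in\mathbb Z_l \text{ for all } \varphi\in L'\}$. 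This lattice is $G_{\mathbb Q}$- and $\mathbb T$-stable with $L_{\mathfrak m}^\ast=L'$. After your Step 4 quotient, the $\mathfrak m$-torsion is dual to $L'/\mathfrak m L'$, hence two-dimensional. Equivalently, run your construction on $J^\vee$ and take the dual of the resulting isogeny.

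One minor correction to Step 1: $\mathbb T$ contains $U_p$ for $p\mid M$, so the factors of $\mathbb T\otimes\mathbb Q$ correspond to $U_p$-refined eigensystems, not just to newforms of level $N$. Freeness of rank $2$ still holds. With these changes your argument proves the lemma. The paper itself does not reprove it; it quotes Helm's Lemma 4.5.
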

\begin{proof}
	See Lemma 4.5 of \cite{helm2007maps}.
\end{proof}

With the abelian variety $J^{\text{min}}_D(M)$ as in above lemma, let $J$ be any abelian variety over $\mathbb{Q}$ with a $\mathbb{T}^D(M)$-action, such that there exists a $\mathbb{T}^D(M)$-equivariant isogeny $\phi : J \rightarrow J^{\text{min}}_D(M) $ defined
over $\mathbb{Q}$. Let $[J]$ denote the $\mathbb{T}^D(M)$-module Hom($J$, $J^{\text{min}}_D(M))$. Also, let ${X}_p(J^{min}_{D}(M))$ be the character group of the torus appearing in the N\'eron model of $J^{\text{min}}_D(M)$ over $\mathbb{Z}_p$.

\begin{proof}[Proof of Proposition 5.5]
	Throughout this proof, tensor products are understood in the convention of \cite{helm2007maps}, i.e. the ordinary tensor product modulo its $\mathbb Z$-torsion. To make this convention explicit in formula~(6), if $P$ denotes the ordinary tensor product, we write $P_{\mathrm{tf}}=P/P_{\mathbb Z\text{-tors}}$.
	\begin{enumerate}
		\item 
		Following the proof of Corollary 8.10 of \cite{helm2007maps}, it is enough to prove that $$\text{dim}~[J^D(M)]/\mathfrak{m}[J^D(M)]\leq 2^k \text{dim}~[J^1_{rs}(DM)]/\mathfrak{M}[J^1_{rs}(DM)].$$ as in Proposition 8.1 of \cite{helm2007maps}. Write $D'=D/rs$. By Proposition 8.1 of $loc.~cit.$, there is a $\mathbb{T}^D(M)$-module $L_D$ such that ${(L_D)}_{\mathfrak{m}} \cong \mathbb{T}^D(M)_{\mathfrak{m}}$, and that if $p$ and $q$ divide $D'$, 
		\begin{equation}
			[J^D(M)] \cong \mathcal{X}_p(J^{min}_{D}(M)) \otimes \mathcal{X}_q(J^{min}_D(M)) \otimes [J^{D/pq}_{pq}(pqM)] \otimes L_D.
		\end{equation}
		
		Here, instead of following of Proposition~8.4 of \emph{loc.\ cit.} and tensoring both sides of~(5) with $\mathbb{T}^1_N(N)$, we use the identification of Lemma~8.2 of \emph{loc.\ cit.} to replace
		\[
		[J^{D/pq}_{pq}(pqM)]
		\]
		by
		\[
		[J^{D/pq}(pqM)] \otimes \mathbb{T}^{D}(N).
		\]
		This formulation makes the underlying $\mathbb{T}^{D}(M)$-module structure more transparent. Similar to Helm's construction, there is an abelian variety $J^{min}_{D/pq}(pqM)$ isogenous to $[J^{D/pq}(pqM)$ with all properties proved in $loc.~ cit.$ In particular, the locally free (of rank 1) module 
		$L_D$ is replaced by the analogous one for discriminant $D/pq$, denoted by $L_{D/pq}$. Now, Let $\tilde{\mathfrak{m}}$ be the preimage of $\mathfrak{m}$ under the natural quotient map $\mathbb{T}^{D/pq}(pqM) \cong \mathbb{T}^{1}_{D/pq}(DM) \rightarrow \mathbb{T}^{1}_D(DM)\cong \mathbb{T}^D(M)$. Following the proof of Proposition 8.1 of $loc.~ cit.$, we obtain a formula relating $[J^D(M)]$ to $[J^{D/pq}(pqM)]$ locally at $\tilde{\mathfrak{m}}$ as follows: (tensoring over $\mathbb{T}^{D/pq}(M)$)
		\begin{align*}
			[J^D(M)]_{\tilde{\mathfrak m}}
			&\cong
			\Bigl(
			\mathcal X_p(J_D^{\min}(M))
			\otimes
			\mathcal X_q(J_D^{\min}(M))
			\otimes
			[J^{D/pq}_{pq}(pqM)]
			\otimes
			L_D
			\Bigr)_{\tilde{\mathfrak m}}
			\\
			&\cong
			\Bigl(
			\mathcal X_p(J_D^{\min}(M))
			\otimes
			\mathcal X_q(J_D^{\min}(M))
			\otimes
			[J^{D/pq}(pqM)]
			\otimes
			\mathbb T^{D}(M)
			\otimes
			L_D
			\Bigr)_{\tilde{\mathfrak m}}
			\\
			&\cong
			\mathcal X_p(J_D^{\min}(M))_{\tilde{\mathfrak m}}
			\otimes
			\mathcal X_q(J_D^{\min}(M))_{\tilde{\mathfrak m}}
			\otimes
			[J^{D/pq}(pqM)]_{\tilde{\mathfrak m}}
			\otimes
			\mathbb T^{D}(M)_{\tilde{\mathfrak m}}
			\otimes
			(L_D)_{\tilde{\mathfrak m}}
			\\
			&\cong
			\mathcal X_p(J_D^{\min}(M))_{\mathfrak m}
			\otimes
			\mathcal X_q(J_D^{\min}(M))_{\mathfrak m}
			\otimes
			[J^{D/pq}(pqM)]_{\tilde{\mathfrak m}}.
		\end{align*}
		In the last step we have used that
		$\mathcal X_p(J_D^{\min}(M))$,
		$\mathcal X_q(J_D^{\min}(M))$,
		and $L_D$
		are naturally $\mathbb T^{D}(M)$-modules, so the action of
		$\mathbb T^{D/pq}(pqM)$ on these modules factors through the quotient
		$\mathbb T^{D}(M)$. We have also used the isomorphism $(L_D)_{\mathfrak m}\cong \mathbb T^{D}(M)_{\mathfrak m}$. Hence,
		\begin{equation}
			[J^D(M)]_{\mathfrak{m}} \cong \Bigl(\mathcal{X}_p(J^{min}_{D}(M))_{\mathfrak{m}} \otimes \mathcal{X}_q(J^{min}_D(M))_{\mathfrak{m}} \otimes [J^{D/pq}(pqM)]_{\tilde{\mathfrak{m}}}\Bigr)_{\mathrm{tf}}.
		\end{equation}
		
		Applying a standard Mazur principle argument, since $\text{dim}~J^{\text{min}}_D(M)[\mathfrak{m}]=2$, we have $$\text{dim}~\mathcal{X}_p(J^{min}_{D}(M))_{\mathfrak{m}}/\mathfrak{m}\mathcal{X}_p(J^{min}_{D}(M))_{\mathfrak{m}} \leq 2,$$ and this dimension is $1$ if $\mathfrak{m}$ is controllable at $p$ (see Lemma 6.5 of \cite{helm2007maps}, for instance), and similarly for $q$. Since passing from the ordinary tensor product to its quotient modulo its $\mathbb Z$-torsion can only decrease the dimension after reduction modulo $\mathfrak m$, applying this to formula (6) above, we obtain
		
		\begin{equation}
			\text{dim}~ [J^D(M)]_{\mathfrak{m}}/\mathfrak{m}[J^D(M)]_{\mathfrak{m}}\leq 2^2 ~\text{dim}~ [J^{D/pq}(pqM)]_{\tilde{\mathfrak{m}}}/\tilde{\mathfrak{m}}[J^{D/pq}(pqM)]_{\tilde{\mathfrak{m}}}.
		\end{equation}
		
		Now, $J^{D/pq}(pqM)$ in formula (7) above has discriminant $D/pq$ and we can repeat the same argument to bound $\text{dim}~ [J^{D/pq}(pqM)]_{\tilde{\mathfrak{m}}}/\tilde{\mathfrak{m}}[J^{D/pq}(pqM)]_{\tilde{\mathfrak{m}}}$. Hence, inductively, we obtain 
		
		\[
		\text{dim}~ [J^D(M)]_{\mathfrak{m}}/\mathfrak{m}[J^D(M)]_{\mathfrak{m}}\leq 2^k \text{dim}~ [J^{1}_{rs}(DM)]_{\mathfrak{M}}/\mathfrak{M}[J^{1}_{rs}(DM)]_{\mathfrak{M}}.
		\]
		The inequality $\text{dim}~J^1_{rs}(DM)[\mathfrak{M}]\leq \text{dim}~J^1(DM)[\mathfrak{M'}]$ follows immediately as $J^1_{rs}(DM)$ is a subvariety of $J^1(DM)$.
		\item
		Let $\mathfrak m$ be the preimage of $\mathfrak n$ under the natural quotient map
		\[
		\mathbb T^D(M)\rightarrow \mathbb T^D_Q(M).
		\]
		Tensoring formula~(6) of part~(1) over
		$\mathbb T^{D/pq}(pqM)_{\tilde{\mathfrak m}}$
		with $\mathbb T^D_Q(M)_{\mathfrak n}$, and using Lemma~8.2 and Corollary~5.3 of \cite{helm2007maps}, we obtain
		\[
		[J^D_Q(M)]_{\mathfrak n}
		\cong
		\Bigl(
		\mathcal X_p(J^{\min}_{DQ}(M/Q))_{\mathfrak n}
		\otimes
		\mathcal X_q(J^{\min}_{DQ}(M/Q))_{\mathfrak n}
		\otimes
		[J^{D/pq}_{Q}(pqM)]_{\tilde{\mathfrak m}}
		\Bigr)_{\mathrm{tf}}.
		\]
		The remainder of the proof is entirely analogous to that of part~(1), and we leave the details to the reader.
	\end{enumerate}
\end{proof}

We now prove Corollary 5.6.

\begin{proof}[Proof of Corollary 5.6]
	
	\begin{enumerate}
		\item Let $\mathfrak{M}$ and ${\mathfrak{M'}}$ be the preimages of $\mathfrak{m}$ under natural quotient maps $\mathbb{T}^1_{rs}(DM) \rightarrow \mathbb{T}^1_{D}(DM)\cong\mathbb{T}^D(M)$ and $\mathbb{T}^1(DM) \rightarrow \mathbb{T}^1_{D}(DM)\cong\mathbb{T}^D(M)$, respectively. One has $$\text{dim}~ J^D(M)[\mathfrak{m}]=\text{dim}~[J^D(M)]/\mathfrak{m}[J^D(M)]$$ and $$\text{dim}~ J^1_{rs}(DM)[\mathfrak{M}]=\text{dim}~[J^1_{rs}(DM)]/\mathfrak{M}[J^1_{rs}(DM)]$$ (see Corollary 8.10 of \cite{helm2007maps}). Proposition 5.5 gives
		\[
		\text{dim}~[J^D(M)]/\mathfrak{m}[J^D(M)]
		\leq
		2^k \text{dim}~[J^1_{rs}(DM)]/\mathfrak{M}[J^1_{rs}(DM)].
		\]
		By Theorem 5.1 (this is where we use the assumption that $l$ is odd and it is prime to $DM$),
		\[
		\text{dim}~J^{1}_{rs}(DM)[\mathfrak{M}]=2.
		\]
		Hence
		\[
		\text{dim}~J^D(M)[\mathfrak m]\leq 2^{k+1}.
		\]
		
		\item The proof of this part is essentially the same as that of the first part. Proposition 5.5 gives
		\[
		\text{dim}~J^D_Q(M)[\mathfrak n]
		\leq
		2^k\text{dim}~J^1_{rsQ}(DM)[\tilde{\mathfrak n}],
		\]
		and Theorem 5.1 gives
		\[
		\text{dim}~J^1_{rsQ}(DM)[\tilde{\mathfrak n}]=2.
		\]
		Therefore
		\[
		\text{dim}~J^D_Q(M)[\mathfrak n]\leq 2^{k+1}.
		\]
	\end{enumerate}
\end{proof}

Before ending this section, let
\[
\Phi:J^D(M)\rightarrow J^1_D(N)
\]
be a $\mathbb{T}^D(M)$-equivariant isogeny as in the introduction. Here, we use an \'etaleness condition on the kernel of the induced map $\tilde{\Phi}: \mathcal{J}^D(M) \rightarrow {\mathcal{J}^1_D(N)}$ on the N\'eron models at $l$, for any prime number $l$, to obtain the following multiplicity one result. Let $\Phi^{\vee}$ be the dual of $\Phi$ in the category of abelian varieties over $\mathbb{Q}$, and let $\tilde{\Phi}$ be the induced map on the N\'eron models at $l$.

\begin{prop}
	Let $\mathfrak{m}$ be the maximal ideal of $\mathbb{T}^D(M)$ defining the Gal($\bar{\mathbb{Q}}/\mathbb{Q}$)-representation $\bar{\rho}(\mathfrak{m})$ attached to $A^D(M)[l]$. Assume, moreover, that $\mathfrak{m}$ is non-Eisenstein, and that $l$ is odd.
	\begin{enumerate}[label=(\alph*),start=\intfromalph{a}]
		\item 
		
		If the kernel of $\tilde{\Phi}^{\vee}$ is locally \'etale at $\mathfrak{m}$, then, $\text{Tan}_{\mathbb{Z}_l}(\mathcal{J}^D(M))_{\mathfrak{m}}\cong \mathbb{T}^D(M)_{\mathfrak{m}}$;
		
		\item 
		If $\text{Tan}_{\mathbb{Z}_l}(\mathcal{J}^D(M))_{\mathfrak{m}}\cong \mathbb{T}^D(M)_{\mathfrak{m}}$, and $l$ does not divide $DM$, then
		dim~$J^D(M)(\bar{\mathbb{Q}})[\mathfrak{m}]=2$;
		
		\item 
		If $\text{Tan}_{\mathbb{Z}_l}(\mathcal{J}^D(M))_{\mathfrak{m}}\cong \mathbb{T}^D(M)_{\mathfrak{m}}$, $l$ divides $M$, and $\mathfrak{m}$ is not $l$-old, then dim~$J^D(M)[\mathfrak{m}]=2$;
		
		\item 
		If $\text{Tan}_{\mathbb{Z}_l}(\mathcal{J}^D(M))_{\mathfrak{m}}\cong \mathbb{T}^D(M)_{\mathfrak{m}}$, and if $l$ divides $D$, then dim~$J^D(M)[\mathfrak{m}]=2$.
	\end{enumerate}
\end{prop}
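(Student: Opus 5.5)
For (a), I would transport multiplicity one for differentials from the $D$-new quotient of $J_0(N)$ to $J^D(M)$ along the isogeny. Since $N$ is square-free, the $q$-expansion principle gives $\mathrm{Tan}_{\mathbb{Z}_l}(\mathcal{J}^1(N))_{\mathfrak{M'}}\cong \mathbb{T}^1(N)_{\mathfrak{M'}}$ (Mazur's argument, as used in \cite{Agashe2012}). Passing to the $D$-new quotient $J^1(N)\to (J^1_D(N))^\vee$, I then expect $\mathrm{Tan}_{\mathbb{Z}_l}(\mathcal{J}^1_D(N))_{\mathfrak{m}}$ to be free of rank one over $\mathbb{T}^1_D(N)_{\mathfrak{m}}\cong\mathbb{T}^D(M)_{\mathfrak{m}}$, after using the self-duality of $J^1_D(N)$ up to an $\mathfrak{m}$-local isomorphism for non-Eisenstein $\mathfrak{m}$. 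This step follows the standard Agashe--Ribet--Stein argument: the cotangent space of the optimal quotient injects into that of $J^1(N)$ with torsion-free cokernel at $l$ odd, and it is a quotient of $\mathbb{T}_{\mathfrak{m}}$ that is faithful, hence free of rank one.

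Next, I use the isogeny $\Phi^\vee$, or equivalently $\Phi$, between $J^D(M)$ and $J^1_D(N)$. If $\ker\tilde{\Phi}^\vee$ is étale locally at $\mathfrak{m}$, then the induced map on Lie algebras of the Néron models over $\mathbb{Z}_l$ is an isomorphism after localizing at $\mathfrak{m}$. The reason is that a finite flat étale kernel contributes nothing to the tangent space, and the Néron models are related by an isogeny of smooth group schemes, so the map on tangent spaces is injective with cokernel controlled by the connected part of the kernel. This map is Hecke-equivariant, so the rank-one freeness transfers and gives (a). \textbf{The main obstacle} is this step: justifying the isomorphism of tangent spaces at $l$ when $l\mid N$, where the Néron models are only semi-abelian. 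There I would use that for $l$ odd the identity components of Néron models are preserved under isogenies with étale kernel (Raynaud, $e<l-1$).

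For (b), (c) and (d), I would pass from the tangent space to $J[\mathfrak{m}]$ following Mazur and Ribet. Freeness of $\mathrm{Tan}$ gives $\dim \mathrm{Cot}_{\mathbb{F}_l}[\mathfrak{m}]\le 1$. Then $J^D(M)[\mathfrak{m}]$, viewed as a finite flat group scheme over $\mathbb{Z}_l$, has dimension determined by its connected and étale parts, in the manner of Proposition 3.4.

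In case (b), with $l\nmid DM$, there is good reduction. Raynaud's theorem embeds $\mathrm{Hom}$ of the $\mu$-type part into the cotangent space, so the $\mu$-part has dimension at most $1$. Duality, through the Cartier pairing and the Weil pairing on $J[\mathfrak{m}]$, bounds the étale part by the same quantity. This yields $\dim J[\mathfrak{m}]\le 2$, and the dimension is at least $2$ because $\bar\rho_{\mathfrak{m}}$ is irreducible and occurs.

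In case (c), with $l\mid M$ and $\mathfrak{m}$ not $l$-old, I would mimic Mazur's argument, Theorem 5.1(c). The toric part controls $J[\mathfrak{m}]$, since $\mathfrak{m}$ is not $l$-old so the abelian part contributes nothing. The finite part is then bounded by the character group, and the character group in turn is bounded through the cotangent space, which is locally free by (a). In case (d), with $l\mid D$, the reduction is purely toric, so $\mathrm{Cot}$ is identified with $\mathcal{X}_l\otimes\mathbb{F}_l$. Freeness therefore gives multiplicity one for characters. Then the exact sequence $0\to T[\mathfrak{m}]\to J[\mathfrak{m}]\to \mathcal{X}^\vee$-part, together with the Grothendieck monodromy pairing, yields $\dim J[\mathfrak{m}]=2$.
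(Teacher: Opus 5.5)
\textbf{Verdict: genuine gap in part (b).} Your part (a) follows the paper's route: freeness for the $D$-new quotient of $J^1(N)$ from the Agashe--Ribet--Stein $q$-expansion argument, transported along $\tilde{\Phi}^{\vee}$ because an étale kernel contributes nothing to cotangent spaces. Part (b), however, does not go through as written.

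You bound $\dim J^D(M)[\mathfrak{m}]$ by splitting the finite flat prolongation $\mathcal{G}$ of $J^D(M)[\mathfrak{m}]$ over $\mathbb{Z}_l$ into a $\mu$-type part and an étale part. This tacitly assumes that the connected part of $\mathcal{G}$ is of multiplicative type, i.e.\ the ordinary case. If $A^D(M)$ has supersingular reduction at $l$, then $\bar{\rho}_{\mathfrak{m}}|_{I_l}$ is given by the level-two fundamental characters and $\mathcal{G}$ is local-local. Its $\mu$-part and étale part are both zero, so your two bounds hold trivially and say nothing about $\dim J^D(M)[\mathfrak{m}]$. This case genuinely occurs, and it is why the paper, following Ribet's Theorem~5.2(b), uses Dieudonné theory. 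The missing argument runs as follows.
\begin{itemize}
\item For $G$ over $\mathbb{F}_l$ killed by $l$, the inclusion $\operatorname{im}V\subseteq\ker F$ on the Dieudonné module gives $\#G\le l^{\dim\omega_G+\dim\omega_{G^{\vee}}}$.
\item By Oda, the Dieudonné module of $\mathcal{J}^D(M)[l]_{\mathbb{F}_l}$ is $H^1_{\mathrm{dR}}$, with $\ker F=H^0(\Omega^1)=\mathrm{Cot}_{\mathbb{F}_l}(\mathcal{J}^D(M))$. Hence the quotient $\mathcal{J}^D(M)[l]/\mathfrak{m}\mathcal{J}^D(M)[l]$, which is the Cartier dual of $\mathcal{G}$ via the Weil pairing and the principal polarization, has cotangent space of dimension $\dim\mathrm{Cot}_{\mathbb{F}_l}(\mathcal{J}^D(M))[\mathfrak{m}]$. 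This is exactly the quantity that $\mathrm{Tan}_{\mathbb{Z}_l}(\mathcal{J}^D(M))_{\mathfrak{m}}\cong\mathbb{T}^D(M)_{\mathfrak{m}}$ controls.
\item Raynaud's full faithfulness ($e=1<l-1$), together with $\bar{\rho}_{\mathfrak{m}}\cong\bar{\rho}_{\mathfrak{m}}^{\vee}(1)$, gives $\mathcal{G}\cong\mathcal{G}^{\vee}$. So the two cotangent dimensions agree, and $\dim J^D(M)[\mathfrak{m}]\le 2$.
\end{itemize}
Passing to the Cartier dual matters even in the ordinary case. The surjection $\mathrm{Cot}\to\omega_{\mathcal{G}}$ induced by $\mathcal{G}\hookrightarrow\mathcal{J}^D(M)$ only bounds $\omega_{\mathcal{G}}$ by $\mathrm{Cot}/\mathfrak{m}\mathrm{Cot}$. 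Your hypothesis controls $\mathrm{Cot}[\mathfrak{m}]$ instead, and the two agree only when $\mathbb{T}^D(M)_{\mathfrak{m}}$ is Gorenstein.

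The same socle/cosocle confusion affects (d). With purely toric reduction, $\mathrm{Cot}_{\mathbb{F}_l}(\mathcal{J}^D(M))=\mathcal{X}_l(J^D(M))\otimes\mathbb{F}_l$. So the hypothesis gives $\dim(\mathcal{X}_l/l\mathcal{X}_l)[\mathfrak{m}]=1$, not multiplicity one for characters $\dim\mathcal{X}_l/\mathfrak{m}\mathcal{X}_l=1$. In the sequence $0\to T[\mathfrak{m}]\to J^D(M)[\mathfrak{m}]\to(\Lambda/l\Lambda)[\mathfrak{m}]$, where $\Lambda\cong\mathcal{X}_l$ is the period lattice, this controls only the right-hand term. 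To conclude you must also use the local structure at $l$: $I_l$ acts on $T[\mathfrak{m}]$ through $\overline{\chi}_l\neq 1$ and trivially on the quotient. Hence when $J^D(M)[\mathfrak{m}]\cong\bar{\rho}_{\mathfrak{m}}^{\,r}$ both pieces have dimension exactly $r$, and $r\le 1$.

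In (a), the appeal to self-duality of $J^1_D(N)$ is unjustified (cf.\ Remark~3.2) and in any case unnecessary. The hypothesis concerns $\tilde{\Phi}^{\vee}\colon\mathcal{J}^1_D(N)^{\vee}\to\mathcal{J}^D(M)$, and the quotient $J^1(N)\to J^1_D(N)^{\vee}$ already gives $\mathrm{Tan}_{\mathbb{Z}_l}(\mathcal{J}^1_D(N)^{\vee})_{\mathfrak{m}}\cong\mathbb{T}^D(M)_{\mathfrak{m}}$, exactly as in the paper. Likewise, $\Phi$ and $\Phi^{\vee}$ are not interchangeable here, since Cartier duality exchanges étale and multiplicative kernels.
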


\begin{proof}
	\begin{enumerate}[label=(\alph*),start=\intfromalph{a}]

		\item 
		In the category of group schemes over $\mathbb{Z}_l$, the morphism $\tilde{\Phi}$ on N\'eron models induces an exact sequence of free $\mathbb{Z}_l$-modules
		$$ 
		\text{Cot}_{\mathbb{Z}_l}(\mathcal{J}^D(M))
		\rightarrow 
		\text{Cot}_{\mathbb{Z}_l}({{{\mathcal{J}^1_D(N)}}}^{\vee})\rightarrow 	\text{Cot}_{\mathbb{Z}_l}(\mathrm{Ker}(\tilde{\Phi}^{\vee})).
		$$
		If $\tilde{\Phi}^{\vee}_{\mathfrak{m}}$ denotes the map induced from $\Phi$ by base changing to $\mathbb{T}^D(M)_{\mathfrak{m}}$, it follows from basic properties of group schemes that $\mathrm{Ker}(\tilde{\Phi}^{\vee}_{\mathfrak{m}})= \mathrm{Ker}(\tilde{\Phi}^{\vee}) \times_{\mathrm{Spec}(\mathbb{Z}_p)}\mathrm{Spec}(\mathbb{T}^D(M)_{\mathfrak{m}})$. By the \'etale assumption at $\mathfrak{m}$, $\text{Cot}_{\mathbb{F}_l}(\mathrm{Ker}(\tilde{\Phi}^{\vee}_{\mathfrak{m}}))=0$. But, by the base change compatibility of the cotangent functor, $\text{Cot}_{\mathbb{F}_l}(\mathrm{Ker}(\tilde{\Phi}^{\vee}))_{\mathfrak{m}}=\text{Cot}_{\mathbb{F}_l}(\mathrm{Ker}(\tilde{\Phi}^{\vee}_{\mathfrak{m}}))=0$. Thus, passing to special fibers, we obtain 
		\begin{equation}
			\text{Cot}_{\mathbb{F}_l}(\mathcal{J}^D(M))_{\mathfrak{m}}
			\rightarrow 
			\text{Cot}_{\mathbb{F}_l}({{\mathcal{J}^1_D(N)}}^{\vee})_{\mathfrak{m}}\rightarrow 0. 
		\end{equation}
		Since N\'eron models and their generic fibers have the same dimension, and $J^D(M)$ and $J^1_D(M)^{\vee}$ have equal dimensions, we conclude that $$\mathrm{dim}_{\mathbb{F}_l}(	\text{Cot}_{\mathbb{F}_l}(\mathcal{J}^1_D(N)^{\vee}))= \mathrm{dim}_{\mathbb{F}_l}(	\text{Cot}_{\mathbb{F}_l}({{\mathcal{J}}^D(M)})).$$ Hence, the surjection in equation (8) is an isomorphism. In particular, $\text{Tan}_{\mathbb{Z}_l}(\mathcal{J}^D(M))_{\mathfrak{m}}\cong \text{Tan}_{\mathbb{Z}_l}({\mathcal{J}^1_D(N)}^{\vee})_{\mathfrak{m}}$. Now, by Corollary 1.1 in \cite{mazur1978rational},
		
		the exact sequence 
		\[
		0 \rightarrow J^1(N)^{D-\mathrm{old}} \rightarrow J^1(N) \rightarrow {J^1_D(N)}^{\vee} \rightarrow 0
		\]
		of abelian varieties over $\mathbb{Q}_l$ induces an injection 
		$$ 
		\text{Cot}_{\mathbb{F}_l}({{\mathcal{J}}^1_D(N)}^{\vee})[\mathfrak{m}] \hookrightarrow \text{Cot}_{\mathbb{F}_l}({\mathcal{J}}^1(N))[\mathfrak{\tilde{m}}],
		$$
		where $\mathfrak{m}$ is the maximal ideal of $\mathbb{T}^D(M)$ attached to $E[l]$ and $\mathfrak{\tilde{m}}$ is its lifting to $\mathbb{T}^1(N)$. Since $\text{Cot}_{\mathbb{F}_l}({\mathcal{J}}^1(N))[\mathfrak{\tilde{m}}]$ is isomorphic to $H^0(X_0(N)_{/\mathbb{F}_l},\Omega^1)[\mathfrak{\tilde{m}}]$, and the latter is one-dimensional over $\mathbb{T}^1(N)/\mathfrak{\tilde{m}}$ by \cite{Agashe2012}, we conclude that $\text{Cot}_{\mathbb{F}_l}({\mathcal{J}}^1(N))[\mathfrak{\tilde{m}}]$ is of dimension one over $\mathbb{T}^D(M)/\mathfrak{m}$. Therefore, by duality, $\text{Tan}_{\mathbb{Z}_l}({\mathcal{J}^1_{D}(N)}^{\vee})_{\mathfrak{m}}\cong \mathbb{T}^D(M)_{\mathfrak{m}}$.

		\item 
		It is a standard fact that $ \text{Tan}_{\mathbb{Z}_l}(\mathcal{J}^D(M))$ is isomorphic to $\text{H}^1(\mathcal{J}^D(M),\mathcal{O}_{\mathbb{Z}_l})$. By part (a), and the argument in part (b) of Theorem 5.2 of \cite{ribet1990modular}, which uses Dieudonn\'e theory, the result follows.
		\item 
		The approach used in \cite{mazur1991two} can be naturally generalized to this setting using Buzzard's model over $\mathbb{Z}_l$ \cite{buzzard1997integral} of the Shimura curve $X^D(M)$ attached to a rational indefinite quaternion algebra of discriminant $D$, whose Jacobian, in this paper, is denoted by $J^D(M)$. The statement follows in a manner similar to Proposition 22 of \cite{mazur1991two}.
		\item 
		For this part, we use the admissible model over $\mathbb{Z}_l$ for $X^D(M)$ constructed by Cerednik-Drinfeld in \cite{vcerednik1976uniformization} and \cite{drinfel1976coverings}. Since $J^D(M)$ has purely toric reduction at $l$, the non-$l$-old condition of Proposition 22 of \cite{mazur1991two} is automatic. One can then adapt Proposition 22 of $loc.~ cit.$ to conclude Jacobian multiplicity one.
	\end{enumerate}
\end{proof}
\begin{cor}
	With conditions as in Proposition 5.8, if $\mathrm{ord}_l(\delta^D(M))< \mathrm{ord}_l(\delta^1_D(N))$, then for any Hecke-equivariant isogeny $\Phi$ as in the proposition, $\tilde{\Phi}^{\vee}$ is not \'etale at $\mathfrak{m}$. 
\end{cor}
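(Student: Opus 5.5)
We argue by contraposition: we assume that for some Hecke-equivariant isogeny $\Phi$ the kernel of $\tilde{\Phi}^{\vee}$ is étale locally at $\mathfrak{m}$, and we show $\mathrm{ord}_l(\delta^D(M))\geq \mathrm{ord}_l(\delta^1_D(N))$. The idea is to compare both degrees with the common congruence number supplied by Theorem A.

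\textbf{Step 1 (multiplicity one for differentials on $J^D(M)$).} Under the étaleness assumption, Proposition 5.8(a) gives $\mathrm{Tan}_{\mathbb{Z}_l}(\mathcal{J}^D(M))_{\mathfrak{m}}\cong \mathbb{T}^D(M)_{\mathfrak{m}}$. This is exactly the module-theoretic form of multiplicity one for differentials used in the proof of Proposition 3.4. Note that Proposition 5.8 requires $l$ odd and $\mathfrak{m}$ non-Eisenstein, and the corollary inherits these hypotheses.

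\textbf{Step 2 (saturation and $\delta^D(M)$ versus $r^D(M)$).} By the faithfulness of the action of $\mathrm{End}(J^D(M))_{\mathfrak{m}}$ on the tangent space, the argument of Proposition 3.4 shows that the saturation satisfies $\mathbb{T}^D(M)'_{\mathfrak{m}}=\mathbb{T}^D(M)_{\mathfrak{m}}$. Hence $\mathrm{ord}_l(\delta^D(M))=\mathrm{ord}_l(r^D(M))$. There is one subtlety. Proposition 3.4 is phrased via the dimension of $\mathrm{Cot}_{\mathbb{F}_l}[\mathfrak{m}]$, while Proposition 5.8(a) directly supplies the isomorphism of the tangent space with $\mathbb{T}^D(M)_{\mathfrak{m}}$. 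The latter is precisely what the proof of Proposition 3.4 uses, so we can bypass the dimension condition and invoke that part of the proof directly.

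\textbf{Step 3 (the congruence chain for $\delta^1_D(N)$).} Theorem A gives $r^D(M)=r^1_D(N)$. From the definitions, $R^1_D(N)\subseteq S^1_D(N)$, so $\delta^1_D(N)\mid r^1_D(N)$; this is the general divisibility noted in the proof of Proposition 3.4. Combining with Step 2 gives
\[
\mathrm{ord}_l(\delta^1_D(N))\le \mathrm{ord}_l(r^1_D(N))=\mathrm{ord}_l(r^D(M))=\mathrm{ord}_l(\delta^D(M)).
\]
This contradicts the hypothesis $\mathrm{ord}_l(\delta^D(M))<\mathrm{ord}_l(\delta^1_D(N))$. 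Therefore $\tilde{\Phi}^{\vee}$ is not étale at $\mathfrak{m}$. Since $\Phi$ was an arbitrary Hecke-equivariant isogeny, the conclusion holds for every such $\Phi$.

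The only step needing care is Step 2. There we must confirm that the proof of Proposition 3.4 applies with $Q=1$ using the tangent-space isomorphism alone, without the dimension hypothesis. We must also check that the faithfulness of $\mathrm{End}(J^D(M))\otimes\mathbb{Z}_l$ on $\mathrm{Tan}_{\mathbb{Z}_l}$, localized at $\mathfrak{m}$, holds. The rest is formal.
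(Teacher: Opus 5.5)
Your proposal is correct and is essentially the paper's own argument, which simply cites Proposition~5.8(a), Proposition~3.4 and Theorem~A: \'etaleness yields $\mathrm{Tan}_{\mathbb{Z}_l}(\mathcal{J}^D(M))_{\mathfrak{m}}\cong\mathbb{T}^D(M)_{\mathfrak{m}}$, whence $\mathrm{ord}_l(\delta^D(M))=\mathrm{ord}_l(r^D(M))=\mathrm{ord}_l(r^1_D(N))\geq\mathrm{ord}_l(\delta^1_D(N))$, contradicting the hypothesis. The subtlety you flag in Step~2 is not a real issue: reducing that isomorphism modulo $\mathfrak{m}$ and dualizing already gives the dimension condition of Definition~3.3(i), so Proposition~3.4 applies as stated.
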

\begin{proof}
	The corollary follows from Proposition 3.4, Theorem A, and Proposition 5.8. 
\end{proof}

%% file: Proof_of_B.tex
\section{Proof of Theorem B}

Finally, we put everything together to prove Theorem B.

\begin{proof}[Proof of Theorem B]
	First, assume that $\mathfrak{m}$ is non-Eisenstein.
	
	Let $\tilde{\mathfrak{m}}$ be the maximal ideal of $\mathbb{T}^1(N)$ which is the preimage of $\mathfrak{m}$ under the natural quotient map $\mathbb{T}^1(N) \rightarrow \mathbb{T}^1_D(N) \cong \mathbb{T}^D(M)$. By Theorem 5.1, part (a), if $l \nmid N$, $\dim J^1(N)[\tilde{\mathfrak{m}}]=2$. Hence, $\mathrm{dim}\, J^1_D(N)[\mathfrak{m}]=2$.
	
	If $l \mid N$, it is known that $a_l=1 \text{ or } -1$, and that
	\[
	\bar{\rho}_{\mathfrak{m}}|_{G_{\mathbb{Q}_l}} \cong 
	\begin{pmatrix}
		\overline{\chi}_l & * \\
		0 & 1
	\end{pmatrix},
	\]
	where $\overline{\chi}_l$ is the mod-$l$ cyclotomic character. Since $l$ is odd, $\overline{\chi}_l \neq 1$, i.e. $\tilde{\mathfrak{m}}$ is $G_l$-distinguished in the sense of \cite{tilouine1997hecke}. Therefore, by part (b) of Theorem 5.1, it again follows that $\mathrm{dim}\, J^1_D(N)[\mathfrak{m}]=2$.
	
	By Proposition 3.4 and Theorem A, $\mathrm{ord}_l(r^1_D(N))=\mathrm{ord}_l(\delta^1_D(N)) \geq \mathrm{ord}_l(\delta^D(M))$.
	
	On the other hand, if $\mathfrak{m}$ is controllable at all primes dividing $D$, then by Proposition 5.5 and the previous lines, we have $\dim J^D(M)[\mathfrak{m}]=2$.
	
	Using Proposition 3.4 and Theorem A for elliptic curves $A^D(M)$ and $A^1_D(N)$, we conclude that
	\[
	\mathrm{ord}_l(\delta^1_D(N))=\mathrm{ord}_l(r^1_D(N))= \mathrm{ord}_l(r^D(M))= \mathrm{ord}_l(\delta^D(M)).
	\]
	
	If $\mathfrak{m}$ is not controllable at exactly one prime $p$ dividing $D$, then by Proposition 5.5, if $q$ is any other prime dividing $D$, $\dim J^{\frac{D}{pq}}(Mpq)[\mathfrak{m}]=2$.
	
	Therefore,
	\[
	\mathcal{X}_p(J^D(M))^*_{\mathfrak{m}} \cong 
	\mathcal{X}_q(J^{\frac{D}{pq}}(Mpq))_{\mathfrak{m}} \otimes_{\mathbb{T}_{\mathfrak{m}}^{\frac{D}{pq}}} \mathbb{T}^D(M)_{\mathfrak{m}}
	\]
	(\cite{helm2007maps}, Proposition 5.8)
	\[
	\cong \mathbb{T}^D(M)_{\mathfrak{m}}
	\]
	(\cite{helm2007maps}, Lemma 6.5 and the controllability assumption of $\mathfrak{m}$ at $q$).
	
	Since $J^D(M)$ has purely toric reduction at the prime $p$, the action of $\mathbb{T}^1_{D}(N)$ and its saturation $\mathbb{T}'^1_{D}(N)$ on $\mathcal{X}_p(J^D(M))^*$ is faithful. Hence,
	\[
	\mathbb{T}^1_{D}(N)'_{\mathfrak{m}}=\mathbb{T}^1_{D}(N)_{\mathfrak{m}}.
	\]
	
	Applying Proposition 3.4 and Theorem A once again, we obtain
	\[
	\mathrm{ord}_l(\delta^1_D(N))=\mathrm{ord}_l(r^1_D(N))= \mathrm{ord}_l(r^D(M))= \mathrm{ord}_l(\delta^D(M)).
	\]
	
     It remains to prove that if $\mathfrak{m}$ is Eisenstein, then
	\[
	\mathrm{ord}_l(r^1_D(N))=\mathrm{ord}_l(\delta^1_D(N))\geq \mathrm{ord}_l(\delta^D(M)).
	\]
	
	We proved (without the non-Eisenstein assumption on $\mathfrak{m}$) in Proposition 5.8, part (a), that
	\[
	\text{Tan}_{\mathbb{Z}_l}({{J}^1_{D}(N)}^{\vee})_{\mathfrak{m}}\cong \mathbb{T}^D(M)_{\mathfrak{m}}.
	\]
	Hence,
	\[
	\mathbb{T}^D(M)'_{\mathfrak{m}} \cong \mathbb{T}^D(M)_{\mathfrak{m}}.
	\]
	
	Note that the Hecke algebra $\mathbb{T}^D(M)$ acting on ${J}^1_D(N)^{\vee}$ is the image of the Hecke algebra $\mathbb{T}^D(M)$ acting on $J^1_D(N)$ under the natural isomorphism
	\[
	\mathrm{End}(J^1_D(N)) \rightarrow \mathrm{End}({J}^1_D(N)^{\vee}).
	\]
	
	Consequently, the same isomorphism
	\[
	\mathbb{T}^D(M)'_{\mathfrak{m}} \cong \mathbb{T}^D(M)_{\mathfrak{m}}
	\]
	holds when viewed in $\mathrm{End}(J^1_D(N))$.
	
	This implies that
	\[
	\mathrm{ord}_l(\delta^1_D(N)) = \mathrm{ord}_l(r^1_{D}(N)).
	\]
	
\end{proof}

%% file: Case_of_l=2_and_a_Criterion.tex
\section{$\ell=2$ and a Criterion}

The case $l=2$ is more subtle because sufficiently strong multiplicity one results are not available in this setting. To our knowledge, the strongest result in this case is Theorem 5.1, part (d).

Now, let $E$ be a rational elliptic curve of conductor $N=2M$, where $M$ is square-free and odd. If $E[2]$ is irreducible as a $G_{\mathbb{Q}}$-module, then for every divisor $Q$ of $M$, by Theorem 5.1, part (c), and Proposition 3.4,
\[
\mathrm{ord}_2(\delta^1_{2Q}(2M))=\mathrm{ord}_2(r^1_{2Q}(2M)).
\]
In the following proposition, we remove the non-$2$-old condition.

\begin{prop}
	Let $E$ be a rational elliptic curve of conductor $N=2M$ for a square-free and odd natural number $M$. If $E[2]$ is irreducible as a $G_{\mathbb{Q}}$-module, then for any $Q$ dividing $M$,
	\[
	\mathrm{ord}_2(\delta^1_{2Q}(N))=\mathrm{ord}_2(r^1_{2Q}(N)).
	\]
	In particular,
	\[
	\mathrm{ord}_2(\delta^{2Q}(N/2Q)) \leq \mathrm{ord}_2(\delta^1_{2Q}(N))
	\]
	for any $Q$ dividing $M$ which is a product of an odd number of primes.
\end{prop}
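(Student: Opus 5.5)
Following the proof of Proposition 3.4, the plan is to show that the $2$-adic saturation index of the Hecke algebra vanishes at $\mathfrak{m}$, i.e. that $\mathbb{T}^1_{2Q}(N)'_{\mathfrak{m}}=\mathbb{T}^1_{2Q}(N)_{\mathfrak{m}}$. Here $\mathfrak{m}$ is the maximal ideal attached to $E[2]$ (more precisely, to the optimal curve $A^1_{2Q}(N)$ inside $J^1_{2Q}(N)$). By Proposition 3.4 it suffices to exhibit one of the three multiplicity one properties at $\mathfrak{m}$. The natural choice is \emph{multiplicity one for characters} at the prime $2$, since $2\mid 2Q$, and $J^1_{2Q}(N)$ has purely toric reduction at $2$ (it is $2$-new, and $2\| N$). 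This avoids the non-$2$-old hypothesis of Theorem 5.1(c) entirely. So the main claim is
\[
\dim \mathcal{X}_2(J^1_{2Q}(N))/\mathfrak{m}\mathcal{X}_2(J^1_{2Q}(N))=1 .
\]

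To prove the claim I would first compare with the full Jacobian. The character group $\mathcal{X}_2(J^1(N))$ of $J_0(N)$ at $2$ is described by Ribet's exact sequence. Its $2$-new quotient is the relevant piece, and $\mathcal{X}_2(J^1_{2Q}(N))$ is a quotient (up to finite index prime to the issue, or via the degeneracy maps) of $\mathcal{X}_2(J^1_2(N))$. Hence it is enough to bound $\dim \mathcal{X}_2(J^1_2(N))/\tilde{\mathfrak{m}}$ by $1$, and then descend to the $Q$-new part by tensoring with $\mathbb{T}^1_{2Q}(N)_{\mathfrak{m}}$. The descent uses the analogue of Lemma 8.2 of Helm, as in part (2) of Proposition 5.5.

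For the bound at $2$ I would use the Mazur principle in the form of Lemma 6.5 of Helm. The character group modulo $\mathfrak{m}$ has dimension $1$ as soon as $\mathfrak{m}$ is controllable at $p=2$. Since $l=p=2$, this is case 4 of Definition 5.2: the restriction of $\bar\rho_{\mathfrak{m}}$ to a decomposition group at $2$ must not be scalar. Here $E$ has multiplicative reduction at $2$, so the Tate parametrization gives
\[
\bar\rho_{\mathfrak{m}}|_{G_{\mathbb{Q}_2}}\cong\begin{pmatrix}\bar\chi&*\\0&1\end{pmatrix}\otimes\epsilon ,
\]
with $\bar\chi$ trivial mod $2$. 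This representation is scalar exactly when $*$ is trivial, which happens iff $2\mid \mathrm{ord}_2(q_E)$. I expect this to be the main obstacle.

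If the local representation is non-scalar, controllability holds and we are done. If it is scalar, I would instead try multiplicity one for the Jacobian at a prime where $\mathfrak{m}$ is non-$2$-old. Then level lowering (Mazur principle) still fails only when $2$-old congruences exist. Alternatively, I would use Theorem 5.1(d) at an auxiliary level, or argue with the character group at a prime dividing $Q$ when $Q>1$, which again has purely toric reduction. Irreducibility of $E[2]$ ensures non-Eisensteinness throughout.

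For the "in particular" statement, $Q$ has an odd number of prime factors, so $2Q$ is a valid discriminant with $M'=N/2Q$. Theorem A then gives $r^{2Q}(N/2Q)=r^1_{2Q}(N)$. Since $\delta^{2Q}(N/2Q)\mid r^{2Q}(N/2Q)$ by Definition 3.1 (the proof of Proposition 3.4), we get
\[
\mathrm{ord}_2(\delta^{2Q}(N/2Q))\le \mathrm{ord}_2(r^1_{2Q}(N))=\mathrm{ord}_2(\delta^1_{2Q}(N)).
\]
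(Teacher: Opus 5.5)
\textbf{There is a genuine gap: the key claim is not established.} Your framework matches the paper. You reduce, via Proposition 3.4, to showing that $\mathcal{X}_2(J^1_{2Q}(N))_{\mathfrak m}$ is free of rank one. You get this from the corresponding statement for $\mathcal{X}_2(J^1(N))_{\mathfrak m}$ by a Helm-type base change to the $2Q$-new quotient. The ``in particular'' part, via Theorem A and $\delta^{2Q}(N/2Q)\mid r^{2Q}(N/2Q)$, is fine. The missing step is $\dim \mathcal{X}_2(J^1(N))/\tilde{\mathfrak m}\mathcal{X}_2(J^1(N))=1$.

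The Mazur principle (Helm's Lemma 6.5, as used in the proof of Proposition 5.5) does not give this by itself. It bounds $\mathcal{X}_p/\mathfrak m\mathcal{X}_p$ \emph{starting from} the input that the $\mathfrak m$-torsion of the Jacobian is two-dimensional. That is why it is applied to the auxiliary variety $J^{\min}$.

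You cannot transfer the conclusion from $J^{\min}$ to $J^1(N)$. The saturation of the Hecke algebra and the modular exponent $\delta^1_{2Q}(N)$ change under isogeny; only the congruence exponent is insensitive, as in Theorem A. For $J^1(N)$ itself at $l=2\mid N$, the required input is exactly what is unknown. Theorem 5.1(c) needs $\mathfrak m$ to be non-$2$-old, which is the hypothesis this proposition is meant to remove. Theorem 5.1(d) needs $N$ odd.

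So even when $\bar\rho_{\mathfrak m}|_{G_{\mathbb{Q}_2}}$ is non-scalar, your argument is circular in the $2$-old case. Moreover, the statement has no non-scalar hypothesis, and your fallbacks for the scalar case do not close the gap:
\begin{itemize}
\item the non-$2$-old route reintroduces the excluded hypothesis;
\item Theorem 5.1(d) needs $N$ odd and a non-scalar restriction anyway;
\item using $\mathcal{X}_q$ for $q\mid Q$ needs controllability at $q$, and is unavailable when $Q=1$.
\end{itemize}
A smaller error: the Tate-curve extension class is the image of $q_E$ in $\mathbb{Q}_2^\times/(\mathbb{Q}_2^\times)^2$. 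So the local representation is scalar iff $q_E$ is a square in $\mathbb{Q}_2$, not iff $\mathrm{ord}_2(q_E)$ is even. Even valuation only makes it unramified.

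\textbf{The missing idea} is to bypass the Mazur principle entirely. Use multiplicity one for \emph{differentials} on the full modular curve, which holds at $2$ by the $q$-expansion argument because $2\| N$, with no local condition at $2$. The paper's chain is:
\begin{itemize}
\item Mazur--Ribet (Proposition 18) identifies $\mathrm{Hom}(\mathcal{X}_2(J^1(N))/\mathfrak m\mathcal{X}_2(J^1(N)),\mathbb{F}_2)$ with $H^0(X_0(N)_{/\mathbb{F}_2},\Omega^1)[\mathfrak m]$, which is one-dimensional.
\item Nakayama's lemma then gives a surjection $\mathbb{T}^1(N)_{\mathfrak m}\to\mathcal{X}_2(J^1(N))_{\mathfrak m}$.
\item Ribet's Theorem 3.10 upgrades this to $\mathcal{X}_2(J^1(N))_{\mathfrak m}\cong\mathbb{T}^1_2(N)_{\mathfrak m}$.
\item Helm's Lemma 5.1 gives $\mathcal{X}_2(J^1_{2Q}(N))_{\mathfrak m}\cong\mathcal{X}_2(J^1(N))_{\mathfrak m}\otimes_{\mathbb{T}^1(N)_{\mathfrak m}}\mathbb{T}^1_{2Q}(N)_{\mathfrak m}$, which is free of rank one.
\item Proposition 3.4 (multiplicity one for characters, using purely toric reduction at $2$) finishes the proof.
\end{itemize}
No controllability condition at $2$ is needed anywhere.
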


\begin{proof}
	It is known that
	\[
	\mathrm{Hom}(\mathcal{X}_2(J^1(N))/\mathfrak{m}\mathcal{X}_2(J^1(N)), \mathbb{F}_2)
	\]
	is isomorphic, as a $\mathbb{T}^1(N)$-module, to
	\[
	H^0\!\left(X_0(N)_{/\mathbb{F}_2}, \Omega^1\right)[\mathfrak{m}]
	\]
	(see Proposition 18 of \cite{mazur1991two}, for instance). The latter has dimension $1$, and therefore by Nakayama's lemma, there is a surjection
	\[
	\mathbb{T}^1(N)_{\mathfrak{m}} \rightarrow \mathcal{X}_2(J^1(N))_{\mathfrak{m}}
	\]
	of $\mathbb{T}^1(N)$-modules. By Theorem 3.10 of \cite{ribet1990modular}, it induces an isomorphism
	\[
	\mathcal{X}_2(J^1(N))_{\mathfrak{m}} \cong \mathbb{T}^1_2(N)_{\mathfrak{m}}.
	\]
	By Lemma 5.1 of \cite{helm2007maps}, one has
	\[
	\mathcal{X}_2(J^1_{2Q}(N))_{\mathfrak{m}}
	\cong
	\mathcal{X}_2(J^1(N))_{\mathfrak{m}}
	\otimes_{\mathbb{T}^1(N)_{\mathfrak{m}}}
	\mathbb{T}^1_{2Q}(N)_{\mathfrak{m}}.
	\]
	By Proposition 3.5,
	\[
	\mathrm{ord}_2(\delta^1_{2Q}(N))=\mathrm{ord}_2(r^1_{2Q}(N)).
	\]
	The second part follows from the first part and Theorem A.
\end{proof}

It is not known whether Proposition 7.1 remains valid when $E[2]$ is reducible or when $N$ is odd. In this direction, the following proposition predicts the failure of multiplicity one in characteristic $2$ and could potentially produce examples of failure of multiplicity one similar to those in \cite{kilford2002some}.

\begin{prop}[Criterion]
	Suppose $\mathfrak{m}$ is the maximal ideal of $\mathbb{T}^1_{D}(N)$ attached to $E[2]$. If
	\[
	\mathrm{ord}_2(\delta^D(M))>\mathrm{ord}_2(\delta^1_{D}(N)),
	\]
	then
	\begin{enumerate}[label=(\alph*),start=\intfromalph{a}]
		\item $\dim J^1_{D}(N)[\mathfrak{m}]>2$. In particular, $\dim J^1(N)[\mathfrak{m}]>2$;
		\item $\dim \mathrm{Cot}_{\mathbb{F}_2}(\mathcal{J}^1_{D}(N))[\mathfrak{m}] >1$;
		\item $\dim \mathcal{X}_p(J^1_{D}(N))/\mathfrak{m}\mathcal{X}_p(J^1_{D}(N))>1$ for any prime $p$ dividing $D$.
	\end{enumerate}
\end{prop}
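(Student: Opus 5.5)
The plan is to argue by contraposition, using Proposition 3.4 applied to $A^1_D(N)$ together with Theorem A. Suppose that one of (a), (b), (c) fails. That is, $\mathfrak{m}$ satisfies one of the three multiplicity one properties of Definition 3.3 for $J^1_D(N)$: either $\dim J^1_D(N)[\mathfrak{m}]=2$, or $\dim \mathrm{Cot}_{\mathbb{F}_2}(\mathcal{J}^1_D(N))[\mathfrak{m}]\le 1$, or $\dim \mathcal{X}_p(J^1_D(N))/\mathfrak{m}\mathcal{X}_p(J^1_D(N))=1$ for some $p\mid D$.

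A few preliminary checks come first.
\begin{itemize}
\item For (a), the space is nonzero because it contains $A^1_D(N)[2]$, which has dimension $2$. So failure of (a) means the dimension is exactly $2$.
\item For (c), the quotient is nonzero: $J^1_D(N)$ has purely toric reduction at $p\mid D$, and $\mathfrak{m}$ lies in the support of the faithful module $\mathcal{X}_p$. So failure of (c) means the dimension is exactly $1$.
\item Definition 3.3(k) requires $l=2$ to divide $D$ or $Q$. I expect this can be handled either by reading (k) for the relevant prime $p$, or by noting that the proof of Proposition 3.4 uses only faithfulness of the action on the character group at some prime of purely toric reduction. I will use the latter.
\end{itemize}

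In each case the proof of Proposition 3.4 gives $\mathbb{T}^1_D(N)'_{\mathfrak{m}}=\mathbb{T}^1_D(N)_{\mathfrak{m}}$, and hence $\mathrm{ord}_2(r^1_D(N))=\mathrm{ord}_2(\delta^1_D(N))$. By Theorem A, $r^1_D(N)=r^D(M)$. Moreover, as noted in the proof of Proposition 3.4, $\delta^D(M)\mid r^D(M)$ always holds, because $R^D(M)\subseteq S^D(M)$. Combining these,
\[
\mathrm{ord}_2(\delta^D(M))\le \mathrm{ord}_2(r^D(M))=\mathrm{ord}_2(r^1_D(N))=\mathrm{ord}_2(\delta^1_D(N)).
\]
This contradicts the hypothesis, so (a), (b) and (c) all hold.

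The ``in particular'' clause of (a) follows because $J^1_D(N)$ is a $\mathbb{T}^1(N)$-stable subvariety of $J^1(N)$. Hence $J^1_D(N)[\mathfrak{m}]\subseteq J^1(N)[\tilde{\mathfrak{m}}]$, where $\tilde{\mathfrak m}$ is the preimage of $\mathfrak m$ in $\mathbb{T}^1(N)$, and the dimension can only grow.

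The main obstacle I expect is making sure Proposition 3.4 applies verbatim at $l=2$ for the $D$-new quotient, with no hidden odd-$l$ or self-duality assumption. The places to check are the Nakayama and Pontryagin duality step, which converts each multiplicity one property into local freeness of the tangent space, Tate module or character group, and the faithfulness of the action on each of these modules. Neither step appears to use $l$ odd. Faithfulness on the character group at $p\mid D$ holds because $J^1_D(N)$ has purely toric reduction at $p$, which comes from its isogeny with $J^D(M)$ and Theorem 2.3(2).
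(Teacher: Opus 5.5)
Your argument is correct and is the one the paper intends. The paper's proof is the single line ``this follows immediately from Proposition~3.4.'' Your contrapositive unpacks exactly that: multiplicity one for $J^1_D(N)$ at $\mathfrak{m}$ gives $\mathrm{ord}_2(r^1_D(N))=\mathrm{ord}_2(\delta^1_D(N))$, and then Theorem~A together with $\delta^D(M)\mid r^D(M)$ gives $\mathrm{ord}_2(\delta^D(M))\le\mathrm{ord}_2(\delta^1_D(N))$. Your additional checks are details the paper leaves implicit: that the relevant $\mathfrak{m}$-torsion and character-group quotients are nonzero, and that the character-group case only needs faithfulness at a prime $p\mid D$ of purely toric reduction rather than at $l=2$.
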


\begin{proof}
	This follows immediately from Proposition~3.4.
\end{proof}

%% file: Higher_dim.tex
\section{Higher Dimensions}

The results of previous sections can be generalized to modular abelian varieties attached to newforms. In this section, as an example, we generalize Corollary 4.1 (Proposition 8.2) and Proposition 7.2 (Proposition 8.3). The latter will be used in Section 9.1. Let $N=DM$ be square-free, where $D$ is a product of an even number of primes. Let $f$ be a newform of level $N$ and weight 2. Similarly to the case of elliptic curves, one defines $\mathbb{T}^D_Q(M)$-invariant abelian varieties $A^D_Q(M)$ and $B^D_Q(M)$ such that
\[
J^D_Q(M)=A^D_Q(M)+B^D_Q(M).
\]
Let $\mathcal{O}_f$ be the $\mathbb{Z}$-algebra generated by Fourier coefficients of $f$. Let
\[
I=\ker\bigl(\pi_{A^D_Q(M)}:\mathbb{T}^D_Q(M)\to \mathcal{O}_f \hookrightarrow \mathrm{End}(A^D_Q(M))\bigr).
\]
Similarly, one has a map
\[
\pi_{B^D_Q(M)}:\mathbb{T}^D_Q(M)\to \mathrm{End}(B^D_Q(M)),
\]
and let $\mathbb{T}^D_Q(M)_{A^D_Q(M)}$ and $\mathbb{T}^D_Q(M)_{B^D_Q(M)}$ be
\[
\pi_{A^D_Q(M)}(\mathbb{T}^D_Q(M))=\mathcal{O}_f,
\qquad
\pi_{B^D_Q(M)}(\mathbb{T}^D_Q(M)).
\]

The following definition generalizes Definition 3.1.

\begin{Def}
	\begin{enumerate}[label=(\alph*), start=\intfromalph{i}]
		
		\item The \emph{congruence ideal} attached to $A^D_Q(M)$ is
		\[
		R^D_Q(M):=\pi_{A^D_Q(M)}\bigl(\ker(\pi_{B^D_Q(M)})\bigr)\subseteq \mathcal{O}_f.
		\]
		
		The \emph{congruence exponent}, respectively the \emph{congruence number}, attached to $A^D_Q(M)$ are
		\[
		r^D_Q(M)=\exp\bigl(\mathcal{O}_f/R^D_Q(M)\bigr),
		\qquad
		\tilde r^D_Q(M)=\#\bigl(\mathcal{O}_f/R^D_Q(M)\bigr);
		\]
		
	\item	The \emph{intersection ideal} attached to $A^D_Q(M)$ is
		\[
		S^D_Q(M):=\operatorname{Ann}_{\mathcal{O}_f}\bigl(A^D_Q(M)\cap B^D_Q(M)\bigr).
		\]
		
		The \emph{modular exponent}, respectively the \emph{modular number}, attached to $A^D_Q(M)$ are
		\[
		\delta^D_Q(M)=\exp\bigl(A^D_Q(M)\cap B^D_Q(M)\bigr),
		\qquad
		\tilde{\delta}^D_Q(M)=\#\bigl(A^D_Q(M)\cap B^D_Q(M)\bigr).
		\]
		
	\end{enumerate}
\end{Def}

The following proposition generalizes Corollary 4.1:

\begin{prop}
	\[
	\delta^D(M)\mid \delta^1(N).
	\]
\end{prop}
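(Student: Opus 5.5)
We follow the proof of Corollary~4.1 verbatim, now working with ideals of $\mathcal{O}_f$ instead of $\mathbb{Z}$. The chain we aim for is
\[
\delta^D(M)\mid r^D(M)=r^1_D(N)\mid r^1(N)=\delta^1(N).
\]

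\emph{First divisibility.} By the definition above, $R^D(M)\subseteq S^D(M)$. Indeed, if $t=\pi_{A^D(M)}(\tilde t)$ with $\tilde t\in\ker(\pi_{B^D(M)})$, then $\tilde t$ kills $B^D(M)$. Hence $t$ kills $A^D(M)\cap B^D(M)$. This gives a surjection $\mathcal{O}_f/R^D(M)\to\mathcal{O}_f/S^D(M)$.

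The exponent of $A^D(M)\cap B^D(M)$ is the positive generator of $S^D(M)\cap\mathbb{Z}=\operatorname{Ann}_{\mathbb{Z}}(A^D(M)\cap B^D(M))$. Similarly, $r^D(M)$ is the generator of $R^D(M)\cap\mathbb{Z}$. Since $R^D(M)\cap\mathbb{Z}\subseteq S^D(M)\cap\mathbb{Z}$, we get $\delta^D(M)\mid r^D(M)$.

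\emph{Middle equality.} The argument of Theorem~A goes through unchanged. Take a $\mathbb{T}^D(M)$-equivariant isogeny $\phi:J^D(M)\to J^1_D(N)$. It sends $A^D(M)$ to the $f$-isotypic abelian subvariety $A^1_D(N)$, and sends $B^D(M)=IJ^D(M)$ to $IJ^1_D(N)=B^1_D(N)$. The two Hecke algebras are identified, and so are the maps $\pi_A$ to $\mathcal{O}_f$. An element $\tilde t$ killing $B^D(M)$ therefore kills $B^1_D(N)$, which gives $R^D(M)\subseteq R^1_D(N)$ inside $\mathcal{O}_f$. An isogeny in the other direction gives the reverse inclusion. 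Hence the two ideals are equal, and in particular $r^D(M)=r^1_D(N)$.

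\emph{Third divisibility.} Let $q:\mathbb{T}^1(N)\to\mathbb{T}^1_D(N)$ be the quotient map. If $\tilde t\in\mathbb{T}^1(N)$ kills $B^1(N)$, then it kills $B^1(N)\cap J^1_D(N)\supseteq B^1_D(N)$. So $q(\tilde t)$ lies in $\ker(\pi_{B^1_D(N)})$ and has the same image in $\mathcal{O}_f$. Hence $R^1(N)\subseteq R^1_D(N)$, and intersecting with $\mathbb{Z}$ gives $r^1_D(N)\mid r^1(N)$.

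\emph{Last equality, the main obstacle.} We need $r^1(N)=\delta^1(N)$ for the newform abelian variety $A_f\subseteq J_0(N)$ with $N$ square-free. Agashe--Ribet--Stein prove this in \cite{Agashe2012} for newforms of arbitrary dimension when the level is square-free, and we would cite that result. For a self-contained argument, it suffices to show $\mathbb{T}^1(N)'_{\mathfrak{m}}=\mathbb{T}^1(N)_{\mathfrak{m}}$ for every maximal ideal $\mathfrak{m}$ in the support of $\mathcal{O}_f/R^1(N)$. This follows from multiplicity one for differentials, $\dim \mathrm{Cot}_{\mathbb{F}_l}(\mathcal{J}^1(N))[\mathfrak{m}]\le 1$, which holds at square-free level by \cite{Agashe2012} and the $q$-expansion principle. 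We then run the saturation argument of Proposition~3.4 with $\mathcal{O}_f$ in place of $\mathbb{Z}$. That argument gives $S/R\hookrightarrow \mathbb{T}'/\mathbb{T}$, which vanishes after localizing at each such $\mathfrak{m}$. Hence $R^1(N)=S^1(N)$, so the exponents agree.

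Combining the four steps gives $\delta^D(M)\mid\delta^1(N)$.
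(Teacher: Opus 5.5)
Your proposal is correct and is essentially the paper's proof. It uses the same chain $\delta^D(M)\mid r^D(M)=r^1_D(N)\mid r^1(N)=\delta^1(N)$, with the middle equality coming from the Hecke-equivariant isogenies as in Theorem A, and the last from the known equality of modular and congruence exponents for newform quotients at square-free level. The differences are cosmetic: you identify the exponents with the positive generators of $R\cap\mathbb{Z}$ and $S\cap\mathbb{Z}$ rather than comparing $\mathrm{ord}_p$ prime by prime, and you cite \cite{Agashe2012}, with a sketch via the saturation argument of Proposition 3.4, where the paper cites \cite{agashe2008modular}.
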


\begin{proof}
	Let $\Phi$ be the $\mathbb{T}^D(M)$-equivariant isogeny in the beginning of the proof of Theorem A. One obtains a similar decomposition for $J^1_D(N)$ of the form
	\[
	\Phi(A^D(M))+\Phi(B^D(M)),
	\]
	and one proves that $r^D(M)=r^1_D(N)$, where $r^1_D(N)$ is the $D$-new congruence number attached to $A$. Since $f$ is a newform, it is proven in \cite{agashe2008modular} that
	\[
	\mathrm{ord}_p(r^1(N))=\mathrm{ord}_p(\delta^1(N)).
	\]
 It is not hard to see that
 \[
 \exp\bigl((A^D(M)\cap B^D(M))_{p}\bigr)
 =
 \exp\bigl((\mathcal O_f/S^D(M))_{p}\bigr).
 \]
 Moreover, from Definition 8.1,
 \[
 \pi_{A^D(M)}(\ker(\pi_{B^D(M)}))
 \subseteq
 \operatorname{Ann}_{\mathcal O_f}(A^D(M)\cap B^D(M)).
 \]
 Hence there is a canonical quotient map
 \[
 \mathcal O_f/R^D(M)\twoheadrightarrow \mathcal O_f/S^D(M).
 \]
 Therefore
 \[
 \exp\bigl((\mathcal O_f/S^D(M))_p\bigr)
 \le
 \exp\bigl((\mathcal O_f/R^D(M))_p\bigr).
 \]
 Taking $p$-adic valuations yields
 \[
 \operatorname{ord}_p(\delta^D(M))
 \le
 \operatorname{ord}_p(r^D(M)).
 \]
 It follows easily from the definition that $r^1_D(N) \mid r^1(N)$ and hence
 
\[
 \operatorname{ord}_p(\delta^D(M)) \le
 \operatorname{ord}_p(r^D(M)) =
 \operatorname{ord}_p(r^1_D(N))\le 
 \operatorname{ord}_p(r^1(N))=
 \operatorname{ord}_p(\delta^1(N)).
\]
\end{proof}

Here, we state the analog of Proposition 7.2 and add another part, which is a generalization of Theorem 2.1 of \cite{agashe2008modular}.

\begin{prop}
	With notations as above:
	
	\medskip
	\noindent
	1. If $\mathrm{ord}_2(r^D_{Q}(M))>\mathrm{ord}_2(\delta^D_{Q}(M))$, then there is a maximal ideal $\mathfrak{m}$ of $\mathbb{T}^D_{Q}(M)$ of characteristic 2, containing $I_f$, such that
	\begin{enumerate}[label=(\alph*), start=\intfromalph{a}]
		\item $\dim J^D_{Q}(M)[\mathfrak{m}]>2$. In particular, $\dim J^D(M)[\mathfrak{m}]>2$;
		\item $\dim \mathrm{Cot}_{\mathbb{F}_2}(\mathcal{J}^D_{Q}(M))[\mathfrak{m}]>1$;
		\item $\dim \mathcal{X}_p(J^D_{Q}(M))/\mathfrak{m}\mathcal{X}_p(J^D_{Q}(M))>1$ for any prime $p$ dividing $D$.
	\end{enumerate}
	
	\medskip
	\noindent
	2. If $\mathrm{ord}_2(\tilde{\delta}^D_{Q}(M)) \neq 2\,\mathrm{ord}_2(\tilde{r}^D_{Q}(M))$, then there exists a maximal ideal $\mathfrak{m}$ of $\mathbb{T}^D_{Q}(M)$ of characteristic 2, containing $I_f+\operatorname{Ann}_{\mathbb{T}^D_{Q}(M)}(I_f)$, such that
	\[
	\dim J^D_{Q}(M)^{\vee}[\mathfrak{m}]>2.
	\]
	In particular, if $\tilde{\delta}^D_{Q}(M)$ is not a perfect square, then there exists a maximal ideal $\mathfrak{m}$ of $\mathbb{T}^D_{Q}(M)$ containing $I_f+\operatorname{Ann}_{\mathbb{T}^D_{Q}(M)}(I_f)$ such that
	\[
	\dim J^D_{Q}(M)^{\vee}[\mathfrak{m}]>2.
	\]
	
\end{prop}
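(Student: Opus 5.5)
We prove Part~1 as a higher-dimensional version of the contrapositive of Proposition~3.4, working $2$-adically. First, I would check that the two descriptions used in the proof of Proposition~3.4 still hold with $\mathbb{Z}$ replaced by $\mathcal{O}_f$. Writing $\mathbb{T}=\mathbb{T}^D_Q(M)$ and $\mathbb{T}'$ for its saturation in $\mathrm{End}(J^D_Q(M))$, these are
\[
S^D_Q(M)=\mathbb{T}'\cap \pi_{A^D_Q(M)}(\mathbb{T}), \qquad R^D_Q(M)=\mathbb{T}\cap S^D_Q(M),
\]
with both intersections taken inside $\mathrm{End}(J^D_Q(M))\otimes\mathbb{Q}\supseteq \mathcal{O}_f\otimes\mathbb{Q}\times \mathbb{T}_{B}\otimes\mathbb{Q}$. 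From them we again get an injection of $\mathbb{T}$-modules $S^D_Q(M)/R^D_Q(M)\hookrightarrow \mathbb{T}'/\mathbb{T}$.

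Next we locate the support. The module $S^D_Q(M)/R^D_Q(M)$ is a finite $\mathcal{O}_f$-module, so every element of $\mathbb{T}$ acts on it through $\pi_{A^D_Q(M)}$. Hence it is killed by $I_f$, and its support consists of maximal ideals $\mathfrak{m}\supseteq I_f$. Comparing exponents, the hypothesis $\mathrm{ord}_2(r^D_Q(M))>\mathrm{ord}_2(\delta^D_Q(M))$ means $(\mathcal{O}_f/R)_2\neq(\mathcal{O}_f/S)_2$. Here one should note that the exponent of $A\cap B$ equals that of $\mathcal{O}_f/S$ at $2$, as observed in the proof of Proposition~8.2. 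So $(S/R)\otimes\mathbb{Z}_2\neq 0$, and there is a maximal ideal $\mathfrak{m}\supseteq I_f$ of residue characteristic $2$ with $\mathbb{T}'_{\mathfrak{m}}\neq \mathbb{T}_{\mathfrak{m}}$. Finally, the first paragraph of the proof of Proposition~3.4 shows that each of the multiplicity-one conditions forces $\mathbb{T}'_{\mathfrak{m}}=\mathbb{T}_{\mathfrak{m}}$. That argument uses Nakayama's lemma and faithfulness on $\mathrm{Tan}$, on $Ta_{\mathfrak{m}}$, and on $\mathcal{X}_p$ (the last by purely toric reduction at $p\mid D$), and it works verbatim for any $\mathfrak{m}$. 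So all three of (a), (b), (c) must fail at this $\mathfrak{m}$. The ``in particular'' in (a) holds because $J^D_Q(M)[\mathfrak{m}]\subseteq J^D(M)[\mathfrak{m}]$, viewing $\mathfrak{m}$ in $\mathbb{T}^D(M)$ by pullback.

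For Part~2 I would follow the proof of Theorem~2.1 of \cite{agashe2008modular}, arguing by contrapositive. Assume $\dim J^D_Q(M)^{\vee}[\mathfrak{m}]=2$ for every $\mathfrak{m}$ of characteristic $2$ containing $I_f+\operatorname{Ann}_{\mathbb{T}}(I_f)$. The plan is to compute $\#(A\cap B)_2$ via $2$-adic Tate modules. The kernel of $A\times B\to J$ is $A\cap B$, embedded antidiagonally, so $(A\cap B)_2$ is the cokernel of $Ta_2(A)\oplus Ta_2(B)\to Ta_2(J)$. Since $J=J^D_Q(M)$, we have $Ta_2(A)=Ta_2(J)[I_f]$ and $Ta_2(B)=Ta_2(J)[\operatorname{Ann}(I_f)]$, both saturated. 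Localizing, only maximal ideals containing $I_f+\operatorname{Ann}(I_f)$ contribute: away from them one of the two summands is everything locally. At the remaining $\mathfrak{m}$, multiplicity one for $J^{\vee}$ combined with Weil-pairing duality gives
\[
Ta_{\mathfrak{m}}(J)\cong \mathrm{Hom}_{\mathbb{Z}_2}(\mathbb{T}_{\mathfrak{m}},\mathbb{Z}_2)^2 .
\]
Then $Ta_{\mathfrak{m}}(J)[I_f]$ and $Ta_{\mathfrak{m}}(J)[\operatorname{Ann} I_f]$ are the duals of $(\mathbb{T}/I_f)_{\mathfrak{m}}^2$ and $(\mathbb{T}/\operatorname{Ann} I_f)_{\mathfrak{m}}^2$. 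Dualizing the congruence exact sequence
\[
0\to\mathbb{T}\to \mathbb{T}/I_f\oplus\mathbb{T}/\operatorname{Ann}(I_f)\to C\to 0,
\]
with $C\cong \mathcal{O}_f/R$, identifies the cokernel with $(C_{\mathfrak{m}})^{\vee,2}$, of order $\#(C_{\mathfrak{m}})^2$. Multiplying over all such $\mathfrak{m}$ gives $\mathrm{ord}_2(\tilde\delta)=2\,\mathrm{ord}_2(\tilde r)$. For the ``in particular'': if $\tilde\delta$ is not a perfect square, then $\mathrm{ord}_{\ell}(\tilde\delta)\neq 2\,\mathrm{ord}_{\ell}(\tilde r)$ for some prime $\ell$. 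The same argument at $\ell$ then yields the required $\mathfrak{m}$, of residue characteristic $\ell$, which is why no characteristic is specified in that clause.

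The main obstacle is the duality step. One has to pass correctly from the hypothesis on $J^{\vee}[\mathfrak{m}]$ to the dual description of $Ta_{\mathfrak{m}}(J)$, transporting the Hecke action to $J^{\vee}$ by Rosati/transpose, which is not trivial since $J^D_Q(M)$ need not be self-dual (Remark~3.2). One also has to verify that dualizing the congruence sequence gives exactly the map $Ta(A)\oplus Ta(B)\to Ta(J)$ and not a twist of it. I would first try to check the relevant exactness directly at the level of $\mathbb{T}_{\mathfrak{m}}$-modules using Gorensteinness of $\mathbb{T}_{\mathfrak{m}}$, which follows from the rank-$2$ freeness as in Mazur's argument.
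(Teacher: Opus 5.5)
Your proposal follows the paper's route in both parts. Part~1 is the contrapositive of Proposition~3.4, and your support argument placing the bad $\mathfrak m$ above $I_f$ fills in what the paper leaves implicit. Part~2 is the paper's computation of $A^D_Q(M)\cap B^D_Q(M)$ as the lattice quotient $H_1/(H_1[I_f]+H_1[\operatorname{Ann}(I_f)])$, carried out with $2$-adic Tate modules. Your handling of the non-square clause at an arbitrary prime $\ell$ is also fine.

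However, the duality step in Part~2 is stated backwards. Write $J=J^D_Q(M)$, with $\mathbb T$ acting on $J^\vee$ by transposes. The Weil pairing $J[2]\times J^\vee[2]\to\mu_2$ identifies $J^\vee[\mathfrak m]$ with the dual of $J[2]/\mathfrak m J[2]=Ta_2(J)/\mathfrak m\,Ta_2(J)$. So $\dim J^\vee[\mathfrak m]=2$, Nakayama, and the rank count give $Ta_{\mathfrak m}(J)\cong\mathbb T_{\mathfrak m}^2$. That is, the Tate module of $J$ is \emph{free}, not isomorphic to $\mathrm{Hom}_{\mathbb Z_2}(\mathbb T_{\mathfrak m},\mathbb Z_2)^2$. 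This is precisely the paper's step $H_1(J,\mathbb Z)\cong H^1(J^\vee,\mathbb Z)$ with $H^1(J^\vee,\mathbb Z)_{\mathfrak m}\cong\mathbb T_{\mathfrak m}^2$.

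The co-free description would instead follow from $\dim J[\mathfrak m]=2$. The two descriptions hold together only when $\mathbb T_{\mathfrak m}$ is Gorenstein, and that is not available here. Freeness of $Ta_{\mathfrak m}(J)$ alone does not give Gorensteinness; Mazur's argument also needs $\dim J[\mathfrak m]=2$. Moreover, $J^D_Q(M)$ need not be self-dual (Remark~3.2), so multiplicity one for $J^\vee$ does not give it for $J$.

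With the correct freeness the rest is immediate and needs no dualizing. One has $Ta_{\mathfrak m}(J)[I_f]=\operatorname{Ann}(I_f)_{\mathfrak m}^2$ and $Ta_{\mathfrak m}(J)[\operatorname{Ann}(I_f)]=(I_f)_{\mathfrak m}^2$, using $\operatorname{Ann}(\operatorname{Ann}(I_f))=I_f$. So the cokernel is $\bigl(\mathbb T/(I_f+\operatorname{Ann}(I_f))\bigr)_{\mathfrak m}^2\cong(\mathcal O_f/R^D_Q(M))_{\mathfrak m}^2$. Your final count happens to survive the slip, because $\#\mathrm{Ext}^1_{\mathbb Z_2}(C_{\mathfrak m},\mathbb Z_2)=\#C_{\mathfrak m}$. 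Still, the intermediate isomorphism you wrote is false in general and should be replaced by the free one.
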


\begin{proof}
	The proof of the first part is quite similar to Proposition 7.2, and we omit it.
	
	For part 2, assume that $\mathfrak{m}$ is a maximal ideal of $\mathbb{T}^D_{Q}(M)$ containing $I_f+\operatorname{Ann}_{\mathbb{T}^D_{Q}(M)}(I_f)$ for which $\dim J^D_{Q}(M)^{\vee}[\mathfrak{m}]=2$. Arguing similarly to Theorem 2.2 of \cite{agashe2008modular},
	\[
	A^D_{Q}(M)\cap B^D_{Q}(M)\cong
	\frac{H_1(J^D_{Q}(M),\mathbb{Z})}
	{H_1(J^D_{Q}(M),\mathbb{Z})[I_f]+H_1(J^D_{Q}(M),\mathbb{Z})[\operatorname{Ann}_{\mathbb{T}^D_{Q}(M)}(I_f)]}.
	\]
	By duality, there is a natural $\mathbb{T}^D_{Q}(M)$-module isomorphism between $H_1(J^D_{Q}(M),\mathbb{Z})$ and $H^1(J^D_{Q}(M)^{\vee},\mathbb{Z})$. Hence,
	
\begin{equation}
	A^D_{Q}(M)\cap B^D_{Q}(M)\cong
	\frac{H^1(J^D_{Q}(M)^{\vee},\mathbb{Z})}
	{H^1(J^D_{Q}(M)^{\vee},\mathbb{Z})[I_f]+H^1(J^D_{Q}(M)^{\vee},\mathbb{Z})[\operatorname{Ann}_{\mathbb{T}^D_{Q}(M)}(I_f)]}.
\end{equation}
	
	Since $\dim J^D_{Q}(M)^{\vee}[\mathfrak{m}]=2$, and $J^D_{Q}(M)^{\vee}[\mathfrak{m}]$ is isomorphic to
	 $$\text{Hom}(	H^1(J^D_{Q}(M)^{\vee},\mathbb{Z})/\mathfrak{m}H^1(J^D_{Q}(M)^{\vee},\mathbb{Z}), ~ \mathbb{T}^D_{Q}(M)/\mathfrak{m}\mathbb{T}^D_{Q}(M)),$$
	 it follows that $\dim_{\mathbb{T}^D_{Q}(M)/\mathfrak{m}\mathbb{T}^D_{Q}(M)} H^1(J^D_{Q}(M)^{\vee},\mathbb{Z})/\mathfrak{m}H^1(J^D_{Q}(M)^{\vee},\mathbb{Z})=2.$
	 
	 Therefore, applying Nakayama's lemma, we obtain a surjection
	 
	\begin{equation}
		    \mathbb{T}^D_{Q}(M)_{\mathfrak{m}}^2 \twoheadrightarrow H^1(J^D_{Q}(M)^{\vee},\mathbb{Z})_{\mathfrak{m}}.
	\end{equation}
 By Jacquet-Langlands and Eichler-Shimura, surjection (10)  induces an isomorphism after tensoring with $\mathbb{Q}_2$. But $\mathbb{T}^D_{Q}(M)_{\mathfrak{m}}^2$ is flat over $\mathbb{Z}_2$, and therefore (10) is an isomorphism. Combining isomorphisms (9) and (10) above,
	\[
	A^D_{Q}(M)\cap B^D_{Q}(M)\cong
	\Bigl(\frac{\mathbb{T}^D_{Q}(M)}{I_f+\operatorname{Ann}_{\mathbb{T}^D_{Q}(M)}(I_f)}\Bigr)_{\mathfrak{m}}^2.
	\]
    Hence,
	\[
	\mathrm{ord}_2(\tilde{\delta}^D_{Q}(M))=2\,\mathrm{ord}_2(\tilde{r}^D_{Q}(M)).
	\]
\end{proof}

\begin{rem}
	We have focused on the prime 2 in Propositions 7.2 and 8.2, since multiplicity one rarely fails at odd primes in the setting considered here.
\end{rem}

%% file: Level_lowering.tex
\section{Applications}

\subsection{Failure of the Multiplicity One}

Using Sage \cite{stein2007sage}, in the following table we have listed some modular abelian varieties $A_D(M) \subseteq J^1_D(M)$ attached to the newforms of level less than 300 for which $\mathrm{ord}_2(\tilde{\delta}^1_D(N)) \neq 2~\mathrm{ord}_2(\tilde{r}^1_{D}(N))$. Hence, by Proposition 8.3, they produce examples of the failure of the Jacobian multiplicity one. For a fixed conductor, there might be more than one such abelian varieties, or there might be different values of $D$ for an abelian variety such that $\mathrm{ord}_2(\tilde{\delta}^1_D(N)) \neq 2~\mathrm{ord}_2(\tilde{r}^1_{D}(N))$, but we list only one example for each level as follows.
\begin{table}[htbp]
	\centering
	\caption{Producing $\mathrm{dim}~{{J}^1_{D}(N)}^{\vee}[\mathfrak{m}]>2$}
	\label{tab:mytable}
	\begin{tabular}{c c c c c c}
		\hline
		$N$ & $D$ & Isogeny class of $A^1_D(N)$ (Cremona label) & Dimension of $A^1_D(N)$ &$\tilde{\delta}^1_D(N)$ & $\tilde{r}^1_{D}(N)$\\
		\hline
		105 & 3 & 105a & 1 & 8 & 4 \\
		165 & 30 & 165a & 2 & 128 & 16\\
		195 & 195 & 195e & 3 & 32 & 8 \\
		219 & 219 & 219e & 6 & 2048 & 32 \\
		238 & 7 & 238a & 1 & 32 & 8\\
		271 & 21 & 273c & 2 & 32 & 8 \\
		273 & 3 & 273c & 2 & 512 & 32 \\
		285 & 5 & 285d & 2 & 14971392 & 5472 \\
		291 & 3 & 291h & 7 & 32768 & 128 \\
		\hline
	\end{tabular}
\end{table}

Numerically, it appears that the failure of Jacobian multiplicity one occurs frequently at maximal ideals of characteristic 2. On the other hand, for any $D$ dividing a square-free level $N$, for $N$ up to 500, the $D$-new modular exponent $\delta^1_D(N)$ is equal to the $D$-new congruence exponent $r^1_D(N)$, hence supporting Conjecture 5.1.7 in \cite{deines2014shimura}.

Now, suppose $N=DM$ is as before, and let $A^D(M)$ be any modular abelian subvariety of $J^D(M)$ attached to a newform of level $DM$. Let $\phi: \Phi_p(J^D(M)) \rightarrow \Phi_p(A^D(M)^{\vee})$ be the induced map on the groups of components of the Néron models at $p$ for a prime $p$ dividing $D$. Let $i_p$, $j_p$, and $h_p$ be the size of the image of $\phi$, the size of the cokernel of $\phi$, and $\langle g_p, g_p \rangle$, respectively, where $\langle \cdot,\cdot \rangle$ is the monodromy pairing on the character group $\mathcal{X}_p(J^D(M))$, and $g_p$ is a generator of $\Lambda^d(\mathcal{X}_f)$, the $d$-th exterior power of the $f$-isotypical component $\mathcal{X}_f$ of $\mathcal{X}_p(J^D(M))$ (see Section 2 of \cite{takahashi2001degrees} for detailed definitions).
 We have the following result:

\begin{thm}
	
	\begin{enumerate}
		\item If 
		$h_p/i_p > \tilde{r}^D(M)$, then there is a maximal ideal $\mathfrak{m}$ of $\mathbb{T}^D(M)$ containing $I_f$ such that $\dim J^D(M)[\mathfrak{m}]>2$;
		
		\item If $A$ is an elliptic curve, then $h_p/i_p \leq r^D(M)$ and, if $h_p/i_p = r^D(M)$, then $h_p/i_p = \delta^D(M)$. In particular, if $h_p/i_p = r^D(M)$ in this case, then $j_p=1$, i.e. $\phi: \Phi_p(J^D(M)) \rightarrow \Phi_p(A^D(M))$ is surjective.
	\end{enumerate}
	
\end{thm}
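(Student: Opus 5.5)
Part 1 will be argued by contraposition through the saturation mechanism of Proposition 3.4. We suppose that $\dim J^D(M)[\mathfrak m]=2$ for every maximal ideal $\mathfrak m\supseteq I_f$ and aim to show $h_p/i_p\le \tilde r^D(M)$. The key input is a formula of Papikian--Rabinoff (their description of the component group map via denominators of the idempotent $e_f$, cf. also Takahashi's Section 2). It should express $h_p$ as the index of the image of $\mathcal X_f$ under orthogonal projection inside $\mathcal X_p(J^D(M))\otimes\mathbb Q$ relative to $\mathcal X_f$. It should express $i_p$ (equivalently $j_p$) as the ratio of the denominator of $e_f$ in $\mathrm{End}(\mathcal X_p(J^D(M)))$ to its denominator in $\mathbb T^D(M)\otimes\mathbb Q\cap\mathrm{End}(J^D(M))$. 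Concretely, I would show
\[
h_p/i_p=\#\bigl(\mathcal X_p(J^D(M))\cap(\mathcal X_f\otimes\mathbb Q)\text{-projection cokernel}\bigr)/i_p,
\]
and that this is bounded by the index of $\mathbb T^D(M)e_f\cap \mathbb T^D(M)$-type congruence modules. The cleanest form is that $h_p/i_p$ divides $\#(\mathcal O_f/R')$, where $R'$ is the congruence ideal computed inside the saturation $\mathbb T^D(M)'$ (or inside $\mathrm{End}(\mathcal X_p)$).

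Under the multiplicity one hypothesis, the argument of Proposition 5.8(d) adapted to Cerednik--Drinfeld models gives $\mathcal X_p(J^D(M))_{\mathfrak m}\cong\mathbb T^D(M)_{\mathfrak m}$ for each such $\mathfrak m$; alternatively, Helm's Mazur-principle bound (Lemma 6.5 of \cite{helm2007maps}) gives $\dim\mathcal X_p/\mathfrak m\mathcal X_p\le 1$ when $\dim J^D(M)[\mathfrak m]=2$, which Nakayama upgrades to freeness. Faithfulness of the action on the character group (purely toric reduction at $p$) then gives $\mathbb T'_{\mathfrak m}=\mathbb T_{\mathfrak m}$, exactly as in the proof of Proposition 3.4. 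Consequently the denominator of $e_f$ in $\mathrm{End}(\mathcal X_p)_{\mathfrak m}$ equals that in $\mathbb T_{\mathfrak m}$, and the relevant congruence module equals $\mathcal O_f/R^D(M)$ locally at $\mathfrak m$. Primes $\mathfrak m$ not containing $I_f$ do not contribute, since $\mathcal O_f/R^D(M)$ is supported only at maximal ideals containing $I_f+\mathrm{Ann}(I_f)$. Assembling over all $\mathfrak m$ gives $h_p/i_p\le\tilde r^D(M)$, a contradiction.

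For part 2 ($A$ elliptic, $\mathcal O_f=\mathbb Z$), I would use the same identification without any multiplicity hypothesis. The pairing value $h_p=\langle g_p,g_p\rangle$ satisfies $h_p/i_p=\#(\mathbb Z/R_{\mathcal X})$, where $R_{\mathcal X}=\pi_A(\ker\pi_B\text{ on }\mathcal X_p)$ is the congruence ideal computed in $\mathrm{End}(\mathcal X_p)$ restricted to $\mathbb T'$. We then have the chain
\[
\mathbb T\subseteq\mathbb T'\subseteq\mathbb T\otimes\mathbb Q\cap\mathrm{End}(\mathcal X_p),
\]
and so
\[
R^D(M)\subseteq S^D(M)=\mathbb T'\cap\pi_A(\mathbb T)\subseteq R_{\mathcal X}.
\]
This yields the divisibilities $h_p/i_p\mid\delta^D(M)\mid r^D(M)$. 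In particular $h_p/i_p\le r^D(M)$, and equality with $r^D(M)$ forces $h_p/i_p=\delta^D(M)$. Finally, the Ribet--Takahashi/Papikian--Rabinoff degree formula $\delta^D(M)=h_p/(i_p\,j_p)\cdot(\text{correction})$, or more directly $\#\Phi_p(A)=h_p\cdot(\text{stuff})$ with $j_p=\#\Phi_p(A)/i_p$, should give $j_p=1$ once $h_p/i_p=\delta^D(M)$.

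The main obstacle is making the Papikian--Rabinoff denominator formula match precisely with $R_{\mathcal X}$ and $S^D(M)$, including identifying $\delta^D(M)$ with the $\mathrm{End}(J)$-denominator of $e_f$ and $h_p/i_p$ with the $\mathrm{End}(\mathcal X_p)$-denominator. I would first try to verify this in the elliptic case using the explicit description $\Phi_p(A)\cong\mathcal X_A^\vee/\text{image of }\mathcal X_A$ under the monodromy pairing, and only then generalize the bookkeeping to part 1.
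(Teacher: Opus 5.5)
The step your whole argument rests on is never established, and in the form you need for Part~1 it is false. You want $h_p/i_p$ to divide $\#(\mathcal O_f/R')$ for a congruence ideal $R'$ computed in a ring containing $\mathbb T^D(M)$, namely its saturation in $\mathrm{End}(J^D(M))$ or in $\mathrm{End}(\mathcal X_p(J^D(M)))$. Any such $R'$ satisfies $R^D(M)\subseteq R'\subseteq\mathcal O_f$. That would give $h_p/i_p\mid\tilde r^D(M)$ with no hypothesis at all, so your multiplicity one input would be superfluous and Part~1 vacuous. Yet Table~2 records cases such as $N=105$, $D=35$, $p=5$ with $h_p/i_p=20>10=\tilde r^D(M)$.

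The relation the paper actually uses is the Takahashi--Deines degree formula $\sqrt{\tilde\delta^D(M)}=(h_p/i_p)\,j_p$. It bounds $h_p/i_p$ by the square root of the \emph{order} of $A^D(M)\cap B^D(M)$, and by the argument of Proposition~8.3(2) this can exceed $\tilde r^D(M)$ only when multiplicity one fails. You misquote it as $\delta^D(M)=h_p/(i_pj_p)\cdot(\text{correction})$. That version is incompatible with your own chain $h_p/i_p\mid\delta^D(M)$ unless $j_p=1$.

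In the elliptic case, your identification of $h_p/i_p$ with the denominator of $e$ in $\mathrm{End}(\mathcal X_p(J^D(M)))$ is true. The ideal must be computed in $(\mathbb T^D(M)\otimes\mathbb Q)\cap\mathrm{End}(\mathcal X_p(J^D(M)))$, not in $\mathbb T'$; otherwise it would equal $S^D(M)$ and force $j_p=1$ always, contradicting Theorem~C. But this identification is obtained by combining the very degree formula above with the Papikian--Rabinoff identity $j_p=n/r$ from the proof of Theorem~D, so it cannot substitute for the formula.

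Separately, your deduction $\dim J^D(M)[\mathfrak m]=2\Rightarrow\mathcal X_p(J^D(M))_{\mathfrak m}\cong\mathbb T^D(M)_{\mathfrak m}$ is unjustified. The Mazur principle only gives $\dim\mathcal X_p/\mathfrak m\mathcal X_p\le 2$, and dimension $1$ requires controllability at $p$. Proposition~5.8(d) goes in the opposite direction.

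With the correct degree formula, neither part needs character-group saturation. Since $j_p\ge 1$, we have $h_p/i_p\le\sqrt{\tilde\delta^D(M)}$.

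For Part~1, suppose $\dim J^D(M)[\mathfrak m]=2$ for every maximal $\mathfrak m\supseteq I_f+\operatorname{Ann}(I_f)$. The argument of Proposition~8.3(2) (Nakayama on $H^1$, using $J^D(M)\cong J^D(M)^{\vee}$) then gives $A^D(M)\cap B^D(M)\cong\bigl(\mathbb T^D(M)/(I_f+\operatorname{Ann}(I_f))\bigr)^2$ locally at each such $\mathfrak m$. Hence $\sqrt{\tilde\delta^D(M)}=\tilde r^D(M)$, contradicting $h_p/i_p>\tilde r^D(M)$.

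For Part~2, combine $\delta^D(M)=(h_p/i_p)\,j_p$ with $\delta^D(M)\mid r^D(M)$, which follows from $R^D(M)\subseteq S^D(M)$ as in Proposition~3.4. This gives $h_p/i_p\le\delta^D(M)\le r^D(M)$, and $h_p/i_p=r^D(M)$ forces $h_p/i_p=\delta^D(M)$ and $j_p=1$.
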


\begin{proof}
	
	\begin{enumerate}
		\item  It is known that $\sqrt{\tilde{\delta}^D(M)}=(h_p/i_p)\, j_p$ for the Shimura degree of the abelian variety $A^D(M)$ as a generalization of Theorem 2.3 in \cite{takahashi2001degrees}, which is for the case of elliptic curves (see Section 3.2 of \cite{deines2014shimura} for more details). Now, if $h_p/i_p > \tilde{r}^D(M)$, then $\sqrt{\tilde{\delta}^D(M)}>\tilde{r}^D(M)$. Hence, by Proposition 8.3, the result follows.
		
		\item In this case, $r^D(M)=\tilde{r}^D(M)$ and $\delta^D(M)^2=\tilde{\delta}^D(M)$. The theorem follows from the formula $\sqrt{\tilde{\delta}^D(M)} = (h_p/i_p)\, j_p$ and $\delta^D(M) \leq r^D(M)$.
	\end{enumerate}
	
\end{proof}

We have used Magma \cite{bosma1997magma} and an adaptation of the code in Appendix A.1.1 of \cite{deines2014shimura}, to higher dimensions, built on Brandt modules, to compute the quantities $h_p$ and $i_p$ attached to different simple optimal quotients of $J^D(M)$ for varying $D$ and $M$. In the following table, we have listed some cases where $\mathrm{ord}_2(h_p/i_p) > \mathrm{ord}_2(\tilde{r}^D(M))$, and hence, by the above theorem, they provide examples of the failure of multiplicity one for $J^D(M)$ in characteristic $2$.

\begin{table}[htbp]
	\centering
	\caption{Examples of $\mathrm{dim}~J^D(M)[\mathfrak{m}]>2$}
	\label{tab:mytable}
	\begin{tabular}{c c c c c c c}
		\hline
		$N$ & $D$ & $p$ & Isogeny class of $A^D(M)$ (Cremona label) & Dimension of $A^D(M)$ & $h_p/i_p$ & $\tilde{r}^{D}(M)$\\
		\hline
		105 & 35 & 5 & 105b & 2 & $20$ & 10 \\
		138 & 69 & 23 & 138d & 2 & $16$ & 8 \\
		154 & 77 & 7 & 154d & 2 & $16$ & 8 \\
		165 & 55 & 11 & 165c & 3 & $32$ & 16 \\
		221 & 221 & 17 & 221g & 6 & $256$ & 128\\
		273 & 39 & 3 & 273e & 4 & $128$ & 64 \\
		282 & 141 & 47 & 282e & 3 & $320$ & 160\\
		310 & 155 & 5 & 310e & 3 & $320$ & 160\\
		322 & 161 & 7 & 322g & 3 & $1408$ & 704 \\
		\hline
	\end{tabular}
\end{table}

\subsection{On A Conjecture of Ribet and Takahashi}
\subsubsection{Counterexamples}
 Let $N=DM$ be as before, and let $A^D(M)$ be the elliptic curve in Definition 3.1 sitting inside $J^D(M)$ as a subvariety. Since both $J^D(M)$ and $A^D(M)$ are self-dual, we can view $A^D(M)$ as a quotient of $J^D(M)$. In \cite{article2}, it is proved that, if the Galois representation attached to $A^D(M)[l]$ is irreducible for a prime $l$, then the $l$ part of the cokernel of the map $ \phi: \Phi_p(J^D(M)) \rightarrow \Phi_p(A^D(M))$, as in Section 9.1, is trivial. Therefore, the size of the cokernel can only be divisible by $2, 3, 5,$ or $7$ since $A^D(M)[l]$ is known to be irreducible for $l>7$ (e.g. Theorem 2.9 of \cite{darmon2000fermat}). The vanishing of the cokernel unconditionally or more generally for any Jacobian variety with toric reduction at $p$, is known in the literature as the Ribet--Takahashi Conjecture. In \cite{papikian2016optimal}, a counterexample to this conjecture was established by constructing an explicit abelian variety out of a product of two elliptic curves (see Section 5 of $loc.~cit.$ for more details). In $loc.~cit.$, it was asked whether the conjecture is true for the case of $J^D(M)$ and a prime $p$ dividing $D$. Here we prove that the conjecture in this case is not true in general:
 
 \begin{thmC}
 	The map $ \phi: \Phi_p(J^D(M)) \rightarrow \Phi_p(A^D(M))$ is not necessarily surjective.
 \end{thmC}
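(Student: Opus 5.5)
The plan is to exhibit an explicit counterexample. The main tool is the degree formula used in the proof of Theorem~9.1: for an elliptic curve $A^D(M)\subseteq J^D(M)$ and any prime $p\mid D$,
\[
\delta^D(M)=\sqrt{\tilde{\delta}^D(M)}=(h_p/i_p)\,j_p .
\]
The left-hand side does not depend on $p$. So if $D=pq$ (or more generally $D$ has at least two prime factors), applying the formula at $p$ and at $q$ gives
\[
(h_p/i_p)\,j_p=(h_q/i_q)\,j_q .
\]
In particular, $j_p\geq 1$ and $j_q\geq 1$ force $\delta^D(M)\geq \max(h_p/i_p,\,h_q/i_q)$. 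Hence it suffices to find a single elliptic curve and two primes $p,q\mid D$ with $h_q/i_q>h_p/i_p$. Then
\[
j_p=\delta^D(M)\big/(h_p/i_p)\;\geq\;(h_q/i_q)\big/(h_p/i_p)\;>\;1,
\]
so $\phi:\Phi_p(J^D(M))\to\Phi_p(A^D(M))$ is not surjective.

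An alternative route avoids the second prime. Compute $\delta^D(M)$ independently: Theorem~A gives $r^D(M)=r^1_D(N)$, which is computable from classical modular symbols, and Theorem~B then gives $\delta^D(M)$ at odd primes $l$ under its hypotheses. Comparing with $h_p/i_p$ again detects $j_p>1$. However, the results of \cite{article2} and \cite{darmon2000fermat} show that the cokernel can only be nontrivial at $l\in\{2,3,5,7\}$ with $A^D(M)[l]$ reducible. That is exactly where Theorem~B gives no control, and at $l=2$ we have only Proposition~7.1. So I would rely primarily on the two-prime comparison, which needs no multiplicity one at all. I would keep the congruence number only as a sanity check through the inequality $h_p/i_p\le r^D(M)$ of Theorem~9.1(2).

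Concretely, the search runs over square-free $N$ with $D$ a product of two primes. To make a cokernel plausible, we restrict to curves whose isogeny class has a rational $2$- or $3$-isogeny. For each such curve we compute the character groups $\mathcal{X}_p(J^D(M))$ and $\mathcal{X}_q(J^D(M))$ via Čerednik--Drinfeld, realized as Brandt modules of definite quaternion algebras of discriminant $D/p$ (respectively $D/q$) at the appropriate level. From these we take the $f$-isotypic line with generator $g_p$, compute $h_p=\langle g_p,g_p\rangle$ from the monodromy pairing, and compute $i_p$ as the size of the image of $\Phi_p(J^D(M))$. This is the Magma code adapted from \cite{deines2014shimura} that was already used for Table~2. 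We then check whether $h_p/i_p\neq h_q/i_q$. As a cross-check, we verify that $\delta^D(M)$ obtained as $\max$ is consistent with $\delta^D(M)\mid r^D(M)$.

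The main obstacle is computational and conceptual bookkeeping. We must make sure that $i_p$ really is the order of the image of the map on component groups, equivalently the index of the relevant sublattices in the character group, with the normalizations of \cite{takahashi2001degrees}. We must also make sure that the optimal curve $A^D(M)$, rather than another curve in the isogeny class, is the one being measured. If no example with $h_p/i_p\ne h_q/i_q$ turns up in small levels, the fallback is to compute $\delta^D(M)$ directly, either as the exponent of $A^D(M)\cap B^D(M)$ via the Brandt-module lattice or through Theorems~A and~B at $l=3$. We then look for a curve with $\mathrm{ord}_l(\delta^D(M))>\mathrm{ord}_l(h_p/i_p)$.
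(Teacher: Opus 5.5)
Your argument is the paper's: since $\delta^D(M)=(h_p/i_p)\,j_p$ does not depend on $p\mid D$, two primes $p,q\mid D$ with $h_p/i_p\neq h_q/i_q$ force a nontrivial cokernel. The only thing your proposal leaves open is the witness itself, which the paper supplies by a Magma computation for 102c with $D=6$, $M=17$, giving $h_2/i_2=2$ and $h_3/i_3=1$. Your use of $j_p,j_q\geq 1$ sharpens the paper's ``either $j_p\neq 1$ or $j_q\neq 1$'' to $j_3=2j_2\geq 2$, so the map already fails to be surjective at $p=3$. None of your fallback routes is needed; the one through Theorem~B at $l=3$ would in any case only bound $\mathrm{ord}_l(\delta^D(M))$ from above, which cannot certify $j_p>1$.
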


\begin{proof}
	Recall that $\delta^D(M)= (h_p/i_p).j_p$ is independent of the prime $p$ dividing $DM$. Now, if $p$ and $q$ are two different primes dividing $D$ such that $h_p/i_p \neq h_q/i_q$, then it follows that either $j_p \neq 1$ or $j_q \neq 1$. The theorem follows from either case. In Magma, we have computed that for the elliptic 102c(1,102) with  $D=6~(\mathrm{and~ hence}~M=17)$, one finds that $h_2/i_2=2$ while $h_3/i_3=1$.
\end{proof}
	
	\begin{rem}
		The elliptic curve 102c(1,102) in the above theorem is the smallest example that $j_p \neq 1$, in the sense that for any elliptic curve $E$ of a square-free conductor $n < 102$ and any $D$ dividing $N$ with an even number of prime factors, it is computed in Magma that $h_p/i_p = r^D(M)$ for all primes $p$ dividing $D$. Hence, by Theorem 9.1, part 2, $j_p=1$ for all such primes.
	\end{rem}
		The method of above theorem can also be applied to the higher dimensional abelian varieties to obtain cases where the map on component groups is not surjective. In other words, one can compute the quantity $h_p/i_p$ attached to the higher dimensional abelian varieties for different primes $p$ dividing $D$. Using this method, we have obtained the following table of some modular abelian varieties $A^D(M)$ attached to the newforms such that $ \phi: \Phi_p(J^D(M)) \rightarrow \Phi_p(A^D(M)^{\vee})$ is not surjective.
\begin{table}[htbp]
	\centering
	\caption{$j_p \neq 1$}
	\label{tab:mytable}
	\begin{tabular}{c c c c c c c c}
		
		\hline
		
		$N$ & $D$ & Isogeny class of $A^D(M)$ (Cremona label) & Dimension of $A^D(M)$&$p$ & $q$ & $h_p/i_p$ & $h_q/i_q$\\
		\hline
		102 & 6 & 102c & 1 & 2 & 3 & 2 & 1 \\
		210 & 14 & 210b & 1 & 2 & 7 & 4 & 2 \\
		273 & 21 & 273e & 4 & 3 & 7 & 64 & 32 \\
		221 & 221 & 221e & 2 & 13 & 17 & 40 & 80 \\
		322 & 161 & 322g & 3 & 7 & 23 & 1408 & 704 \\
		357 & 119 &  357h & 4 & 7 & 17 &  14272 & 28544\\	
		
		\hline
	\end{tabular}
\end{table}

\begin{rem}
	In any of the examples above, there are non-Eisenstein maximal ideals of characteristic $2$ attached to the newform $f$ associated with abelian variety $A$, since by the results of \cite{ribet1997parametrizations} and \cite{khare2003isomorphisms}, if the Galois representations attached to all maximal ideals containing $I_f$ are irreducible, then $ord_2(j_p)=1$. In particular, when $A$ is an elliptic curve, this implies that $E[2]$ is reducible as a $G_{\mathbb{Q}}$- module, i.e. there is a rational point of $E$ of order $2$.
\end{rem}
\subsubsection{On $j_p=1$ at Eisenstein ideals}

By the work of Ribet--Takahashi--Khare
(\cite{ribet1997parametrizations,khare2003isomorphisms}),
it is already known that if the residual representation attached to a maximal ideal $\mathfrak m$ is irreducible, then the cokernel of
\[
\phi:\Phi_p(J^D(M))
\longrightarrow
\Phi_p(A^D(M))
\]
is trivial locally at $\mathfrak m$. In this section, we focus on the local surjectivity of $\phi$ at Eisenstein maximal ideals.
Before stating the theorem, we introduce the notion of \emph{the lattice congruence ideal} and \emph{the lattice intersection ideal} associated with $A^D(M)$ (cf. Definition 3.1).

\begin{Def}
	Consider the decomposition $J^D(M)=A^D(M)+B^D(M)$, where $B^D(M)=IJ^D(M)$ with $I=\mathrm{Ker}(\pi_{A^D(M)}: \mathbb{T}^D(M) \rightarrow \mathrm{End}(A^D(M)))$. Similarly, one has $\pi_{B^D(M)}: \mathbb{T}^D(M) \rightarrow \mathrm{End}(B^D(M))$, and let $\mathbb{T}^D(M)_{A^D(M)}$ and $\mathbb{T}^D(M)_{B^D(M)}$ be $\pi_{A^D(M)}(\mathbb{T}^D(M))$ and $\pi_{B^D(M)}(\mathbb{T}^D(M))$, respectively. Let $p$ be a prime dividing $DM$. One can similarly consider the natural maps
	\[
	\pi_{A^D(M)}^{\mathcal{X}}: \mathbb{T}^D(M) \rightarrow \mathrm{End}(\mathcal{X}_p(J^D(M))[I])
	\]
	and
	\[
	\pi_{B^D(M)}^{\mathcal{X}}: \mathbb{T}^D(M) \rightarrow \mathrm{End}(\mathcal{X}_p(J^D(M))[\mathrm{Ann}_{\mathbb{T}^D(M)}(I)]).
	\]
	
	\begin{enumerate}
		\item The \emph{lattice congruence ideal} associated with $A^D(M)$ is
		\[
		R^D(M)^{\mathcal{X}} := \pi_{A^D(M)}^{\mathcal{X}}(\ker(\pi_{B^D(M)}^{\mathcal{X}}));
		\]
		
		\item The \emph{lattice intersection ideal} associated with $A^D(M)$ is
		\[
		S^D(M)^{\mathcal{X}} := \mathbb{T}^D(M)_{A^D(M)} \cap \mathrm{End}_{\mathbb{T}^D(M)}(\mathcal{X}_p(J^D(M))),
		\]
		where $\mathrm{End}_{\mathbb{T}^D(M)}(\mathcal{X}_p(J^D(M)))$ is the subring of $\mathbb{T}^D(M)$-equivariant endomorphisms, and the intersection is taken in
		\[
		\mathrm{End}(\mathcal{X}_p(J^D(M))[I]) \oplus \mathrm{End}(\mathcal{X}_p(J^D(M))[\mathrm{Ann}_{\mathbb{T}^D(M)}(I)])
		\]
		(note that there is a natural $\mathbb{T}^D(M)$-equivariant injection
		\[
		\mathrm{End}_{\mathbb{T}^D(M)}(\mathcal{X}_p(J^D(M)))\rightarrow
		\mathrm{End}(\mathcal{X}_p(J^D(M))[I]) \oplus
		\mathrm{End}(\mathcal{X}_p(J^D(M))[\mathrm{Ann}_{\mathbb{T}^D(M)}(I)]).
		\]
	\end{enumerate}
\end{Def}

Let $\mathbb{T}^D(M)^{\mathcal{X}}$ denote the saturation of $\mathbb{T}^D(M)$ in $\mathrm{End}_{\mathbb{T}^D(M)}(\mathcal{X}_p(J^D(M)))$, i.e.,
\[
(\mathbb{T}^D(M)\otimes \mathbb{Q}) \cap \mathrm{End}_{\mathbb{T}^D(M)}(\mathcal{X}_p(J^D(M))).
\]

We have the following lemma:

\begin{lem}
	There is a $\mathbb{T}^D(M)$-equivariant injection
	\[
      S^D(M)^{\mathcal X}/R^D(M)^{\mathcal X}
      	\hookrightarrow \mathbb{T}^D(M)^{\mathcal{X}}/\mathbb{T}^D(M).
	\]
\end{lem}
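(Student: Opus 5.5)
The argument will mirror the second paragraph of the proof of Proposition 3.4, with $\mathrm{End}(J^D_Q(M))$ replaced by the ring $\mathcal{E}:=\mathrm{End}(\mathcal{X}_p(J^D(M))[I])\oplus\mathrm{End}(\mathcal{X}_p(J^D(M))[\mathrm{Ann}_{\mathbb{T}^D(M)}(I)])$. Write $\mathcal{X}=\mathcal{X}_p(J^D(M))$, $\mathcal{X}_A=\mathcal{X}[I]$ and $\mathcal{X}_B=\mathcal{X}[\mathrm{Ann}(I)]$. We embed $\mathbb{T}^D(M)$, $\mathbb{T}^D(M)^{\mathcal{X}}$ and $\mathbb{T}^D(M)_{A^D(M)}$ (the latter acting on $\mathcal{X}_A$ and by $0$ on $\mathcal{X}_B$) into $\mathcal{E}\otimes\mathbb{Q}$.

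\textbf{Step 1: the embedding is injective.} First check that $\mathcal{X}_A\oplus\mathcal{X}_B\hookrightarrow\mathcal{X}$ has finite cokernel. Tensoring with $\mathbb{Q}$ splits $\mathcal{X}\otimes\mathbb{Q}$ into its $I$-part and its $\mathrm{Ann}(I)$-part, because $\mathbb{T}^D(M)\otimes\mathbb{Q}\cong(\mathbb{T}/I)\otimes\mathbb{Q}\times(\mathbb{T}/\mathrm{Ann}(I))\otimes\mathbb{Q}$. The toric reduction at $p$ makes $\mathbb{T}^D(M)$ act faithfully on $\mathcal{X}$, and this is needed so that the factors are nonzero and $\mathbb{T}$ injects. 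Hence restriction gives the injection $\mathrm{End}_{\mathbb{T}}(\mathcal{X})\hookrightarrow\mathcal{E}$ asserted in the definition, and $\mathbb{T}^D(M)\subseteq\mathbb{T}^D(M)^{\mathcal{X}}\subseteq\mathcal{E}$.

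\textbf{Step 2: identify the two ideals as intersections inside $\mathcal{E}\otimes\mathbb{Q}$.}
\begin{enumerate}
\item $R^D(M)^{\mathcal{X}}=\mathbb{T}^D(M)\cap\mathbb{T}^D(M)_{A^D(M)}$. An element $t\in\ker\pi^{\mathcal{X}}_{B}$ acts on $\mathcal{E}$ as $(\pi^{\mathcal{X}}_A(t),0)$, so $\pi^{\mathcal{X}}_A(t)$, read as an element of $\mathbb{T}^D(M)_{A^D(M)}$, coincides with $t$. Conversely, an element of $\mathbb{T}$ whose $B$-component vanishes lies in $\ker\pi^{\mathcal{X}}_B$. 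The identification of $\pi^{\mathcal{X}}_A(\mathbb{T})$ with $\mathbb{T}/I=\mathbb{T}^D(M)_{A^D(M)}$ again uses the faithfulness of the action on $\mathcal{X}_A\otimes\mathbb{Q}$.
\item $S^D(M)^{\mathcal{X}}=\mathbb{T}^D(M)^{\mathcal{X}}\cap\mathbb{T}^D(M)_{A^D(M)}$. Elements of $\mathbb{T}_{A}$ lie in $\mathbb{T}\otimes\mathbb{Q}$, since $\mathbb{T}\otimes\mathbb{Q}$ is a product and $(a,0)$ belongs to it. So an element of $\mathbb{T}_A$ lying in $\mathrm{End}_{\mathbb{T}}(\mathcal{X})$ automatically lies in the saturation $\mathbb{T}^{\mathcal{X}}$; the reverse inclusion is trivial.
\end{enumerate}
From these two descriptions, $R^D(M)^{\mathcal{X}}=\mathbb{T}^D(M)\cap S^D(M)^{\mathcal{X}}$.

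\textbf{Step 3: conclude.} As in Proposition 3.4, the second isomorphism theorem gives
\[
S^{\mathcal{X}}/R^{\mathcal{X}}=S^{\mathcal{X}}/(S^{\mathcal{X}}\cap\mathbb{T})\cong(S^{\mathcal{X}}+\mathbb{T})/\mathbb{T}\hookrightarrow\mathbb{T}^{\mathcal{X}}/\mathbb{T},
\]
where the last inclusion holds because $S^{\mathcal{X}}\subseteq\mathbb{T}^{\mathcal{X}}$. Every map here is a map of $\mathbb{T}^D(M)$-modules, since all the objects are $\mathbb{T}$-submodules of $\mathcal{E}\otimes\mathbb{Q}$. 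This gives the required $\mathbb{T}^D(M)$-equivariant injection.

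\textbf{Main obstacle.} The delicate point is Step 1 together with the bookkeeping in Step 2. One must verify that $\mathbb{T}^D(M)$ acts faithfully on $\mathcal{X}$, which comes from the purely toric reduction at $p\mid D$. One must also make sure the various identifications of $\mathbb{T}_{A}$ with a subring of $\mathcal{E}\otimes\mathbb{Q}$ are compatible, so that the intersections are taken in a single common ambient ring.
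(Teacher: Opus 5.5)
Your proposal is correct and follows the paper's proof. Both identify $R^D(M)^{\mathcal X}=S^D(M)^{\mathcal X}\cap\mathbb{T}^D(M)$ and $S^D(M)^{\mathcal X}=\mathbb{T}^D(M)_{A^D(M)}\cap\mathbb{T}^D(M)^{\mathcal X}$ inside $\mathrm{End}(\mathcal X_p(J^D(M))[I])\oplus\mathrm{End}(\mathcal X_p(J^D(M))[\mathrm{Ann}_{\mathbb{T}^D(M)}(I)])$ and then apply the second isomorphism theorem; your Steps 1 and 2 simply supply details the paper leaves implicit (faithfulness from purely toric reduction at $p\mid D$, the finite-index decomposition $\mathcal X[I]\oplus\mathcal X[\mathrm{Ann}(I)]\subseteq\mathcal X$, and $\mathbb{T}^D(M)_{A^D(M)}\subseteq\mathbb{T}^D(M)\otimes\mathbb{Q}$).
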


\begin{proof}
	Note that inside
	\[
	\mathrm{End}(\mathcal{X}_p(J^D(M))[I])\oplus \mathrm{End}(\mathcal{X}_p(J^D(M))[\mathrm{Ann}_{\mathbb{T}^D(M)}(I)]),
	\]
	we have $R^D(M)^{\mathcal{X}} = S^D(M)^{\mathcal{X}} \cap \mathbb{T}^D(M)$ and
	\[
	S^D(M)^{\mathcal{X}} = \mathbb{T}^D(M)_{A^D(M)} \cap \mathbb{T}^D(M)^{\mathcal{X}}.
	\]
	Hence,
	\[
	S^D(M)^{\mathcal{X}}/R^D(M)^{\mathcal{X}}
	= S^D(M)^{\mathcal{X}}/(S^D(M)^{\mathcal{X}} \cap \mathbb{T}^D(M))
	\cong (S^D(M)^{\mathcal{X}}+\mathbb{T}^D(M))/\mathbb{T}^D(M)
	\hookrightarrow \mathbb{T}^D(M)^{\mathcal{X}}/\mathbb{T}^D(M).
	\]
\end{proof}

\begin{rem}
	If $p$ divides $D$, since $J^D(M)$ has purely toric reduction at $p$, it follows that $\mathbb{T}^D(M)_{A^D(M)}$ is a subring of $\mathrm{End}(\mathcal{X}_p(J^D(M))[I])$, and similarly for $\mathbb{T}^D(M)_{B^D(M)}$. Also, note that the maps $\pi_{A^D(M)}^{\mathcal{X}}$ and $\pi_{B^D(M)}^{\mathcal{X}}$ factor through $\pi_{A^D(M)}$ and $\pi_{B^D(M)}$ via the natural inclusions. Hence, in this case, the congruence ideal and the lattice congruence ideal are equal, i.e.,
	\[
	R^D(M)^{\mathcal{X}} = R^D(M).
	\]
\end{rem}

Now we restate and prove Theorem $D$ of the introduction:
\begin{thmD}
	Let $A^D(M)$ be an elliptic curve attached to a newform $f$, viewed as an optimal quotient of $J^D(M)$, and let $p$ be a prime dividing $D$. Let $l$ be a prime number. If, for the maximal ideal $\mathfrak{m}$ corresponding to $A^D(M)[l]$,
	\[
	\mathcal{X}_p(J^D(M))_{\mathfrak{m}}
	\cong
	\mathbb{T}^D(M)_{\mathfrak{m}},
	\]
	then the induced map
	\[
	\phi:\Phi_p(J^D(M))\longrightarrow \Phi_p(A^D(M))
	\]
	is surjective locally at $l$.
\end{thmD}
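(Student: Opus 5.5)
The plan is to combine the Ribet--Takahashi/Deines formula $\delta^D(M)=(h_p/i_p)\,j_p$ (used in the proof of Theorem 9.1) with a lattice version of the Agashe--Ribet--Stein argument of Proposition 3.4, run on the character group $\mathcal{X}_p(J^D(M))$ in place of the Tate module. Since $j_p$ is the order of the cokernel of $\phi$, it suffices to prove
\[
\mathrm{ord}_l(h_p/i_p)\geq \mathrm{ord}_l(\delta^D(M)),
\]
or more precisely that $h_p/i_p$ and $\delta^D(M)$ have the same $l$-adic valuation. We already have $\delta^D(M)\mid r^D(M)$ and $j_p\geq 1$. The missing input is the identification of $h_p/i_p$, locally at $l$, with an ideal-theoretic quantity attached to $\mathcal{X}_p(J^D(M))$. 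For this I would use the formula of Papikian--Rabinoff, which expresses $h_p/i_p$ (equivalently the cokernel of $\phi$) through the denominator of the idempotent $e_f$ in $\mathrm{End}_{\mathbb{T}^D(M)}(\mathcal{X}_p(J^D(M)))\otimes\mathbb{Q}$, compared with its denominator in $\mathrm{End}(J^D(M))\otimes\mathbb{Q}$.

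\emph{Step 1.} Show that the $l$-part of $h_p/i_p$ equals the exponent of $\mathbb{Z}/S^D(M)^{\mathcal X}$ at $l$. Take $\mathcal{X}_f=\mathcal{X}_p(J^D(M))[I]$, a rank one lattice, and let $g_p$ be a generator. The image of $\mathcal{X}_p(J^D(M))$ under projection to $\mathcal{X}_f\otimes\mathbb{Q}$ is $\frac{1}{h_p/i_p}$-scaled relative to $\mathcal{X}_f$, up to the monodromy pairing normalization. Hence the smallest integer $n$ for which $n e_f$ preserves $\mathcal{X}_p(J^D(M))$ is $h_p/i_p$, and $S^D(M)^{\mathcal X}=n\mathbb{Z}$ is exactly this denominator ideal. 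The fact needed is that $(\mathcal{X}_f\oplus \mathcal{X}_p[\mathrm{Ann}(I)])\subset \mathcal{X}_p$ has cyclic quotient of order $h_p/i_p$, which is the content of Takahashi's Section 2.

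\emph{Step 2.} Apply Lemma 9.3. The hypothesis $\mathcal{X}_p(J^D(M))_{\mathfrak m}\cong\mathbb{T}^D(M)_{\mathfrak m}$ gives
\[
\mathrm{End}_{\mathbb{T}^D(M)}(\mathcal{X}_p(J^D(M)))_{\mathfrak m}=\mathbb{T}^D(M)_{\mathfrak m},
\]
so $\mathbb{T}^D(M)^{\mathcal X}_{\mathfrak m}=\mathbb{T}^D(M)_{\mathfrak m}$. Since $A^D(M)$ is an elliptic curve, $\mathbb{T}\otimes\mathbb{Z}_l=\prod_{\mathfrak m'\ni l}\mathbb{T}_{\mathfrak m'}$, and only $\mathfrak m$ contains $I$. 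Therefore Lemma 9.3 gives
\[
\mathrm{ord}_l(\#\,\mathbb{Z}/S^{\mathcal X})=\mathrm{ord}_l(\#\,\mathbb{Z}/R^{\mathcal X}).
\]
By Remark 9.4 (recall $p\mid D$), $R^{\mathcal X}=R^D(M)$. Combined with Step 1 this yields $\mathrm{ord}_l(h_p/i_p)=\mathrm{ord}_l(r^D(M))$.

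\emph{Step 3.} Conclude. Theorem 9.1(2) gives $h_p/i_p\le r^D(M)$ together with the chain $\delta^D(M)\mid r^D(M)$. The identity
\[
\mathrm{ord}_l(\delta^D(M))=\mathrm{ord}_l(h_p/i_p)+\mathrm{ord}_l(j_p)
\]
then forces
\[
\mathrm{ord}_l(r^D(M))+\mathrm{ord}_l(j_p)\le \mathrm{ord}_l(r^D(M)),
\]
so $\mathrm{ord}_l(j_p)=0$, which is local surjectivity at $l$. Running the argument of Theorem 9.1(2) locally at $l$ gives the same conclusion.

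The main obstacle is Step 1: identifying $h_p/i_p$ precisely, including the $i_p$ correction coming from the image of the component group map and the monodromy pairing, with the lattice intersection ideal $S^D(M)^{\mathcal X}$. I would first try to read this off from the Papikian--Rabinoff description of $\mathrm{coker}\,\phi$ via denominators of $e_f$ acting on $\mathcal{X}_p$ and on $\mathrm{Hom}(\mathcal{X}_p,\mathbb{Z})$. If that proves too delicate, the fallback is to show directly that $\mathcal{X}_p/(\mathcal{X}_f+\mathcal{X}_p[\mathrm{Ann}(I)])\cong\mathbb{Z}/S^{\mathcal X}$. One would then compute $h_p/i_p$ from the pairing restricted to $\mathcal{X}_f$, using that the pairing induces $\Phi_p(J^D(M))\cong\mathrm{coker}(\mathcal{X}_p\to\mathrm{Hom}(\mathcal{X}_p,\mathbb{Z}))$.
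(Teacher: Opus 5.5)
Your proposal is correct and is essentially the paper's argument. The paper takes the same black box, the Papikian--Rabinoff comparison of the denominators of $e$ in $\mathrm{End}(J^D(M))$ and in $\mathrm{End}(\mathcal{X}_p(J^D(M)))$, which gives $j_p=\delta^D(M)/r$; combined with $\delta^D(M)=(h_p/i_p)\,j_p$ this is exactly your Step~1 identification $h_p/i_p=r$. It then uses the injection $S^D(M)^{\mathcal X}/R^D(M)^{\mathcal X}\hookrightarrow \mathbb{T}^D(M)^{\mathcal X}/\mathbb{T}^D(M)$, the freeness hypothesis, and $R^D(M)^{\mathcal X}=R^D(M)$ just as in your Step~2.

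The only difference is at the end. The paper appeals to Proposition~3.4 a second time to get $R^D(M)_{\mathfrak m}=S^D(M)_{\mathfrak m}$, whereas you only need the elementary divisibility $\delta^D(M)\mid r^D(M)$, i.e.\ the sandwich $R^D(M)\subseteq S^D(M)\subseteq S^D(M)^{\mathcal X}$, which is slightly more economical.
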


\begin{proof}
    Let $n$ and $r$ denote the denominators of $e$ in $\mathrm{End}(J^D(M))$ and $\mathrm{End}(\mathcal{X}_p(J^D(M)))$, respectively. By definition, $n=\delta^D(M)$, and by replacing the rigid analytic lattice $\Lambda$ of \cite{papikian2016optimal} by $\mathcal{X}_p(J^D(M))$ and using Grothendieck's monodromy pairing~\cite[14.2.5]{grothendieck1972modeles} in place of the rigid analytic version used in $loc.~cit.$, one obtains formulas (3.7) and (3.9) of \cite[p.~1370]{papikian2016optimal}, which give
	\[
	j_p=\frac{n}{r}.
	\]
	
	Now let $\mathfrak{m}$ be the maximal ideal of $\mathbb{T}^D(M)$ of residue characteristic $l$ containing $I_f$. By assumption,
	\[
	\mathcal{X}_p(J^D(M))_{\mathfrak{m}}
	\cong
	\mathbb{T}^D(M)_{\mathfrak{m}}.
	\]
	Since $\mathrm{End}(\mathcal{X}_p(J^D(M)))$ acts faithfully on $\mathcal{X}_p(J^D(M))$ and
	\[
	\mathbb{T}^D(M)
	\subset
	\mathrm{End}(\mathcal{X}_p(J^D(M))),
	\]
	it follows that
	\[
	\mathbb{T}^D(M)^{\mathcal{X}}_{\mathfrak{m}}
	\cong
	\mathbb{T}^D(M)_{\mathfrak{m}}.
	\]
	By Lemma~9.6,
	\[
	R^{D}(M)^\mathcal{X}_{\mathfrak{m}}
	=
	S^{D}(M)^\mathcal{X}_{\mathfrak{m}}
	\]
	while Proposition~3.4 and the assumed freeness imply
	\[
	R^D(M)_{\mathfrak{m}}
	=
	S^D(M)_{\mathfrak{m}}.
	\]
	Remark~9.6 therefore yields
	\[
	S^{D}(M)^\mathcal{X}_{\mathfrak{m}}
	=
	R^{D}(M)^\mathcal{X}_{\mathfrak{m}}
	=
	R^D(M)_{\mathfrak{m}}
	=
	S^D(M)_{\mathfrak{m}}.
	\]
	
	By definition, the image of $e$ has order $r$ in
	\[
	\mathbb{T}^D(M)_{A^D(M)}/S^{D}(M)^\mathcal{X},
	\]
	and order $\delta^D(M)$ in
	\[
	\mathbb{T}^D(M)_{A^D(M)}/S^D(M).
	\]
	Since the two ideals agree after localization at $\mathfrak{m}$, we conclude that
	\[
	\operatorname{ord}_l(j_p)
	=
	\operatorname{ord}_l\!\left(\frac{\delta^D(M)}{r}\right)
	=
	0.
	\]
\end{proof}

%% file: Appendix.tex
\section{Appendix: Source Code for the Computations}

In this appendix, we provide the Sage and Magma codes used to perform the computations reported in Remark~3.2 and Sections~9.1 and~9.2.1. These implementations are adapted from the code appearing in the appendix of \cite{deines2014shimura}. All computations were carried out using Magma V2.23-10 and the SageMathCell.

\subsection{Magma Code for Monodromy Computations}

The following Magma code computes the quantities $h_p/i_p$ appearing in Section 9.2.1.

\begin{lstlisting}

// MonodromyFromAlphaBeta function
// =======================================================
function MonodromyFromAlphaBeta(D, M, p, E)

ZZ := Integers();
Zp := ideal<ZZ | p>;
Q := QuaternionAlgebra(D);
QM := MaximalOrder(Q);
OM := Order(QM, M);

// p-conjugate
function pConjugate(O, Ip)
Opp, phi, mpp := pMatrixRing(O, Ip);
alpha := Opp![[0,1],[mpp(Generator(Ip)),0]];
I := rideal<O | [alpha@@phi, Generator(Ip)]>;
return LeftOrder(I);
end function;

OMg := pConjugate(OM, Zp);
OMp := Order(OM, p);
OMps := OM meet OMg;
X := LeftIdealClasses(OM);
Y := LeftIdealClasses(OMg);
Z := LeftIdealClasses(OMps);

B := BrandtModule(OMps);
InnerProductMatrix(B);

BRI := [Conjugate(BI) : BI in Z];

BM := BrandtModule(OM);
BRIM := [Conjugate(BI) : BI in X];

BMg := BrandtModule(OMg);
BRIMg := [Conjugate(BI) : BI in Y];

function RightIdealClass(I, BRList)
for idx := 1 to #BRList do
if IsIsomorphic(BRList[idx], I) then
return BRList[idx], idx;
end if;
end for;
error "Right ideal not found!";
end function;

function Alpha(I)
I := rideal<OM | [b : b in Basis(I)]>;
return RightIdealClass(I, BRIM);
end function;

function Alphag(I)
I := rideal<OMg | [b : b in Basis(I)]>;
return RightIdealClass(I, BRIMg);
end function;

Mat := [];
for RI in BRI do
AI, ai := Alpha(RI);
BI, bi := Alphag(RI);
v := [0 : i in [1..#BRI]]; v[ai] := 1;
w := [0 : i in [1..#BRI]]; w[bi] := 1;
Append(~Mat, v cat w);
end for;

ABMat := Matrix(Mat);
ABBas := Basis(Kernel(ABMat));
BBas := [B!Eltseq(ab) : ab in ABBas];

coefs := [i : i in PrimesUpTo(100) | Integers()!(p*D*M) mod i ne 0];

n := Dimension(B);
V := Lattice(IdentityMatrix(Integers(), n));

KerIntersection := V;

for c in coefs do
T := HeckeOperator(B, c);
f := Newform(E);
coef := Coefficient(f, c);
TT := Evaluate(MinimalPolynomial(coef), T);
d := LCM([Denominator(x) : x in Eltseq(TT)]);
AZ := Matrix(Integers(), d*(TT));
Kmod := Kernel(AZ);                 // ModTupRng
Klat := sub< V | Basis(Kmod) >;     // Lat
KerIntersection := KerIntersection meet Klat;
end for;

if Dimension(KerIntersection) eq 0 then
error "Intersection of Hecke kernels is trivial!";
end if;

Ba := Basis(KerIntersection);
n := #Ba;
for i in [1..n] do
g := Ba[i];
S := [ Denominator(x) : x in Eltseq(g) ];
hh := LCM(S);
Ba[i] := Ba[i]*hh;
end for;

BaInt := [ B![Integers()!x : x in Eltseq(g)] : g in Ba ];

hMat := Matrix(n, n, [ InnerProduct(BaInt[i], BaInt[j]) : i in [1..n], j in [1..n] ]);
m := Dimension(Kernel(ABMat));
List := [];
for I in Subsets({1..m}, n) do
HH := Matrix(n, n, [ InnerProduct(BaInt[i], BBas[j]) : i in [1..n] , j in I ]);
if Determinant(HH) ne 0 then
Append(~List, Integers()!(Determinant(HH)));
end if;
end for;

hh := GCD(List);
return Determinant(hMat)/hh;
end function;

// Main loop
for N in [1..350] do

if not IsSquarefree(N) then
continue;
end if;

if #PrimeDivisors(N) lt 2 then
continue;
end if;

printf "N = %o\n", N;

CM := CuspForms(Gamma0(N));
fs := Newforms(CM);

for fam in fs do

try

E := ModularAbelianVariety(fam[1]);
dimE := Dimension(E);

printf "  dim(E) = %o\n", dimE;

for D in Divisors(N) do

if D eq 1 then
continue;
end if;

if (#PrimeDivisors(D) mod 2) ne 0 then
continue;
end if;

M := N div D;

printf "  D = %o, M = %o\n", D, M;

for p in PrimeDivisors(D) do

DD := D div p;

try

mon := MonodromyFromAlphaBeta(
DD,
M,
p,
E
);

printf
"N=%o D=%o p=%o DD=%o M=%o dim=%o mon=%o\n",
N, D, p, DD, M, dimE, mon;

catch err

printf
"ERROR: N=%o D=%o p=%o DD=%o M=%o\n",
N, D, p, DD, M;

print err;

end try;

end for;

end for;

catch err

printf "FAILED NEWFORM AT N=%o\n", N;
print err;

end try;

end for;

end for;
\end{lstlisting}

\subsection{Sage Code: Generalized $D$-Degeneracy Computations}

The following Sage code computes the modular invariants $\tilde{\delta}^1_D(N)$ and $\tilde{r}^1_D(N)$ associated to modular abelian varieties $A^1_D(N)$ (Section 9.1).

\begin{lstlisting}
def D_degcong(NN):

Dlist = NN.factors()

Q = J0(NN)

J = Q.modular_symbols(-1)

Jnew = J.new_submodule()

JD = J.decomposition()

for i in range(Jnew.dimension()):

A = Jnew[i]

Wnew = Q.new_subvariety()

#WDnew = Sym.abelian_variety()

#S = Sym.decomposition()

E = Wnew[i]

if E.dimension()=1:

print(E)

for p in Dlist:

Sym = J.new_submodule(p)

B = A.complement()

AB = B.intersection(Sym)

prec = max(A.hecke_bound(), B.hecke_bound())

V = A.q_expansion_module(prec, ZZ)

W = AB.q_expansion_module(prec, ZZ)

Z = Sym.q_expansion_module(prec, ZZ)

K = V + W

C = V.basis()

F = W.basis()

rows = []

# First basis elements from A

for x in C:

v = Z.coordinate_vector(x)

rows.append(list(v))

# Then basis elements from B

for x in F:

v = Z.coordinate_vector(x)

rows.append(list(v))

# Construct the matrix
V = matrix(ZZ, rows)

#abs_det = abs(V.determinant())

#print(V,abs_det)

S, U, D = V.smith_form()
  
# The diagonal of D contains the invariant factors

invariant_factors = S.diagonal()

exponent = max(invariant_factors)

print(DD, "-Congruence exponent =", exponent, "and", DD, "-Congruence number =", S.determinant())

Jn = Q

for p in DD.factor():

# Take sum of all new subvarieties at p except E

Jpnew = sum([jnew for jnew in Q.new_subvariety(p[0]).decomposition() if jnew != E])

if Jpnew == 0:

Jn = Q.zero_subvariety()

break

# Intersect with previous Jnew

Jn = Jpnew.intersection(Jn)[1]

print(DD, "-modular exponent =", Jn.intersection(E)[0].exponent(), DD, "-modular number =", Jn.intersection(E)[0].order())
\end{lstlisting}

\textbf{Acknowledgments.}
I am very grateful to Patrick Allen for many fruitful discussions and his continued encouragement. I also thank John Voight for his guidance with the Magma code, Amod Agashe for helpful conversations, and Mihran Papikian for useful comments.

For the preparation of this manuscript, I used the free version of ChatGPT (GPT-5.6) solely for grammar correction and stylistic editing. The mathematical ideas, arguments, and results are entirely my own, and I take full responsibility for the mathematical content of this paper.